\theoremstyle{plain}
\newtheorem{theorem}{Theorem} [section]
\newtheorem{lemma}[theorem]{Lemma}
\newtheorem{proposition}[theorem]{Proposition}
\theoremstyle{definition}
\newtheorem{definition}[theorem]{Definition}
\newtheorem{remark}[theorem]{Remark}
\numberwithin{theorem}{section}
\numberwithin{equation}{section}
\numberwithin{figure}{section}
\def\N{\mathbb N}
\def\R{\mathbb R}
\def\a{\alpha}
\def\b{\beta}
\def\d{\delta}
\def\e{\varepsilon}
\def\eps{\varepsilon}
\def\l{\lambda}
\def\var{\varphi}
\def\Om{\Omega}
\newcommand{\A}{\mathcal A}
\newcommand{\B}{\mathcal B}
\newcommand{\C}{\mathcal C}
\renewcommand{\H}{\mathcal H}
\newcommand{\K}{\mathcal K}
\renewcommand{\S}{\mathcal S}
\DeclareMathOperator{\dist}{dist}
\DeclareMathOperator{\co}{co}
\title{Quantitative stability 
for the Brunn-Minkowski inequality}
\author{Alessio Figalli\thanks{The University of Texas at Austin,
Mathematics Dept. RLM 8.100,
2515 Speedway Stop C1200,
Austin, TX 78712-1202 USA.\, \textit{E-mail address:} \texttt{figalli@math.utexas.edu}} \,\,and David Jerison\thanks{Department of Mathematics, 
Massachusetts Institute of Technology, 
77 Massachusetts Ave,
Cambridge, MA 02139-4307
USA. \, \textit{E-mail address:} \texttt{jerison@math.mit.edu}}}
\date{}
\begin{document}
\maketitle

\begin{abstract}
We prove a quantitative stability result for the 
Brunn-Minkowski inequality: if $|A|=|B|=1$, $t \in [\tau,1-\tau]$ with $\tau>0$, and
$|tA+(1-t)B|^{1/n}\leq 1+\delta$ for some small $\delta$,
then, up to a translation, both $A$ and $B$ are quantitatively close (in terms of $\delta$)
to a convex set $\K$.
\end{abstract}

\section{Introduction}

Given two sets $A,B\subset \R^n$, and  $c>0$, we define the 
set sum and scalar multiple by 
\begin{equation}
\label{eq:def sum}
A + B := \{a + b: a\in A, \ b\in B\}, \quad cA := \{ ca: a\in A\} 
\end{equation}
Let $|E|$ denote the Lebesgue measure of  a set $E$ (if $E$ is not measurable,
$|E|$ denotes the outer Lebesgue measure of $E$).
The Brunn-Minkowski inequality states that, given $A,B\subset \R^n$ nonempty measurable sets,
\begin{equation}
\label{eq:BM}
|A+B|^{1/n}\geq |A|^{1/n}+|B|^{1/n}.
\end{equation}
In addition, if $|A|,|B|>0$, then equality holds if and only if there exist
a convex set $\K\subset \R^n$, $\l_1,\l_1>0$, and $v_1,v_2 \in \R^n$, such that
$$
\l_1A+ v_1 \subset  \K ,\quad \l_2 B+ v_2 \subset  \K,
\qquad |\K \setminus ( \l_1A + v_1)| 
=  |\K \setminus (\l_2 B + v_2)|  = 0.
$$
Our aim is to investigate the stability of such a statement.\\

When $n=1$, the following sharp stability result holds as a consequence of
classical theorems in additive combinatorics (an elementary proof of this result
can be given using Kemperman's theorem \cite{christ3,christ4}):
\begin{theorem}
\label{thm:n1}
Let $A,B\subset \R$ be measurable sets.
If $|A+B|< |A|+|B|+\delta$ for some $\delta \leq \min\{|A|,|B|\}$,
then there exist two intervals $I,J\subset \R$ such that $A\subset I$, $B\subset J$, $|I\setminus A| \leq \delta$, and $|J\setminus B| \leq \delta$. 
\end{theorem}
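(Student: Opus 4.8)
The plan is to derive Theorem~\ref{thm:n1} from the one–dimensional Freiman ``$3k-4$'' phenomenon. The point is that the hypothesis $\delta\le\min\{|A|,|B|\}$ forces
\[
|A+B|\ <\ |A|+|B|+\min\{|A|,|B|\},
\]
i.e. we are in the ``small doubling'' regime in which near–equality in Brunn--Minkowski is rigid. First I would reduce, by the usual compact approximation, to the case $A,B$ compact (the hypothesis already forces $A,B$ bounded, and the approximating intervals stay in a fixed bounded set, so one passes back to the limit at the end, the condition ``$|I\setminus A|\le\delta$'' being closed). After a translation we may assume $\min A=\min B=0$; writing $\alpha=\max A$, $\beta=\max B$ we have $\co(A)=[0,\alpha]$, $\co(B)=[0,\beta]$ and $A+B\subset[0,\alpha+\beta]$. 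Since $I=\co(A)$, $J=\co(B)$ are the optimal choices, the desired conclusion is equivalent to $|\co(A)|\le|A|+\delta$ and $|\co(B)|\le|B|+\delta$; because $|A+B|<|A|+|B|+\delta$, it suffices to prove the sharp sumset bounds
\[
|\co(A)|+|B|\ \le\ |A+B|,\qquad |A|+|\co(B)|\ \le\ |A+B|.
\]

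These are the continuous analogue of the integer $3k-4$ theorem for the sum of two (possibly distinct) sets, and they genuinely need the doubling hypothesis: for $A=[0,1]\cup[10,11]$, $B=[0,1]$ both fail. The route I would take is discretisation: approximate $A$ and $B$ from inside by finite unions of intervals with endpoints in $\tfrac1N\Z$ and rescale by $N$ to obtain finite sets $\widehat A,\widehat B\subset\Z$ whose sumset is contained in — and, as $N\to\infty$, exhausts — the rescaled $A+B$, still with small doubling; then invoke the classical integer result (Freiman; Lev--Smeliansky; Stanchescu) that a finite $S\subset\Z$ with $|S+T|$ small lies in an arithmetic progression of length $|S+T|-|T|+1$, and undo the scaling. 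Alternatively one can argue directly on the gap structure: decompose $A$ into the clusters separated by its maximal gaps; if the largest gap of $A$ exceeded $|\co(B)|=\beta$, the translates $(\text{cluster of }A)+B$ would be pairwise disjoint inside $A+B$, forcing $|A+B|\ge|A|+2|B|\ge|A|+|B|+\min\{|A|,|B|\}$, a contradiction — so every gap of $A$ is short, and symmetrically for $B$; one then bootstraps to the sharp bound using the elementary identity $|E+[0,\ell]|=|E|+\ell+\sum_i\min\{\ell,g_i\}$ for the gap lengths $g_i$ of $E$, together with $\{0,\alpha\}\subset A$ and $\{0,\beta\}\subset B$.

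Granting the two sumset bounds, the theorem is immediate: $|\co(A)|\le|A+B|-|B|<|A|+\delta$ gives $|\co(A)\setminus A|<\delta$, symmetrically $|\co(B)\setminus B|<\delta$, so $I=\co(A)$, $J=\co(B)$ work; then one undoes the reduction to compact sets. The main obstacle is precisely the Freiman–type rigidity of the middle paragraph. Through discretisation the difficulty is that the sumset does not commute with passing to the lattice, so the approximation must be set up so that no measure is lost in the limit — and this is exactly what is needed to get the \emph{sharp} constant $\delta$ rather than $\delta+o(1)$. Through the direct route the delicate point is the case in which the cluster–translates overlap, which forces a somewhat intricate induction on the number of gaps. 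Everything else — the regularisation, the reduction to the two sumset inequalities, and the final deduction — is routine.
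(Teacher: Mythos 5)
The paper does not actually prove Theorem~\ref{thm:n1}: it states that the result follows from classical theorems in additive combinatorics, noting that an elementary proof can be given via Kemperman's theorem and citing Christ~\cite{christ3,christ4}. Your proposal is essentially a reconstruction of that elementary proof. The reduction you make---that after passing to compact sets it suffices to establish the two sharp sumset bounds $|\co(A)|+|B|\le|A+B|$ and $|A|+|\co(B)|\le|A+B|$ under the small-doubling hypothesis $|A+B|<|A|+|B|+\min\{|A|,|B|\}$---is correct, and that pair of inequalities is indeed the continuous analogue of the Freiman/Lev--Smeliansky ``$3k-4$'' theorem (equivalently obtainable from Kemperman's theorem on $\T$). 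Your gap-splitting observation (if $A$ has a gap longer than $\beta=|\co(B)|$, the translates of $B$ by the two halves of $A$ are disjoint and force $|A+B|\ge|A|+2|B|$, contradicting $\delta\le|B|$) is the right first step of the direct proof, and discretization to $\Z$ plus the integer $3k-4$ theorem is the other standard route. Two points you gesture at but should make precise: the hypothesis does force $A$ and $B$ bounded, but this itself needs a gap-splitting argument applied to points far from the essential bulk (each such point contributes a near-disjoint copy of $\co(B)$ inside $A+B$); and in the compact inner approximation one must augment $A_k$ by a finite set controlling the hull, exactly as in the paper's footnote preceding Section~\ref{sec:proof}, so that $\co(A_k)\to\co(A)$ and the interval $I$ passes to the limit without losing the constant $\delta$. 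These are bookkeeping rather than conceptual gaps; the approach is sound and is precisely what the paper outsources to the literature.
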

Concerning  the higher dimensional case, in
\cite{christ1,christ2} M. Christ proved a \textit{qualitative} stability result 
for \eqref{eq:BM}, namely, if
$|A+B|^{1/n}$ is close to $|A|^{1/n}+|B|^{1/n}$ then $A$ and $B$
are close to homothetic convex sets.  

On the \textit{quantitative} side,
first V. I. Diskant \cite{Disk} and then H. Groemer \cite{Groe}
obtained some stability results for {\em convex sets} in terms of the Hausdorff distance.
More recently, sharp stability results in terms of the $L^1$ distance have been obtained by 
the first author together with F. Maggi and A. Pratelli \cite{fmpK,fmpBM}. Since this latter result
will play a role in our proofs, we state it in detail.

We begin by noticing that, after dilating $A$ and $B$ appropriately,  we can assume $|A|=|B|=1$ while replacing the sum $A+B$ by a convex combination $S:=tA+(1-t)B$.
It follows by \eqref{eq:BM} that $|S|= 1+\delta$ for some $\delta \geq 0$.
\begin{theorem}\label{thm:BMconvex} (see \cite{fmpK,fmpBM})
There is a computable dimensional
constant $C_0(n)$ such that if $A,B\subset \R^n$ are convex sets satisfying $|A| = |B| = 1$, $|tA + (1-t)B| = 1 +\delta$ for some $t\in [\tau, 1-\tau]$, then, up to a translation,
\[
| A \Delta B| \le C_0(n) \tau^{-1/2n} \delta^{1/2}
\]
\end{theorem}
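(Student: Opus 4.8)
The plan is first to dispose of the easy regime and to normalise by an affine map, then to run an induction on the dimension based on slicing, using Theorem~\ref{thm:n1} on the one-dimensional marginals and the inductive hypothesis on the slices, and finally to glue the slice-wise rigidity into a single global translation. Since $|A\,\Delta\,B|\le|A|+|B|=2$, the asserted estimate is automatic unless $C_0(n)\,\tau^{-1/2n}\delta^{1/2}<2$, so we may assume throughout that $\delta\le\delta_0(n,\tau)$ is as small as we wish. The deficit $\delta$ is unchanged if we replace $(A,B)$ by $(TA,TB)$ with $|\det T|=1$, and if we translate $A$ and $B$ separately, because $tTA+(1-t)TB=T\bigl(tA+(1-t)B\bigr)$ while $|TA|=|TB|=1$. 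Choosing $T$ so that the John ellipsoid of $A$ becomes a ball, and translating, we may assume that $A$ contains a ball of radius $\rho$ and is contained in a ball of radius $R$ with $\rho,R$ dimensional. A short argument upgrades this to $B$: if $B$ had a long diameter segment $\sigma$, then the capsule $tA+(1-t)\sigma\subseteq tA+(1-t)B$ would have volume of order $\tau^{n}\,\mathrm{diam}(B)$, incompatible with $|tA+(1-t)B|=1+\delta\le2$; hence, after translating $B$, it too lies between concentric balls whose radii depend only on $n$ and $\tau$. From here on $A$ and $B$ are convex bodies trapped between concentric balls of comparable radii.

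Next I would argue by induction on $n$, the case $n=1$ being immediate since two intervals of equal length coincide after a translation (and for intervals $\delta=0$). For the inductive step, fix a coordinate direction $e$ — chosen, using Step~1, so that the central slices of $S:=tA+(1-t)B$ carry a definite fraction of its volume — and for $s\in\R$ let $a(s)=\mathcal H^{n-1}(A\cap\{x\cdot e=s\})$, and likewise $b(s)$, $c(s)$, so that $\int a=\int b=1$ and $\int c=1+\delta$ by Fubini. The inclusion $S\cap\{x\cdot e=ts+(1-t)s'\}\supseteq t\bigl(A\cap\{x\cdot e=s\}\bigr)+(1-t)\bigl(B\cap\{x\cdot e=s'\}\bigr)$, together with the $(n-1)$-dimensional Brunn--Minkowski inequality, gives
\[
c\bigl(ts+(1-t)s'\bigr)^{1/(n-1)}\ \ge\ t\,a(s)^{1/(n-1)}+(1-t)\,b(s')^{1/(n-1)}\qquad\text{for all }s,s'.
\]
Coupling $s$ with $s'=\phi(s)$ through the monotone transport map $\phi$ between the densities $a$ and $b$, changing variables by $z=ts+(1-t)\phi(s)$ in $\int c\,dz=1+\delta$, and bounding the resulting integrand from below, one extracts a single inequality of the shape
\[
\int_{\R}\bigl[\,m(s)+\delta(s)\,\bigr]\,ds\ \lesssim\ \delta ,
\]
where $m(s)\ge0$ measures the discrepancy between $b(\phi(s))$ and $a(s)$, and $\delta(s)\ge0$ is the $(n-1)$-dimensional Brunn--Minkowski deficit of the unit-volume rescalings of $A\cap\{x\cdot e=s\}$ and $B\cap\{x\cdot e=\phi(s)\}$. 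Smallness of $\int m$ forces, via Theorem~\ref{thm:n1} applied to the one-dimensional marginals $a$ and $b$, that these are $L^1$-close after a translation in the $e$-direction, with error $O_{n,\tau}(\delta^{1/2})$; smallness of $\int\delta(s)\,ds$ lets us apply the inductive hypothesis in dimension $n-1$ to the slices, yielding $v(s)\in e^{\perp}$ with $\mathcal H^{n-1}\bigl((A\cap\{x\cdot e=s\})\,\Delta\,((B\cap\{x\cdot e=\phi(s)\})+v(s))\bigr)\lesssim\tau^{-1/2(n-1)}\,\delta(s)^{1/2}$.

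Finally I would replace the slice-dependent translations $v(s)$ by one global vector. This is where convexity is essential: the barycentres of the slices of $A$ and of $B$ trace essentially Lipschitz (indeed convex) curves over the bulk of the mass, and the integrated slice-deficit bound pins $v(s)$ to a single $v$ up to an $L^1$-error of size $O_{n,\tau}(\delta^{1/2})$ — a rigidity/connectedness argument of the same type needed to characterise the equality cases. Integrating the slice estimates in $s$ and applying Cauchy--Schwarz, $\int\delta(s)^{1/2}\,ds\le\bigl(|\,\mathrm{supp}\,c\,|\bigr)^{1/2}\bigl(\int\delta(s)\,ds\bigr)^{1/2}\lesssim_{n,\tau}\delta^{1/2}$ — using the \emph{integrated} deficit rather than a pointwise one is precisely what preserves the sharp exponent $\tfrac12$ — and combining with the marginal estimate gives $|A\,\Delta\,(B+v+\lambda e)|\lesssim_{n,\tau}\delta^{1/2}$ for a suitable $\lambda\in\R$; tracking the power of $\tau$ picked up at each of the $n$ levels of the induction then produces the constant $C_0(n)\tau^{-1/2n}$. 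The main obstacle is exactly this gluing step: turning the slice-wise rigidity into one honest translation without degrading the $\delta^{1/2}$ rate, since a careless induction readily yields a non-sharp exponent such as $\delta^{1/n}$. (For convex bodies one might instead expand $|tA+(1-t)B|=\sum_{k=0}^n\binom nk t^k(1-t)^{n-k}V(A[k],B[n-k])$, use $V(A[k],B[n-k])\ge1$ from the Aleksandrov--Fenchel inequalities to get each mixed volume $\le 1+O(\tau^{-n}\delta)$, and invoke a Diskant/Groemer-type stability estimate for Minkowski's first inequality; this route is lighter but gives worse powers of $\delta$ and $\tau$, so the slicing argument seems unavoidable for the sharp statement.)
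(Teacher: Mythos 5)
The statement you are proving is not established in this paper: it is quoted from Figalli--Maggi--Pratelli \cite{fmpK,fmpBM}, whose proof is a mass-transportation argument in the spirit of Gromov's proof of Brunn--Minkowski. One constructs the Brenier (or Knothe) map $\nabla\varphi$ pushing $A$ onto $B$, uses the arithmetic--geometric mean inequality on the eigenvalues of $D^2\varphi$ to express the deficit $\delta$ as a lower bound for an integral of $\|D^2\varphi-\mathrm{Id}\|^2$-type quantities, and then converts this information into an $L^1$ bound on $A\Delta B$ via a Poincar\'e/trace inequality on convex bodies. There is no dimensional induction and no slicing in that proof; the sharp exponent $1/2$ falls out in one pass. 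Your route is genuinely different: it is closer in spirit to Christ's qualitative argument and to the slicing induction that \emph{this} paper runs for the general (non-convex) Theorem~\ref{thm:main}, and it has the advantage of reusing the same machinery; its disadvantage is exactly that it must confront the gluing problem, which the FMP route sidesteps entirely.

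Within your sketch there are two concrete gaps. First, Theorem~\ref{thm:n1} is a statement about subsets of $\R$, not about density functions, so you cannot apply it to the marginals $a$ and $b$ directly; what the inductive proof of Brunn--Minkowski actually gives (cf.\ Lemma~\ref{lemma:BM n n-1}) is an $L^1$ bound on $|T'-1|$ for the monotone transport map $T$ between $a$ and $b$, from which closeness of $a$ and $b$ up to a shift can be extracted, but you need to argue this rather than invoke Theorem~\ref{thm:n1}. Second, and more seriously, the gluing of the slice translations $v(s)$ into a single global vector is left as a vague appeal to ``rigidity/connectedness''. The inductive hypothesis controls only the Brunn--Minkowski \emph{deficit} of each slice pair and says nothing by itself about how $v(s)$ varies with $s$; in the basic example of a small shear the optimal $v(s)$ is genuinely non-constant and varies by an amount comparable to $\delta^{1/2}$, so any correct argument must quantitatively bound $\int |v(s)-\bar v|\,ds$ by $\delta^{1/2}$. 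In this paper the analogous control is obtained through the explicit four-point barycenter inequality and Lemma~\ref{lemma:1d}, and there it produces only a small \emph{power} of $\delta$, not the sharp $\delta^{1/2}$. You have not supplied a mechanism that would do better in the convex case, and ``the same type of argument as for the equality cases'' is only a qualitative statement. Until that step is made precise the proof is not complete, and it is exactly the step you yourself flag as the main obstacle.
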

\noindent
(Here and in the sequel, $E\Delta F$ denotes the symmetric difference between two sets $E$ and $F$,
that is $E\Delta F=(E\setminus F)\cup(F\setminus E)$.)\\

Our main theorem here is a quantitative version of Christ's result.   His
result relies on compactness and, for that reason, does not yield any 
explicit information about the dependence on the parameter $\delta$.   
Since our proof is by induction on the dimension, it will be convenient to 
allow the measures of $|A|$ and $|B|$ not to be exactly equal, but just close in 
terms of $\delta$.  Here is the main result of this paper, which shows that the measure of the difference between the sets $A$ and $B$ 
and their convex hull is bounded by a power $\delta^\epsilon$, confirming
a conjecture of Christ \cite{christ1}.   

\begin{theorem}
\label{thm:main}
Let $n \geq 2$, let $A,B\subset \R^n$ be measurable sets, and define 
$S:=tA+(1-t)B$ for some $t \in [\tau,1-\tau]$, $0 < \tau \le 1/2$.
There are computable dimensional constants $N_n$ and 
computable functions $M_n(\tau),\e_n(\tau)>0$ such that if
\begin{equation}
\label{eq:measures}
\bigl||A|-1\bigr|+\bigl||B|-1\bigr|+\bigl||S|-1\bigr|\leq \d
\end{equation}
for some $\d\leq e^{-M_n(\tau)}$, then there exists a convex set 
$\K\subset \R^n$ such that, up to a translation, 
\[
A,B\subset \K\qquad \text{and}\qquad 
|\K\setminus A|+|\K\setminus B|\leq \tau^{-N_n} \d^{\e_n(\tau)}.
\]
%In particular, if $\d < e^{-M_n(\tau)}$, then 
%$|\K|$ is bounded by a dimensional constant.  
Explicitly, we may take
$$
M_{n}(\tau) = \frac{2^{3^{n+2}} n^{3^n} |\log \tau|^{3^n}}{\tau^{3^n}},
\qquad
\e_{n}(\tau) = \frac{\tau^{3^n}}{2^{3^{n+1}} n^{3^n} |\log \tau|^{3^n}}.
$$
\end{theorem}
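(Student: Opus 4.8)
The plan is to prove Theorem \ref{thm:main} by induction on the dimension $n$, using Theorem \ref{thm:n1} as the base case $n=1$ (after the trivial rescaling to normalize measures) and reducing the $n$-dimensional statement to the $(n-1)$-dimensional one by slicing. The overall strategy follows Christ's qualitative argument, but every compactness step must be replaced by a quantitative one. The first step is a preliminary normalization: using the Brunn--Minkowski inequality \eqref{eq:BM} together with \eqref{eq:measures}, one shows that $|A|$, $|B|$, and $|S|$ are all within $O(\delta)$ of $1$, and that up to an affine volume-preserving change of coordinates we may assume $A$ and $B$ are in a ``well-spread'' position (e.g.\ John position, or bounded in one direction by something polynomial in the data). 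The point of this step is to get a priori geometric control so that the slicing argument does not degenerate.

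Next I would set up the slicing. Fix a direction, say $e_n$, write $A_s = \{x' : (x',s) \in A\}$, similarly $B_s$, and let $\mathbf{a}(s)=|A_s|$, $\mathbf{b}(s)=|B_s|$ be the $(n-1)$-dimensional slice volumes. Brunn--Minkowski in dimension $n-1$ gives $|S_{ts_1+(1-t)s_2}|^{1/(n-1)} \ge t\,\mathbf{a}(s_1)^{1/(n-1)} + (1-t)\,\mathbf{b}(s_2)^{1/(n-1)}$, and integrating (via the one-dimensional Prékopa/Brunn--Minkowski argument, or the Borell--Brascamp--Lieb inequality) one sees that the near-equality hypothesis $|S| \le 1+\delta$ forces two things: (i) for ``most'' pairs of slices $s_1, s_2$ matched by the convex-combination map, the $(n-1)$-dimensional deficit $|S_{ts_1+(1-t)s_2}|^{1/(n-1)} - t\,\mathbf{a}(s_1)^{1/(n-1)} - (1-t)\,\mathbf{b}(s_2)^{1/(n-1)}$ is small (on average, hence small for a large-measure set of slices by Chebyshev), and (ii) the functions $\mathbf{a}$ and $\mathbf{b}$ are themselves close, after matching, to a common log-concave-in-the-Brunn--Minkowski-sense profile — concretely their $1/(n-1)$-powers are close to affinely related concave functions supported on nearly the same interval. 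Applying the inductive hypothesis to the good slices yields a convex set $\K_s \subset \R^{n-1}$ for each good $s$ with $A_s, B_s$ close to $\K_s$; applying Theorem \ref{thm:n1} to the slice-volume functions controls how these fit together in the $e_n$ direction.

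The main obstacle — and the technically heaviest part — is the last step: upgrading the ``slice-by-slice'' closeness to genuine closeness of $A$ (and $B$) to a single convex body $\K \subset \R^n$. The slices $\K_s$ are only controlled for $s$ in a large-measure set, the translations needed to align $A_s$ with $B_s$ a priori depend measurably on $s$, and one must show these translations are (close to) affine in $s$ so that the $\K_s$ assemble into a convex body rather than a sheared mess. This requires a quantitative rigidity argument for the function $s \mapsto (\text{barycenter/position of } A_s)$: the near-equality in the integrated Brunn--Minkowski inequality controls not just each slice but the ``transport cost'' of moving slices of $A$ to slices of $S$, and a quantitative version of the equality analysis (in the spirit of Theorem \ref{thm:BMconvex}, applied to convexified slices, plus a one-dimensional stability statement controlling the positions) pins the barycenters to an affine function up to an error that is again a power of $\delta$. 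One then takes $\K$ to be (a small enlargement of) the convex hull of $A \cup B$ after the aligning affine map, verifies $A, B \subset \K$ by construction, and bounds $|\K \setminus A| + |\K \setminus B|$ by splitting into the contribution of good slices (controlled by induction) and bad slices (controlled by the smallness of the bad set times the a priori diameter bound from the normalization step). Tracking constants through the induction — each dimensional step roughly cubes the relevant exponents because one loses a square root from Theorem \ref{thm:BMconvex} and another factor from the Chebyshev/bad-slice trade-off, and these compound multiplicatively across $n$ levels — is exactly what produces the stated towers $M_n(\tau) \sim 2^{3^{n+2}} n^{3^n}|\log\tau|^{3^n}\tau^{-3^n}$ and $\e_n(\tau) \sim \tau^{3^n} 2^{-3^{n+1}} n^{-3^n} |\log\tau|^{-3^n}$, so the bookkeeping, while not deep, must be done with care to close the induction with explicit functions.
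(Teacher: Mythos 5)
Your high-level architecture matches the paper's: induction on dimension with Theorem \ref{thm:n1} as the base, a slicing reduction that compares the deficit in dimension $n$ to the deficits in the $(n-1)$-dimensional slices via a transport-type integration (this is exactly the paper's Lemma \ref{lemma:BM n n-1}), a rigidity argument pinning the slice barycenters to an affine function, and Proposition-type glue to pass from ``$A$ and $B$ close to convex sets'' to a single containing convex body $\K$. Those pieces are all present in the paper. However, there are two genuine gaps in your proposal that would prevent it from closing.

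First, you have no analogue of the symmetrization reduction. The paper spends all of Section \ref{sect:natural} proving the theorem for $A=A^\natural$, $B=B^\natural$ (Schwarz $\circ$ Steiner), and Step~1 of the general case hinges entirely on this: applying the $\natural$-case result to $A^\natural,B^\natural$ is how one shows the fiber-length functions $y\mapsto\H^1(A_y)$ and $y\mapsto\H^1(B_y)$ are almost equipartitioned, which in turn yields a good level $\bar\lambda$ so that the induction can be applied \emph{to the projected level sets} $\A(\bar\lambda),\B(\bar\lambda)\subset\R^{n-1}$. In other words, the paper applies the inductive hypothesis twice, once to projected superlevel sets (to get a common convex base $\Omega$) and once to horizontal slices of auxiliary sets (your Step); your proposal only contemplates the second, and without the first you have no way to produce a common $(n-1)$-dimensional convex body onto which both $A$ and $B$ essentially project, which is what makes the rest of the argument non-vacuous (e.g.\ you cannot even apply Theorem \ref{thm:n1} fiberwise to $A_y,B_y$ unless you first know both are nonempty for the same $y$'s, which is the whole point of Step 1--2).

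Second, your assertion that the near-equality hypothesis forces the slice-volume profiles (or the top/bottom profile functions $a^A,b^A$) to be close to concave is exactly what needs proving, and it is by far the hardest part of the paper. The paper's route is: derive a ``3-point concavity inequality'' for the profiles on a large subset $F$ from the bound $|S\setminus\bar S|\le C\delta^{1/2}$, convert it via the elementary Remark \ref{rmk:4pts} into a ``4-point'' inequality for each profile separately, and then invoke the long and delicate Lemma \ref{lemma:concavity}, whose hypotheses (not just 4-point almost-concavity but also that most superlevel sets of the profile are close to their convex hulls) are supplied precisely by the two applications of the inductive hypothesis just described, together with the auxiliary flat-bottomed sets $A^-,B^-$ constructed to make the superlevel sets of $b^A$ match the horizontal slices. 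Your proposal treats this as a one-line consequence of near-equality plus Theorem \ref{thm:n1} applied to slice volumes, but Theorem \ref{thm:n1} is a statement about sumsets of sets in $\R$, not about concavity of functions, and does not give you what you claim. Similarly, your ``quantitative rigidity of barycenters'' is vaguely attributed to Theorem \ref{thm:BMconvex}; in the paper it is a standalone one-dimensional combinatorial statement (Lemma \ref{lemma:1d}) derived from the 4-point inequality, not from the convex-body stability theorem. These are not bookkeeping issues: without the concavity lemma and the 4-point machinery, the proposal does not have a mechanism for producing the concave functions $\Psi^\pm$ from which the convex bodies $K_A,K_B$ are built.
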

\noindent

It is interesting to make some comments on the above theorem:
first of all, notice that the result holds only under the assumption that $\delta$ is sufficiently small,
namely $\d\leq e^{-M_n(\tau)}$. A smallness assumption on $\delta$ is actually necessary, as can be easily seen from the following
example:
$$
A=B:=B_\rho(0)\cup \{2L \,e_1\},
$$
where $L \gg 1$,  $e_1$ denotes the first vector of the canonical basis in $\R^n$, and $\rho>0$ is chosen so that $|B_\rho(0)|=1$.
Then it is easily checked that
$$
\left|\textstyle{\frac12}A+\textstyle{\frac12}B\right|=\left| B_\rho(0)\cup B_{\rho/2}(L \,e_1) \cup \{2L \,e_1\}\right|=1+2^{-n},
$$
while
$|\co(A)| \approx L$ can be arbitrarily large, hence the result is false unless we assume that $\delta<2^{-n}$.

Concerning the exponent $\eps_n(\tau)$, at the moment it is unclear to us whether a dimensional dependency is necessary. It is however worth to point out that there are stability results for functional inequalities
where a dimensional dependent exponent is needed (see for instance \cite[Theorem 3.5]{BP}),
so it would not be completely surprising if in this situation the optimal exponent does depend on $n$.
We plan to investigate this very delicate question in future works.

Another important direction to develop would be to understand the analytic counterpart of the Brunn-Minkowski
inequality, namely the Pr\'ekopa-Leindler inequality. At the moment, some stability estimates are known only in one dimension
or for some special class of functions \cite{BB1,BB2},
and a general stability result would be an important direction of future investigations.
\\

%{\color{blue}
The paper is structured as follows.  In the next section we introduce a few notations
and give an outline of the proof along with some commentary on 
the techniques and ideas.  Then, in Section \ref{sect:tech results} we collect 
most of the technical results we will use.  Since the proofs of some of these
technical results are delicate and involved, we postpone them to Section 
\ref{sect:proofs}.  Section \ref{sec:proof} is 
devoted to the proof of Theorem \ref{thm:main}.\\
%end color blue}

\textit{Acknowledgements:}
AF was partially supported by NSF Grant DMS-1262411.
DJ was partially supported by the Bergman Trust and NSF Grant DMS-1069225.

\section{Notation and an outline of the proof}
\label{sect:outline}

Let $\H^k$ denote $k$-dimensional Hausdorff measure on $\R^n$.  
Denote by $x=(y,s) \in \R^{n-1}\times \R$ a point in $\R^n$,
and let $\pi:\R^n \to \R^{n-1}$ and $\bar \pi:\R^n\to \R$ denote the canonical projections, i.e.,
$$
\pi(y,s):= y\qquad\text{and}\qquad \bar\pi(y,s):=s.
$$
Given a compact set $E\subset \R^n$, $y \in \R^{n-1}$, and $\lambda>0$, we use the notation 
\begin{equation}
\label{eq:Ey}
E_y := E\cap \pi^{-1}(y)\subset \{y\}\times\R,\qquad E(s):=E\cap \bar\pi^{-1}(s)\subset \R^{n-1}\times\{s\},
\end{equation}
\begin{equation}
\label{eq:Elambda}
\mathcal E(\lambda):=\bigl\{y \in \R^{n-1}:\H^1(E_y)>\lambda\bigr\}.
\end{equation}
Following Christ \cite{christ2}, we consider different symmetrizations.
%new modified back to standard definition.
%For our purposes (see the proof of Proposition \ref{prop:coS}), it is convenient 
%to use a definition of Schwarz symmetrization slightly different from the classical % one. (In the usual definition of Schwarz symmetrization
%$E^\ast(s)=\emptyset$ whenever $\H^{d-1}\bigl(E(s)\bigr)=0$.)

\begin{definition}
\label{def:symm}
Let $E\subset \R^n$ be a compact set.
We define the \textit{Schwarz symmetrization} $E^\ast$ of $E$ as follows. 
For each $t \in \R$,
\begin{enumerate}
\item[-] If $\H^{n-1}\bigl(E(s)\bigr)>0$, then
$E^\ast(s)$ is the closed disk centered at $0 \in \R^{n-1}$
with the same measure.
\item[-] If $\H^{n-1}\bigl(E(s)\bigr)=0$, then $E^\ast(s)$ is empty.
%\item[-] If $\H^{d-1}\bigl(E(s)\bigr)=0$,  but $E(s)$ is non-empty, 
%then $E^\ast(s)=\{0\}$.
%\item[-] If $E(s)$ is empty,  then 
%$E^\ast(s)$ is empty as well.
\end{enumerate}
We define the \textit{Steiner symmetrization} $E^\star$ of $E$ 
so that for each $y \in \R^{n-1}$, the set $E^\star_y$ is empty if $\H^1(E_y)=0$;
otherwise it is the closed interval of length $\H^1(E_y)$ centered at $0\in \R$.
Finally, we define $E^\natural:=(E^\star)^\ast$.
\end{definition}

\subsection*{Outline of the proof of Theorem \ref{thm:main}}
%{\color{blue}

The proof of Theorem \ref{thm:main} is very elaborate, combining 
the techniques of M. Christ with those developed by the present authors 
in \cite{fjA} (where we proved Theorem \ref{thm:main} in the special case $A=B$ 
and $t=1/2$), as well as several new ideas.  For that reason, we give
detailed description of the argument. \\

In Section \ref{sect:natural} we prove the theorem
in the special case $A=A^\natural$ and $B=B^\natural$.
In this case we have that
$$
A_y =\{y\}\times [-a(y),a(y)] \quad \text{and}\quad B_y =\{y\}\times[-b(y),b(y)],
$$
for some functions $a,b:\R^{n-1}\to \R^+$,
and it is easy to show that $a$ and $b$ satisfy the ``3-point concavity inequality''
\begin{equation}
\label{eq:3pts}
ta(y')+(1-t)b(y'') \leq [ta+(1-t)b](y)+\delta^{1/4}
\end{equation}
whenever $y'$, $y''$, and $y:=ty'+(1-t)y''$ belong to a large subset 
$F$ of $\pi(A)\cap \pi(B)$.
From this $3$-point inequality and an elementary argument (Remark \ref{rmk:4pts})
we show that $a$ satisfies the ``4-point concavity inequality'' 
\begin{equation}
\label{eq:4pts ineq}
a(y_1)+a(y_2)\leq a(y_{12}')+a(y_{12}'')+\frac2t \delta^{1/4}
\end{equation}
with $y_{12}':=t'y_1+(1-t')y_2$, $y_{12}'':=t''y_1+(1-t'')y_2$, 
$t':=\frac{1}{2-t}$, $t'':=1-t'$, provided
all four points belong to $F$.  (The analogous inequality
for $b$ involves a different set of four points.)

Using this inequality and Lemma \ref{lemma:concavity}, we deduce
that $a$ is quantitatively close in $L^1$ to a concave function. 
The proof, in Section \ref{sect:proofs}, of Lemma \ref{lemma:concavity},
although reminiscent of Step 4 in the proof of \cite[Theorem 1.2]{fjA}, 
is delicate and involved. 

Once we know that $a$ (and analogously $b$) is $L^1$-close 
to a concave function,
we deduce that both $A$ and $B$ are $L^1$-close to convex sets $K_A$ and $K_B$
respectively, and
we would like to say that these convex sets are nearly the same.
This is demonstrated as part of Proposition \ref{prop:coS}, which is proved by
first showing that $S$ is close to $tK_A+(1-t)K_B$,
then applying Theorem \ref{thm:BMconvex} to deduce that
$K_A$ and $K_B$ are almost homothetic, and then constructing
a convex set $\K$ close to $A$ and $B$ and containing both of them.

This concludes the proof of Theorem 
\ref{thm:main} in the case $A=A^\natural$ and $B=B^\natural$.\\

In Section \ref{sect:general proof}
we consider the general case, which we prove in several steps, culminating
in induction on dimension. \\

{\it Step 1.} This first step is very close to the argument used by M. Christ in \cite{christ2},
although our analysis is more elaborate since we have to quantify every estimate.

Given $A$, $B$, and $S$, as in the theorem, we consider 
their symmetrizations 
$A^\natural$, $B^\natural$, and $S^\natural$,
and apply the result from Section \ref{sect:natural} to deduce that $A^\natural$ and $B^\natural$
are close to the same convex set.
This information combined with Christ's Lemma \ref{lem:symm} allows us to deduce that functions $y\mapsto \H^1(A_y)$ and $y\mapsto \H^1(B_y)$ are almost equipartitioned (that is, the measure
of their level sets $\A(\lambda)$ and $\B(\lambda)$ are very close). This fact combined with a Fubini argument
yields  that, for most levels $\lambda$,  $\A(\lambda)$ and $\B(\lambda)$ 
are almost optimal for the $(n-1)$-dimensional Brunn-Minkowski inequality.
Thus, by the inductive step, we can find a level $\bar\lambda \sim \d^\zeta$ ($\zeta>0$) such that we can apply the inductive hypothesis to $\A(\bar\lambda)$ and $\B(\bar\lambda)$.  Consequently, after removing sets of 
small measure both from $A$ and $B$ and translating in $y$, we deduce that 
$\pi(A),\pi(B)\subset \R^{n-1}$ are close to the same convex set.\\

{\it Step 2.} This step is elementary: we apply a Fubini argument and
Theorem \ref{thm:n1} to most of the sets $A_y$ and $B_y$ for $y \in
\A(\bar\lambda)\cap \B(\bar\lambda)$ to deduce
that they are close to their convex hulls. Note, however, that
to apply Fubini and Theorem \ref{thm:n1} it is crucial that, thanks to Step 1, 
we found a set in $\R^{n-1}$ onto which both $A$ and $B$ project almost fully.  
Indeed, in order to say
that $\H^1(A_y+B_y)\geq \H^1(A_y) +\H^1(B_y)$ it is necessary to know
that both $A_y$ and $B_y$ are \emph{nonempty}, as otherwise the
inequality would be false!\\

{\it Step 3.} The argument here uses several ideas from our previous paper \cite{fjA}
to obtain a  3-point concavity inequality as in \eqref{eq:3pts} above 
for the ``upper profile'' of $A$ and $B$ 
(and an analogous inequality for the ``lower profile'').
This inequality allows us to say that the barycenter of $A_y$ 
satisfies the 4-point inequality \eqref{eq:4pts ineq} both from above and 
from below, and from this information we can deduce that,
as a function of $y$, the barycenter of $A_y$ (resp. $B_y$) 
is at bounded distance from a linear function (see Lemma \ref{lemma:1d}).
It follows that the barycenters of $\bar S_y$ are a bounded distance
from a linear function for a set $\bar S$ which is almost of full
measure inside $S$.  Then a variation of \cite[Proof of Theorem 1.2, Step 3]{fjA}
allows us to show that, after an affine measure preserving transformation, 
$\bar S$ is universally bounded, that is, bounded in diameter by 
a constant of the form $C_n\tau^{-M_n}$ where $C_n$ and $M_n$ are dimensional
constants. 
\\

{\it Step 4.} By a relatively easy argument we find sets $A^\sim$ and $B^\sim$
of the form
\[
A^\sim = \bigcup_{y\in F} \{y\}\times [a^A(y), b^A(y)]
\qquad
B^\sim = \bigcup_{y\in F} \{y\}\times [a^B(y), b^B(y)]
\]
which are close to $A$ and $B$, respectively, and are universally bounded.\\

{\it Step 5.} This is a crucial step: we want to show that $A^\sim$ and $B^\sim$ 
are close to convex sets.
As in the case $A=A^\natural$ and $B=B^\natural$,
we would like to apply Lemma \ref{lemma:concavity} to deduce that $b^A$ and $b^B$
(resp. $a^A$ and $a^B$) are $L^1$-close to concave (resp. convex) functions.

The main issue is that the hypothesis of the lemma, in addition to asking
for boundedness and  concavity of $b^A$ and $b^B$ at most points, also 
requires that the level sets of $b^A$ and $b^B$ be close to their convex hulls.
To deduce this  we wish to show that most slices of $A^\sim$ and $B^\sim$
are nearly optimal in the Brunn-Minkowski inequality in dimension
$n-1$ and invoke the inductive hypothesis.  We achieve
this by an inductive proof of the Brunn-Minkowski 
inequality, based on combining
the validity of Brunn-Minkowski in dimension $n-1$ with $1$-dimensional 
optimal transport (see Lemma \ref{lemma:BM n n-1}).

An examination of this proof of the Brunn-Minkowski inequality
in the situation near equality shows that if $A$ and $B$ are 
almost optimal for the Brunn-Minkowski inequality in dimension $n$,
then for most levels $s$, the slices $A(s)$ and $B(T(s))$ have
comparable $(n-1)$-measure, where $T$ is the 1-dimensional optimal transport
map, and this pair of sets is almost optimal for 
the Brunn-Minkowski inequality in dimension $n-1$.  In particular,
we can apply the inductive hypothesis to deduce that most $(n-1)$-dimensional
slices are close to their convex hulls.

This nearly suffices to apply Lemma \ref{lemma:concavity}.  But
this lemma asks for control of the superlevel sets of the function $b^A$,
which a priori may be very different from the slices of $A^\sim$.
To avoid this issue, we simply replace $A^\sim$ and $B^\sim$
by auxiliary sets $A^-$ and $B^-$ which consist of
the top profile of $A^\sim$ and $B^\sim$ with a flat bottom,
so that the slices coincide with the superlevel sets of $b^A$ and $b^B$.
We then show that Lemma \ref{lemma:BM n n-1} applies to $A^-$ and $B^-$.
In this way, we end up proving that $A^\sim$ and $B^\sim$ are 
close to convex sets, as desired.\\

{\it Step 6.} Since $A^\sim$ and $B^\sim$ are close to $A$ and $B$ respectively,
we simply apply Proposition \ref{prop:coS} as before in \ref{sect:natural} 
to conclude the proof of the theorem.\\

{\it Step 7.} Tracking down the exponents in the proof,
we provide an explicit lower (resp. upper) bound on $\e_n(\tau)$
(resp. $M_n(\tau)$).

%end color blue}

\section{Technical Results}
\label{sect:tech results}

In this section we state most of the important lemmas we will need.  The first
three are due to M. Christ (or are easy corollaries of his results).

It is well-known that both the Schwarz and the Steiner symmetrization preserve the 
measure of sets, while they decrease the measure of the semi-sum (see for instance
\cite[Lemma 2.1]{christ2}).  Also, as shown in \cite[Lemma 2.2]{christ2}, the 
$\natural$-symmetrization preserves the measure of the sets $\mathcal E(\lambda)$.
We combine these results into one lemma, and refer to
\cite[Section 2]{christ2} for a proof.
\begin{lemma}
\label{lem:symm}
Let $A,B\subset \R^n$ be compact sets. Then $|A|=|A^\ast|=|A^\star|=|A^\natural|$,
$$
|tA^\ast+(1-t)B^\ast| \leq |tA+(1-t)B|,\quad |tA^\star+(1-t)B^\star| \leq |tA+(1-t)B|,
\quad |tA^\natural+(1-t)B^\natural| \leq |tA+(1-t)B|,
$$
and, with the notation in \eqref{eq:Elambda},
$$
\bigl|A\setminus \pi^{-1}\bigl(\A(\lambda)\bigr)\bigr|
= \bigl|A^\natural\setminus \pi^{-1}\bigl(\A^\natural(\lambda)\bigr) \bigr|\qquad \text{and}\qquad \H^{n-1}\bigl(\A(\l)\bigr)=\H^{n-1}\bigl(\A^\natural(\l)\bigr)
$$
 for almost every $\lambda>0$.
\end{lemma}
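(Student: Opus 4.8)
\textbf{Proof proposal for Lemma \ref{lem:symm}.}

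The plan is to reduce everything to the two basic building blocks of one-step symmetrizations, and then iterate over coordinate directions. First I would recall that Steiner symmetrization $E \mapsto E^\star$ in the last coordinate replaces each vertical fiber $E_y$ by a centered interval of the same $\H^1$-measure; by Fubini this immediately gives $|E^\star| = |E|$. For the sum estimate $|tA^\star + (1-t)B^\star| \le |tA + (1-t)B|$, the key pointwise observation is that for each $y' \in \pi(A)$ and $y'' \in \pi(B)$, writing $y := ty' + (1-t)y''$, the fiber $(tA+(1-t)B)_y$ contains $t A_{y'} + (1-t) B_{y''}$, so its $\H^1$-measure is at least $\H^1(A_{y'}) + \cdot$ -- more carefully, one uses that for centered intervals $t I_1 + (1-t) I_2$ is the centered interval of length $t\,\H^1(I_1) + (1-t)\,\H^1(I_2)$, hence $(tA^\star + (1-t)B^\star)_y \subseteq (tA+(1-t)B)^\star_y$ whenever $y \in t\,\pi(A) + (1-t)\,\pi(B)$, and one concludes by integrating in $y$ and applying the one-dimensional Brunn-Minkowski inequality to the $\H^1$-measures of the fibers. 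The Schwarz case $E \mapsto E^\ast$ is analogous but uses horizontal slices $E(s) \subset \R^{n-1}$ and the $(n-1)$-dimensional Brunn-Minkowski inequality \eqref{eq:BM} applied slicewise, together with the fact that a convex combination of centered balls is the centered ball of the Brunn-Minkowski radius; again Fubini gives measure preservation and the slicewise argument gives the sum inequality.

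Next, for the $\natural$-symmetrization, since $E^\natural := (E^\star)^\ast$, both $|E^\natural| = |E|$ and the sum inequality for $\natural$ follow by composing the two previous statements: $|tA^\natural + (1-t)B^\natural| \le |tA^\star + (1-t)B^\star| \le |tA + (1-t)B|$, and likewise for the measures. For the statement about level sets, I would observe that $E \mapsto E^\star$ does not change $\H^1(E_y)$ for any $y$, so it fixes the function $y \mapsto \H^1(E_y)$ pointwise, hence fixes each set $\mathcal E(\lambda)$ and in particular $\H^{n-1}(\mathcal E(\lambda))$ and the "escaping mass" $|E \setminus \pi^{-1}(\mathcal E(\lambda))|$. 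It therefore suffices to treat the Schwarz step: one must show that $E \mapsto E^\ast$ preserves $\H^{n-1}(\mathcal E(\lambda))$ and the quantity $|E \setminus \pi^{-1}(\mathcal E(\lambda))|$ for a.e.\ $\lambda$. Here I would use the layer-cake / distribution-function identity: for a nonnegative function $f$ on $\R^{n-1}$ (namely $f(y) = \H^1(E_y)$), the datum of $\H^{n-1}(\{f > \lambda\})$ for all $\lambda$ is equivalent to the datum of the decreasing rearrangement of $f$, and Schwarz symmetrization in the last variable precisely replaces each horizontal slice $E(s)$ by the centered ball of equal $\H^{n-1}$-measure without changing which slices have positive measure -- so a careful Fubini bookkeeping shows the function $\lambda \mapsto \H^{n-1}(\mathcal E(\lambda))$ is unchanged, and consequently so is $|E \setminus \pi^{-1}(\mathcal E(\lambda))| = |E| - \int_{\mathcal E(\lambda)} \min\{\H^1(E_y), \text{stuff}\}$ -- more precisely one writes this escaping mass directly in terms of the distribution function of $f$ and observes invariance. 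In practice, since all of this is in \cite{christ2}, I would cite \cite[Lemmas 2.1 and 2.2]{christ2} for the level-set statements and only spell out the elementary sum inequalities.

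I expect the only genuine subtlety to be the level-set invariance under Schwarz symmetrization: one must be careful that the pointwise equality $\H^1((E^\ast)_y) = \H^1(E_y)$ generally \emph{fails} (Schwarz symmetrization rearranges across fibers, not within them), so the argument cannot be a naive fiberwise comparison as in the Steiner case; instead it must go through the equivalence between the family of superlevel sets and the decreasing rearrangement of $y \mapsto \H^1(E_y)$, using that Schwarz symmetrization does not alter the collection $\{\H^{n-1}(E(s))\}_s$ and hence does not alter the joint distribution of fiber lengths. This is exactly the content of \cite[Lemma 2.2]{christ2}, and since the paper explicitly defers to that reference, the cleanest presentation is to state the combined lemma and refer to \cite[Section 2]{christ2} for the proof, which is what the excerpt does.
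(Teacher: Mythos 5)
Your proposal is correct and, like the paper, ultimately defers to \cite[Section 2]{christ2} for the proof; the sketched arguments for measure preservation and the sum inequality for $\star$, $\ast$, and $\natural$ are sound, and the reduction of the level-set statements to the Schwarz step (after noting Steiner fixes $y \mapsto \H^1(E_y)$ pointwise) is the right idea.

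One imprecision worth noting: the claim that Schwarz symmetrization $E\mapsto E^\ast$ preserves the distribution of $y\mapsto\H^1(E_y)$ is \emph{false for a general compact set} $E$. For instance, in $\R^2$ take $E=\bigl([0,1]\times[0,1]\bigr)\cup\bigl([2,3]\times[1,2]\bigr)$: all vertical fibers of $E$ have length $1$ or $0$, while $E^\ast=[-\tfrac12,\tfrac12]\times[0,2]$ has fibers of length $2$ or $0$, so $\mathcal E(\lambda)$ and $\mathcal E^\ast(\lambda)$ differ on a set of positive $\lambda$-measure. What makes the argument work for $A^\natural=(A^\star)^\ast$ is that Steiner is applied \emph{first}, producing a set whose horizontal slices are nested (namely $A^\star(s)=\{y:\H^1(A_y)\geq 2|s|\}$). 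Only for sets with this nested-slice structure does the family $\{\H^{n-1}(E(s))\}_s$ determine, and hence Schwarz preserve, the distribution of the fiber-length function; this is the content of \cite[Lemma 2.2]{christ2}. So the step ``it suffices to show $E\mapsto E^\ast$ preserves $\H^{n-1}(\mathcal E(\lambda))$'' should read ``it suffices to show this for $E=A^\star$,'' and the nestedness must actually be invoked. With that caveat the proposal agrees with the paper's treatment.
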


Another important fact is that a bound on the measure of $tA+(1-t)B$ in terms 
of the measures of $A$ and $B$ implies bounds relating the sizes of
\[
\sup_{y}\H^1(A_y), \qquad \sup_{y}\H^1(B_y), \qquad \H^{n-1}\bigl(\pi(A)\bigr),
\qquad \H^{n-1}\bigl(\pi(B)\bigr).
\]
\begin{lemma}
\label{lem:rel meas}
Let $A,B\subset \R^n$ be compact sets such that $|A|,|B| \geq 1/2$ and $|tA+(1-t)B|\leq 2$ for some $t \in (0,1)$,
and set $\tau:=\min\{t,1-t\}$
There exists a dimensional constant $M>1$ such that
$$
\frac{\sup_{y}\H^1(A_y)}{\sup_{y}\H^1(B_y)} \in \biggl(\frac{\tau^{n}}M,\frac{M}{\tau^{n}}\biggr),
\qquad \frac{\H^{n-1}\bigl(\pi(A)\bigr)}{\H^{n-1}\bigl(\pi(B)\bigr)} \in \biggl(\frac{\tau^{n}}M,\frac{M}{\tau^{n}}\biggr),
$$
$$
\Bigl( \sup_{y}\H^1(A_y)\Bigr)\H^{n-1}\bigl(\pi(A)\bigr)\in \biggl(\frac{1}M,\frac{M}{\tau^{2n}}\biggr),\qquad
\Bigl(\sup_{y}\H^1(B_y)\Bigr)\H^{n-1}\bigl(\pi(B)\bigr)\in \biggl(\frac{1}M,\frac{M}{\tau^{2n}}\biggr).
$$
and, up a measure preserving affine transformation of the form $(y,s)\mapsto (\lambda y,\lambda^{1-n}t)$ with $\lambda>0$, we have
\begin{equation}
\label{eq:normalized}
\H^{n-1}\bigl(\pi(A)\bigr)+\H^{n-1}\bigl(\pi(B)\bigr)+\sup_{y}\H^1(A_y)+\sup_{y}\H^1(B_y)\leq \frac{M}{\tau^{2n}}.
\end{equation}
In this case, we say that $A$ and $B$ are ($M,\tau$)-normalized.
\end{lemma}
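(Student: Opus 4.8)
The plan is to derive every assertion from the Brunn–Minkowski inequality applied in dimensions $n$ and $n-1$, combined with elementary Fubini slicing. Throughout, write $a_\infty := \sup_y \H^1(A_y)$, $b_\infty := \sup_y \H^1(B_y)$, $p_A := \H^{n-1}(\pi(A))$, $p_B := \H^{n-1}(\pi(B))$. The crude but crucial observations are: on the one hand $A \subset \pi(A)\times[-a_\infty/2, a_\infty/2]$ (after a vertical translation), so $|A| \le a_\infty p_A$, giving the lower bounds $a_\infty p_A \ge |A| \ge 1/2$ and $b_\infty p_B \ge 1/2$; on the other hand, $tA + (1-t)B$ contains a vertical translate of $(t\,\pi(A) + (1-t)\pi(B)) \times (ta_\infty' + (1-t)b_\infty')$ for suitable slices realizing (nearly) the sup, which together with $|tA+(1-t)B| \le 2$ will yield the matching upper bounds.

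More precisely, first I would pin down the upper bounds. Pick $y_A$ with $\H^1(A_{y_A})$ close to $a_\infty$ and $y_B$ with $\H^1(B_{y_B})$ close to $b_\infty$; then $tA_{y_A} + (1-t)B_{y_B}$ is (a translate of) a segment of length at least $t a_\infty/2 + (1-t) b_\infty/2$ sitting over the point $t\,\pi(y_A) + (1-t)\,\pi(y_B)$, and more globally $tA + (1-t)B \supset \pi^{-1}\big(t\mathcal A(a_\infty/2) + (1-t)\mathcal B(b_\infty/2)\big) \cap (\text{vertical strip of appropriate height})$. Applying the $(n-1)$-dimensional Brunn–Minkowski inequality to $\pi(A)$ and $\pi(B)$ gives $\H^{n-1}(t\pi(A)+(1-t)\pi(B)) \ge \big(t\, p_A^{1/(n-1)} + (1-t)\, p_B^{1/(n-1)}\big)^{n-1}$, and combining this with $|tA+(1-t)B| \le 2$ and the slice-height bound produces an inequality of the shape $\big(t\, p_A^{1/(n-1)} + (1-t)\, p_B^{1/(n-1)}\big)^{n-1}\,(t a_\infty + (1-t) b_\infty) \lesssim 2$. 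Using $t, 1-t \ge \tau$ this forces $p_A, p_B \lesssim \tau^{-(n-1)}$ and $a_\infty, b_\infty \lesssim \tau^{-1}$ individually; feeding those back into $a_\infty p_A \ge 1/2$ and $b_\infty p_B \ge 1/2$ then also bounds each of $a_\infty, b_\infty$ \emph{below} by $\gtrsim \tau^{n-1}$, etc. Chaining the one-sided bounds gives the products $a_\infty p_A, b_\infty p_B \in (1/M, M\tau^{-2n})$ and the ratios $a_\infty/b_\infty, p_A/p_B \in (\tau^n/M, M/\tau^n)$ after a generous choice of the dimensional constant $M$; I would not optimize the powers, just verify they fit inside the stated windows.

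For the normalization \eqref{eq:normalized}, apply the measure-preserving map $(y,s)\mapsto(\lambda y, \lambda^{1-n}s)$, which scales $p_A \mapsto \lambda^{n-1} p_A$ and $a_\infty \mapsto \lambda^{1-n} a_\infty$ (and likewise for $B$), hence preserves all products and ratios above and commutes with the hypotheses (it preserves $|A|$, $|B|$, $|tA+(1-t)B|$). Choosing $\lambda$ so that, say, $\lambda^{n-1} p_A = \lambda^{1-n} a_\infty$, i.e. $\lambda^{2(n-1)} = a_\infty / p_A$, balances the two scales of $A$; since $a_\infty p_A$ and the ratios are all controlled, after this choice each of $p_A, p_B, a_\infty, b_\infty$ is individually $\lesssim \tau^{-n}$-ish, and their sum is $\le M\tau^{-2n}$ for $M$ large enough.

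The main obstacle is the upper-bound step: one must be careful that the slices $\mathcal A(a_\infty/2)$ and $\mathcal B(b_\infty/2)$ have positive $(n-1)$-measure and that the vertical extents of $A$ and $B$ over their projections are genuinely comparable to $a_\infty$ and $b_\infty$ on a set of $y$ of definite measure — a priori the sup could be attained on a null set. The fix is standard: replace $a_\infty$ by a level $\lambda_A$ chosen (via a layer-cake / Chebyshev argument from $|A| \ge 1/2$ and $|A| \le a_\infty p_A$) so that $\H^{n-1}(\mathcal A(\lambda_A))$ is a definite fraction of $p_A$ with $\lambda_A$ a definite fraction of $a_\infty$, and similarly for $B$; then $tA + (1-t)B$ contains a translate of $\big(t\mathcal A(\lambda_A) + (1-t)\mathcal B(\lambda_B)\big) \times (t\lambda_A + (1-t)\lambda_B)$, whose measure is bounded below by the $(n-1)$-dimensional Brunn–Minkowski inequality, giving exactly the needed inequality with only constant losses. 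Everything else is bookkeeping with the constant $M$.
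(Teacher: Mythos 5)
Your overall plan---Fubini slicing, $(n-1)$-dimensional Brunn--Minkowski, and chaining the resulting one-sided bounds---is in the spirit of the paper's proof (which simply quotes \cite[Lemma 3.1]{christ2}), and the chaining and rescaling steps you sketch are fine. But the ``standard Chebyshev fix'' you invoke for the upper bounds contains a genuine gap: it is \emph{not} true in general that one can pick a level $\lambda_A$ which is a definite fraction of $a_\infty:=\sup_y\H^1(A_y)$ while $\H^{n-1}(\mathcal A(\lambda_A))$ is simultaneously a definite fraction of $p_A:=\H^{n-1}(\pi(A))$. Take $A$ to be a unit-volume slab $B_1\times[0,c]$ (with $B_1\subset\R^{n-1}$) together with a narrow spike $B_\varepsilon\times[0,L]$, $L$ huge and $\varepsilon$ tiny. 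Then $|A|\approx 1$ and $p_A$ is of order one, but $a_\infty=L$ is arbitrarily large, and for any $\lambda>c$ the superlevel set $\mathcal A(\lambda)$ is confined to the $\varepsilon$-ball, so $\H^{n-1}(\mathcal A(\lambda))=O(\varepsilon^{n-1})$ bears no relation to $p_A$. Your central symmetric inequality
\[
\bigl(t\H^{n-1}(\mathcal A(\lambda_A))^{1/(n-1)}+(1-t)\H^{n-1}(\mathcal B(\lambda_B))^{1/(n-1)}\bigr)^{n-1}\bigl(t\lambda_A+(1-t)\lambda_B\bigr)\le |S|\le 2
\]
therefore cannot be instantiated with $\lambda_A\sim a_\infty$, $\H^{n-1}(\mathcal A(\lambda_A))\sim p_A$, and likewise for $B$.

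The repair is to decouple the roles of $A$ and $B$, and once you do so the $(n-1)$-dimensional Brunn--Minkowski inequality drops out entirely. The quantity one bounds directly is the \emph{cross} product $a_\infty\,p_B$, not $a_\infty\,p_A$. Fix a single $y_A$ with $\H^1(A_{y_A})>a_\infty/2$; for every $y_B\in\pi(B)$ the fiber $S_{ty_A+(1-t)y_B}$ contains a translate of $tA_{y_A}$ and hence has length at least $t\,a_\infty/2$. Integrating over the affine copy $ty_A+(1-t)\pi(B)$, of $\H^{n-1}$-measure $(1-t)^{n-1}p_B$, gives $2\ge|S|\ge \tfrac12\,t(1-t)^{n-1}a_\infty p_B$, i.e.\ $a_\infty p_B\le 4/\tau^n$---exactly the inequality of \cite[Lemma 3.1]{christ2} that the paper cites. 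Combined with the trivial Fubini bound $a_\infty p_A\ge|A|\ge1/2$ and the two swaps $A\leftrightarrow B$, this yields all four stated estimates by the chaining you describe, and your normalization step then goes through unchanged.
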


\begin{proof}
As observed in \cite[Lemma 3.1]{christ2} and in the discussion immediately after that lemma,
$$
\Bigl(\sup_{y}\H^1(A_y)\Bigr)\H^{n-1}\bigl(\pi(B)\bigr) \leq \frac{|tA+(1-t)B|}{t(1-t)^{n-1}}\leq \frac{2}{\tau^n},\qquad
\Bigl(\sup_{y}\H^1(A_y)\Bigr)\H^{n-1}\bigl(\pi(A)\bigr)
\geq |A|\geq 1/2.
$$
By exchanging the roles of $A$ and $B$, the first part of the lemma follows.
To prove the second part, it suffices to choose $\lambda>0$ so that $\lambda^{n-1}\H^{n-1}\bigl(\pi(A)\bigr)=1/\tau^n$.
\end{proof}

The third lemma is a result of Christ \cite[Lemma 4.1]{christ1}
showing that $\sup_{s}\H^{n-1}\bigl(A(s)\bigr)$ and $\sup_{s}\H^{n-1}\bigl(B(s)\bigr)$
are close in terms of $\d$:
\begin{lemma}
\label{lem:pi close}
Let $A,B\subset \R^n$ be compact sets, define $S:=tA+(1-t)B$ for some $t \in [\tau,1-\tau]$,
and assume that \eqref{eq:measures} holds for some $\delta \leq 1/2$.
Then there exists a numerical constant $L>0$ such that
$$
\frac{\sup_{s}\H^{n-1}\bigl(A(s)\bigr)}{\sup_{s}\H^{n-1}\bigl(B(s)\bigr)} \in \bigl(1-L\tau^{-1/2}\d^{1/2},1+L\tau^{-1/2}\d^{1/2}\bigr).
$$
\end{lemma}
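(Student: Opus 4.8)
The plan is to run the one–dimensional transport proof of \eqref{eq:BM} on the vertical profiles
$f(s):=\H^{n-1}\bigl(A(s)\bigr)$, $g(s):=\H^{n-1}\bigl(B(s)\bigr)$, $h(s):=\H^{n-1}\bigl(S(s)\bigr)$, to keep track of its deficit, and to read off from it the closeness of the maximal slices. Write $p:=1/(n-1)$. Since $S(ts_1+(1-t)s_2)\supseteq tA(s_1)+(1-t)B(s_2)$ whenever $s_1\in\bar\pi(A)$, $s_2\in\bar\pi(B)$, the Brunn--Minkowski inequality in $\R^{n-1}$ gives
\[
h\bigl(ts_1+(1-t)s_2\bigr)\ \ge\ \bigl(t\,f(s_1)^{p}+(1-t)\,g(s_2)^{p}\bigr)^{1/p}.
\]

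Let $T\colon\bar\pi(A)\to\bar\pi(B)$ be the monotone map transporting $f\,ds/|A|$ onto $g\,ds/|B|$, so that $f(s)=\tfrac{|A|}{|B|}\,g(T(s))\,T'(s)$ a.e., and set $R(s):=ts+(1-t)T(s)$, a strictly increasing absolutely continuous map with $R'(s)=t+(1-t)T'(s)$. Changing variables and using the displayed inequality with $s_1=s$, $s_2=T(s)$ yields the classical chain
\[
|S|\ \ge\ \int_{R(\bar\pi(A))}h\,d\sigma\ =\ \int_{\bar\pi(A)}h\bigl(R(s)\bigr)R'(s)\,ds\ \ge\ \int_{\bar\pi(A)}\bigl(t f(s)^{p}+(1-t)g(T(s))^{p}\bigr)^{1/p}R'(s)\,ds\ \ge\ |A|^{t}|B|^{1-t},
\]
the last step by the arithmetic--geometric mean inequality applied to the pair $f(s)^{p},g(T(s))^{p}$ and to the pair $1,T'(s)$. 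Since $|S|\le 1+\delta$ while $|A|^{t}|B|^{1-t}\ge 1-\delta$, the total deficit in this chain is at most $2\delta$. The key point is that the leading part of the pointwise deficit of the last inequality equals $\tfrac{t(1-t)(1+p)}{2}\,\bigl(f(s)-g(T(s))\bigr)^{2}/f(s)$, \emph{with no dimensional constant} (the $1/(n-1)$ produced by the outer exponent $1/p$ contributes only to the coefficient $1+p\in(1,2]$ and to lower order terms, while the arithmetic--geometric mean gap for $1,T'(s)$ already supplies a full weight $t(1-t)$). Combined with $|B|/|A|=1+O(\delta)$ from \eqref{eq:measures}, this produces the dimension-free estimate
\[
\int_{\bar\pi(A)}\frac{\bigl(\H^{n-1}(A(s))-\H^{n-1}(B(T(s)))\bigr)^{2}}{\H^{n-1}(A(s))}\,ds\ \le\ C\,\frac{\delta}{\tau},
\]
so that $\H^{n-1}\bigl(B(T(s))\bigr)=\bigl(1+O(\tau^{-1/2}\delta^{1/2})\bigr)\H^{n-1}\bigl(A(s)\bigr)$ for every $s$ outside a set $E\subset\bar\pi(A)$ on which $\int_E\H^{n-1}\bigl(A(s)\bigr)\,ds$ is small.

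It remains to pass from this to the suprema. If the height $\bar s_A$ realizing $\alpha:=\sup_s\H^{n-1}\bigl(A(s)\bigr)$ lies outside $E$, then $\sup_s\H^{n-1}\bigl(B(s)\bigr)\ge\H^{n-1}\bigl(B(T(\bar s_A))\bigr)\ge\bigl(1-C'\tau^{-1/2}\delta^{1/2}\bigr)\alpha$, and exchanging the roles of $A$ and $B$ gives the two-sided bound in the statement. \textbf{The main obstacle} is that $\alpha$ may be realized only on a set of measure zero --- a thin ``fin'' $A(\bar s_A)\times\{\bar s_A\}$ --- so that $\bar s_A$ need not avoid $E$; moreover, even in the generic case some care is needed to reach the sharp exponent $\tfrac12$ (one chooses $\bar s_A$ in the superlevel set $\{\H^{n-1}(A(s))\ge(1-\rho)\alpha\}$ and optimizes $\rho$ against the weighted $L^{2}$ bound above). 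The way around the fin is to note that such a fin forces $S\supseteq tA(\bar s_A)\times\{t\bar s_A\}+(1-t)B$, a set lying over $t\bar s_A+(1-t)\bar\pi(B)$ whose slice at height $t\bar s_A+(1-t)s$ has $\H^{n-1}$-measure at least $\bigl(t\alpha^{p}+(1-t)g(s)^{p}\bigr)^{1/p}$; estimating the portion of this set not already accounted for by the transport bulk and comparing with the surplus $|S|-|A|^{t}|B|^{1-t}\le2\delta$ again forces $\alpha$ to be within a factor $1+O(\tau^{-1/2}\delta^{1/2})$ of $\sup_s\H^{n-1}\bigl(B(s)\bigr)$. (As usual, the crude a priori bound $\sup_s\H^{n-1}(A(s))/\sup_s\H^{n-1}(B(s))\le C\tau^{-(n-1)}$ --- obtained from Lemma \ref{lem:rel meas} with the roles of the factors $\R^{n-1}$ and $\R$ interchanged --- lets us dispose for free of the case where $\delta$ is not small in terms of $n$ and $\tau$.) Tracking the constants through this argument, all of the genuinely $n$-dependent steps having been isolated in the last remark, yields the numerical $L$ of the statement.
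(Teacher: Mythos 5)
Your route is genuinely different from the paper's. The paper's proof is short: it sets $\gamma=(\alpha/\beta)^{1-t}$ and $\tilde\gamma=(\beta/\alpha)^t$ with $\alpha:=\sup_s\H^{n-1}(A(s))$, $\beta:=\sup_s\H^{n-1}(B(s))$, invokes the argument of Christ's Lemma~4.1 in \cite{christ1} to get the \emph{pointwise-in-$\gamma$} bound $|S|\geq t\gamma^{-1}|A|+(1-t)\tilde\gamma^{-1}|B|$, and then finishes with a one-line convexity estimate on the scalar function $\gamma\mapsto t\gamma^{-1}+(1-t)\gamma^{t/(1-t)}$, whose second derivative on $(0,1]$ is bounded below by $\tau$. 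Crucially, Christ's bound is formulated directly in terms of the two suprema, so there is no need to locate points where they are ``approximately realized''; the ``fin'' issue never arises. You instead rerun the $1$d transport proof of Brunn--Minkowski (essentially Lemma~\ref{lemma:BM n n-1}) and then try to extract the supremum comparison from an integral (weighted $L^2$) bound. Your observation that retaining only the AM--GM gap from the pair $(1,T'(s))$ yields a \emph{dimension-free} coefficient is correct (the leading second-order coefficient is $t(1-t)(1+p)/2$ with $p=1/(n-1)$, so bounded below by $\tau/2$); this is a nontrivial improvement over the $C(n)\tau^{-n/2}$ of \eqref{induction:key}, and I checked the Taylor expansion.

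However, there is a genuine gap in the passage from the integral estimate to the statement about suprema, and it is not confined to the literal ``fin'' case. The transport bound gives something like
\[
\int \bigl(u(s)-1\bigr)^2\, f(s)\,ds \;\leq\; \frac{C\,\delta}{\tau},\qquad u:=\frac{\H^{n-1}(A(\cdot))}{\H^{n-1}(B(T(\cdot)))},\ f:=\H^{n-1}(A(\cdot)).
\]
To conclude $\beta\geq(1-C\tau^{-1/2}\delta^{1/2})\alpha$ you need an $s$ with both $f(s)\geq(1-\rho)\alpha$ and $u(s)\leq 1+\xi$, with $\rho+\xi\lesssim \tau^{-1/2}\delta^{1/2}$. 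Chebyshev on the $L^2$ bound gives $\int_{\{u>1+\xi\}}f\lesssim \delta/(\tau\xi^2)$, which for $\xi\sim\tau^{-1/2}\delta^{1/2}$ is only $O(1)$, \emph{not small}; so the exceptional set can swallow the whole superlevel set $\{f\geq(1-\rho)\alpha\}$ unless that set has $f$-measure bounded below uniformly in $\rho$. That happens only if $f$ has a genuine plateau near its supremum. In the generic case (say $m(\rho):=\H^1(\{f\geq(1-\rho)\alpha\})\sim\rho^a$, $a>0$) the optimization over $\rho$ degrades the exponent from $1/2$ to something strictly worse, and when $m(\rho)\to 0$ as $\rho\to 0$ (the fin) the Chebyshev step gives nothing at all. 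Your sketched fin argument (``estimate the portion of the slab $tA(\bar s_A)\times\{\bar s_A\}+(1-t)B$ not accounted for by the transport bulk'') is precisely the hard part: that slab overlaps the transport image in height, and disentangling the overlap while preserving the sharp $\delta^{1/2}$ is the content of Christ's Lemma~4.1, not something that falls out of a Chebyshev argument. As written, the proposal does not establish the lemma with the stated exponent; either adopt Christ's sup-based slab decomposition or supply a detailed pointwise lower bound for $h(\sigma)$ in terms of $\alpha$ and $\beta$ that does not pass through an $L^2$ average.
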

\begin{proof}
Set 
$$
\gamma:=\biggl(\frac{\sup_{s}\H^{n-1}\bigl(A(s)\bigr)}{\sup_{s}\H^{n-1}\bigl(B(s)\bigr)}\biggr)^{1-t},
\qquad \tilde\gamma:=\biggl(\frac{\sup_{s}\H^{n-1}\bigl(B(s)\bigr)}{\sup_{s}\H^{n-1}\bigl(A(s)\bigr)}\biggr)^{t},
$$
and after possibly exchanging $A$ and $B$, we may assume that $\gamma \leq 1$.
By the argument in the proof of \cite[Lemma 4.1]{christ1} we get
$$
|S|\geq t\gamma^{-1}|A|+(1-t)\tilde\gamma^{-1}|B|,
$$
so, by \eqref{eq:measures},
$$
\Bigl(t\gamma^{-1}+ (1-t)\gamma^{t/(1-t)} \Bigr)-1
\leq 4\delta.
$$
The function
$$
\gamma\mapsto t\gamma^{-1}+ (1-t)\gamma^{t/(1-t)}
$$
is convex for $\gamma \in (0,1]$, attains its minimum  at $\gamma=1$,
and its second derivative is bounded below by $\tau$.  It follows
that 
$$
4 \delta \geq 
\Bigl(t\gamma^{-1}+ (1-t)\gamma^{t/(1-t)} \Bigr)-1 
\geq \frac{\tau}{2} |\gamma-1|^2,
$$
which proves the result.
\end{proof}

There are several other important ingredients in the proof of 
Theorem \ref{thm:main}, which are to our knowledge new.
Because their proofs are long and involved, we postpone them 
to Section \ref{sect:proofs}.

The first of these results shows that if $A$ and $B$ are $L^1$-close to convex sets $K_A$ and $K_B$
respectively, then $A$ and $B$ are close to each other, and
we can find a convex set $\K$ which contains both $A$ and $B$ with a good control on the measure.
As we shall see, the proof relies primarily on Theorem \ref{thm:BMconvex}.
\begin{proposition}
\label{prop:coS}
Let $A,B\subset \R^n$ be compact sets, define $S:=tA+(1-t)B$ for some $t \in [\tau,1-\tau]$,
and assume that \eqref{eq:measures} holds. Suppose $A,B\subset B_R$, 
for some $R \leq \tau^{-N_n}$ with $N_n$ a dimensional constant $N_n>1$.
Suppose further that we can find a convex sets $K_A,K_B
\subset \R^n$ such that 
\begin{equation}
\label{eq:KA KB zeta}
|A\Delta K_A|+|B\Delta K_B|\leq \zeta
\end{equation}
for some $\zeta \geq \delta$.
Then there exists a dimensional constant $L_n>1$ such that
after a translation,
$$
|A\Delta B| \leq \tau^{-L_n}\zeta^{1/2n}
$$
and there exists a convex set $\K$ containing both $A$ and $B$ such that
$$
|\K\setminus A|+|\K\setminus B| \leq \tau^{-L_n}\zeta^{1/2n^3}.
$$
\end{proposition}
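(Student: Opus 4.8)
The plan is to reduce everything to the convex case via Theorem \ref{thm:BMconvex}, after first showing that $S$ is itself close to the convex combination $tK_A+(1-t)K_B$. First I would record the elementary fact that $|tA+(1-t)B|\ge |tK_A+(1-t)K_B| - C(t)\zeta$: since $A$ and $K_A$ differ by a set of measure $\le\zeta$ and both sit inside $B_R$, the Minkowski sum $tA+(1-t)B$ contains $tK_A+(1-t)K_B$ minus a set whose measure is controlled by $\zeta$ times a power of $R$ (a point of $tK_A+(1-t)K_B$ can fail to lie in $tA+(1-t)B$ only if its $K_A$-representative lies in $K_A\setminus A$ or its $K_B$-representative lies in $K_B\setminus B$; integrating over the other variable gives the bound $R^{n-1}\zeta$, up to constants). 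With $R\le\tau^{-N_n}$ this gives $|tK_A+(1-t)K_B|\le 1 + C\tau^{-N_n(n-1)}\zeta$. Normalizing $K_A,K_B$ to unit measure costs only another $O(\zeta)$ (since $||K_A|-1|,||K_B|-1|\le 2\zeta$ by \eqref{eq:measures} and \eqref{eq:KA KB zeta}), so $K_A$ and $K_B$ are convex sets of measure close to $1$ with $|tK_A+(1-t)K_B|^{1/n}\le 1+\delta'$ for $\delta'\approx\tau^{-N_n(n-1)}\zeta$.

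Next I would apply Theorem \ref{thm:BMconvex} to $K_A,K_B$ to conclude that, up to a translation, $|K_A\Delta K_B|\le C_0(n)\tau^{-1/2n}(\delta')^{1/2}\le \tau^{-L_n'}\zeta^{1/2}$ for a suitable dimensional $L_n'$. Combining with \eqref{eq:KA KB zeta} and the triangle inequality for symmetric differences gives $|A\Delta B|\le |A\Delta K_A|+|K_A\Delta K_B|+|K_B\Delta B|\le \tau^{-L_n}\zeta^{1/2n}$ once $L_n$ is chosen large enough (here one uses $\zeta^{1/2}\le\zeta^{1/2n}$, which is legitimate since $\zeta\le 1$ — this follows because $\zeta\ge\delta$ and the hypotheses force $\delta$, hence the relevant quantities, to be small; if $\zeta\ge 1$ the statement is vacuous after enlarging the constant). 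This establishes the first bound.

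For the convex set $\K$, the idea is to take something like the convex hull of $K_A$ and a translate of $K_B$, or more robustly $\K := \co\big((1-\sigma)K_A\big)$ slightly dilated, but the honest construction is: since $K_A$ and $K_B$ are nearly homothetic with ratio $\approx 1$ and $|K_A\Delta K_B|$ small, by Theorem \ref{thm:BMconvex} applied again (or by a John-ellipsoid/stability argument for convex bodies) there is $\rho = 1 + O(\tau^{-c}\zeta^{1/2n^2})$ such that both $K_A$ and $K_B$, suitably translated, are contained in $\rho K_A$. One then sets $\K := \rho K_A$ (appropriately translated), which is convex, contains $A\setminus(K_A\setminus A)$ and $B\setminus(K_B\setminus B)$ directly and the rest up to measure $\zeta$; thus $|\K\setminus A|\le |\K\setminus K_A| + |K_A\setminus A| \le (\rho^n-1)|K_A| + \zeta \le \tau^{-L_n}\zeta^{1/2n^3}$, and similarly for $B$. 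The loss from $\zeta^{1/2n}$ down to $\zeta^{1/2n^3}$ is exactly the price of converting an $L^1$-closeness of two convex bodies into a genuine \emph{containment} with a dilation factor: controlling $\rho-1$ in terms of $|K_A\Delta K_B|$ via inradius/circumradius comparisons produces the extra powers of $1/2n$.

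The main obstacle I expect is this last step — producing the single convex set $\K$ that \emph{contains} both $A$ and $B$ with the stated polynomial-in-$\zeta$ bound on $|\K\setminus A|+|\K\setminus B|$. Closeness in $L^1$ does not by itself give containment, so one must quantitatively upgrade "$K_A$ and $K_B$ are $L^1$-close convex bodies of nearly equal volume" to "a mild dilate of one contains a translate of the other," and the dilation factor must be bounded by a power of $\zeta$. Doing this carefully (via bounds on in- and circumradii of $K_A$, which are themselves controlled because $K_A\subset B_R$ and $|K_A|\approx 1$, so the circumradius is $\le R\le\tau^{-N_n}$ and the inradius is $\ge c\tau^{nN_n}$ or so) is where the somewhat ugly exponent $1/2n^3$ and the constant $L_n$ come from, and it is the only genuinely non-formal part of the argument.
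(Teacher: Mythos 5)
Your overall strategy coincides with the paper's: show that $tK_A+(1-t)K_B$ is nearly contained in $S$, invoke Theorem \ref{thm:BMconvex} to get $K_A$ and $K_B$ close, then build a convex set $\K$ containing both $A$ and $B$. But there are two problems, one minor and one a genuine gap.

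The minor issue is your first step. You claim $|tK_A+(1-t)K_B|\le |S| + C\tau^{-N_n(n-1)}\zeta$ by arguing that a point of $tK_A+(1-t)K_B$ outside $S$ has ``its $K_A$-representative'' in $K_A\setminus A$ or ``its $K_B$-representative'' in $K_B\setminus B$. A point $z$ has an $n$-dimensional fiber of representations $(x_1,x_2)\in K_A\times K_B$ with $tx_1+(1-t)x_2=z$, not a single representative; to conclude $z\notin S$ you must kill \emph{all} of them, and ``integrating over the other variable'' does not bound the measure of such $z$. The paper handles this correctly by estimating the convolution $\chi_{tK_A}*\chi_{(1-t)K_B}$ from below on a shrunken copy $(1-C\zeta^{1/n})[tK_A+(1-t)K_B]$ (using John's lemma to produce a universal inner ball), and comparing to $\chi_{tA}*\chi_{(1-t)B}$ in $L^\infty$. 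This gives $|tK_A+(1-t)K_B|\le 1+C\zeta^{1/n}$, not $1+C\zeta$; your claimed linear-in-$\zeta$ bound is not justified. This is fixable since $\zeta^{1/n}$ still feeds into Theorem \ref{thm:BMconvex} to give $|K_A\Delta K_B|\le C\zeta^{1/2n}$, exactly the exponent you eventually write down.

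The genuine gap is the last step. You set $\K=\rho K_A$ with $\rho-1 = O(\zeta^{1/2n^2})$ and claim $\K$ contains $A$ and $B$ ``up to measure $\zeta$.'' But the proposition asserts $A,B\subset\K$, not approximate containment, and $\rho K_A$ need not contain $A$ at all: the set $A\setminus K_A$, though of measure $\le\zeta$, can lie anywhere in $B_R$, far outside any mild dilate of $K_A$. You correctly identify that converting $L^1$-closeness into containment is the hard part, but you locate the difficulty in comparing the convex bodies $K_A,K_B$ (which is comparatively easy, as you sketch). The actual obstruction is controlling how far the non-convex set $A$ itself can protrude from $\K_0:=\co(K_A\cup K_B)$. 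The paper handles this by going back to the Minkowski sum: if $x\in A\setminus\K_0$ has $\dist(x,\K_0)=\rho$, then $tx + (1-t)(\K_\rho\cap B)$ (where $\K_\rho$ is a thin slab of $\K_0$ near the supporting hyperplane) is a subset of $S$ disjoint from $S\cap\K_0$, of measure $\gtrsim \tau^n\rho^n$; since $|S\Delta\K_0|\le C\zeta^{1/2n^2}$ this forces $\rho\le C\zeta^{1/2n^3}$. That is where the exponent $1/2n^3$ comes from — not from inradius/circumradius comparisons of convex bodies. Without some version of this argument your construction does not yield a set containing $A$ and $B$, so the proof as written is incomplete.
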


%end color green}

%{\color{blue}
Our next result is a consequence of a proof of the Brunn-Minkowski
inequality by induction, using horizontal $(n-1)$-dimensional slices. 
The lemma says that when $A$, $B$, and $S$ satisfy \eqref{eq:measures}, then
most of their horizontal slices  (chosen at suitable levels) satisfy near 
equality in the Brunn-Minkowski inequality and the ratio of their volumes 
of the slices is comparable to $1$
on a large set. 
\begin{lemma} \label{lemma:BM n n-1}
Given compact sets $A,B\subset\R^n$ and $S:=tA+(1-t)B$, and recalling the notation 
$E(s)\subset \R^{n-1}\times\{s\}$ in \eqref{eq:Ey}, we define the probability 
densities on the real line
\begin{equation}
\label{eq:rhoABS}
\rho_A(s):=\frac{\H^{n-1}\bigl(A(s)\bigr)}{|A|},\qquad \rho_B(s):=\frac{\H^{n-1}\bigl(B(s)\bigr)}{|B|},
\qquad \rho_S(s):=\frac{\H^{n-1}\bigl(S(s)\bigr)}{|S|}.
\end{equation}
Let $T:\R\to \R$ be the monotone rearrangement sending $\rho_A$ onto $\rho_B$, that is $T$ is an increasing map such that $T_\sharp\rho_A=\rho_B$.\footnote{$T_\sharp$ 
denotes the push-forward through the map $T$, that is,
$$
T_\sharp\rho_A=\rho_B \qquad \Leftrightarrow \qquad \int_E\rho_B(s)\,ds=
\int_{T^{-1}(E)} \rho_A(s)\,ds\quad\forall\,\text{$E\subset \R$ Borel.}
$$ 
An explicit formula for $T$ can be given using the distribution functions of $\rho_A$ and $\rho_B$: if we define
$$
G_A(s):=\int_{-\infty}^s \rho_A(s')\,ds',
\qquad G_B(s):=\int_{-\infty}^s \rho_B(s')\,ds',
$$
and we set $G_B^{-1}(r):=\inf\{s \in \R \,:\,G_A(s)>t\}$, then 
$T=G_B^{-1}\circ G_A$. }
Then
\begin{equation}
\label{eq:BM n n-1}
|S|-\Bigl(t|A|^{1/n}+(1-t)|B|^{1/n}\Bigr)^n\geq
\int_\R e_{n-1}(s) \bigl(t + (1-t)T'(s)\bigr)\, ds,
\end{equation}
where $T_t(s) := ts + (1-t)T(s)$ and
\[
e_{n-1}(s) : = \H^{n-1}\bigl(S(T_t(s))\bigr)-
\left[t\H^{n-1}\bigl(A(s)\bigr)^{1/(n-1)}+(1-t)\H^{n-1}\bigl(B(T(s))\bigr)^{1/(n-1)} \right]^{n-1}.
\]
Moreover,  if  $t \in [\tau,1-\tau]$ and \eqref{eq:measures} holds
with $\delta/\tau^n$ is sufficiently small, then 
\begin{equation}
\label{induction:key}
\int_\R \biggl| \frac{\rho_A(s)}{\rho_B(T(s))} - 1\biggr|\,\rho_A(s)\,ds \leq \frac{C(n)}{\tau^{n/2}} \delta^{1/2}.
\end{equation}
\end{lemma}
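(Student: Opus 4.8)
\textbf{Proof proposal for Lemma \ref{lemma:BM n n-1}.}

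The plan is to split the lemma into two independent parts. For the inequality \eqref{eq:BM n n-1}, I would run a ``sliced'' proof of Brunn--Minkowski: write $|A| = \int_\R \H^{n-1}(A(s))\,ds$, and similarly for $B$ and $S$, and change variables in the $S$-integral via $s \mapsto T_t(s) = ts + (1-t)T(s)$. Since $T$ is monotone with $T_\sharp \rho_A = \rho_B$, the map $T_t$ is monotone increasing with $T_t'(s) = t + (1-t)T'(s) > 0$, so this is a legitimate change of variables and it produces the weight $\bigl(t + (1-t)T'(s)\bigr)\,ds$ in \eqref{eq:BM n n-1}. The key geometric input is the one-dimensional inclusion $S(T_t(s)) \supseteq t\,A(s) + (1-t)\,B(T(s))$ (up to translation of the slices), which holds because if $a \in A(s)$ and $b \in B(T(s))$ then $ta + (1-t)b$ lies in the slice of $S$ at height $ts + (1-t)T(s)$. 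Applying the $(n-1)$-dimensional Brunn--Minkowski inequality to $A(s)$ and $B(T(s))$ gives
\[
\H^{n-1}\bigl(S(T_t(s))\bigr) \geq \Bigl[t\,\H^{n-1}(A(s))^{1/(n-1)} + (1-t)\,\H^{n-1}(B(T(s)))^{1/(n-1)}\Bigr]^{n-1},
\]
i.e.\ $e_{n-1}(s) \geq 0$ pointwise. Feeding this back, one gets $|S| \geq \int_\R \bigl[t\,\H^{n-1}(A(s))^{1/(n-1)} + (1-t)\,\H^{n-1}(B(T(s)))^{1/(n-1)}\bigr]^{n-1}\bigl(t + (1-t)T'(s)\bigr)\,ds + \int_\R e_{n-1}(s)\bigl(t+(1-t)T'(s)\bigr)\,ds$, and the first integral is bounded below by $\bigl(t|A|^{1/n} + (1-t)|B|^{1/n}\bigr)^n$ by an application of the one-dimensional Borell--Brascamp--Lieb / Prékopa--Leindler inequality (equivalently, Hölder's inequality in the integrated form), using $\int \H^{n-1}(A(s))\,ds = |A|$ and the change of variables $\int \H^{n-1}(B(T(s)))\,T'(s)\,ds = |B|$. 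Rearranging gives \eqref{eq:BM n n-1}.

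For the quantitative statement \eqref{induction:key}, I would use \eqref{eq:measures} together with \eqref{eq:BM n n-1}. Since $\bigl|(t|A|^{1/n} + (1-t)|B|^{1/n})^n - 1\bigr| \lesssim \delta$ and $\bigl||S|-1\bigr| \leq \delta$, the left side of \eqref{eq:BM n n-1} is $O(\delta)$, hence $\int_\R e_{n-1}(s)\bigl(t + (1-t)T'(s)\bigr)\,ds \lesssim \delta$. But the \emph{other} term that was dropped in passing from the first integral to $(t|A|^{1/n}+(1-t)|B|^{1/n})^n$ must also be $O(\delta)$, and that term is exactly a deficit in the one-dimensional Hölder/Prékopa step, which controls how far $\rho_A$ and $\rho_B \circ T$ (appropriately normalized) are from being proportional. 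Concretely, after normalizing $|A|, |B|$ to be within $\delta$ of $1$, the step $\int \bigl[t f^{1/(n-1)} + (1-t) g^{1/(n-1)}\bigr]^{n-1} \geq \bigl(t \int f + (1-t)\int g\bigr)$-type inequality (with the measure $(t + (1-t)T') ds$ and $f(s) = \H^{n-1}(A(s))$, $g(s) = \H^{n-1}(B(T(s)))$) has a quantitative remainder. The cleanest way to extract it is to write everything in terms of the single variable and the densities $\rho_A$, $\rho_B \circ T$: the strict convexity of $r \mapsto r^{-1}$ (or of the relevant power) with second derivative bounded below on the relevant range — exactly as in the proof of Lemma \ref{lem:pi close} — forces $\frac{\tau}{C}\int_\R \bigl|\frac{\rho_A(s)}{\rho_B(T(s))} - 1\bigr|^2 \rho_A(s)\,ds \leq C\delta$, and then Cauchy--Schwarz upgrades the $L^2$-type bound to the $L^1$-type bound \eqref{induction:key} with constant $C(n)\tau^{-n/2}$ (the extra powers of $\tau$ and $n$ coming from the normalization $|A|,|B| \approx 1$ and from bounding the total mass). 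Here one must be slightly careful that the ``suitable smallness'' of $\delta/\tau^n$ guarantees the relevant densities stay in a range where the convexity estimate is uniform.

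The main obstacle I anticipate is the quantitative extraction in \eqref{induction:key}: isolating from the single inequality \eqref{eq:BM n n-1} the \emph{particular} piece of deficit that measures non-proportionality of the two slice-volume profiles, and doing so with the clean dependence $C(n)\tau^{-n/2}\delta^{1/2}$. The inequality \eqref{eq:BM n n-1} bundles together two sources of deficit — the pointwise $(n-1)$-dimensional Brunn--Minkowski deficit $e_{n-1}(s) \geq 0$, and the one-dimensional Hölder-type deficit in integrating the power means — and one needs a version of the Borell--Brascamp--Lieb argument that keeps the second remainder \emph{explicitly}, rather than just asserting $\geq$. I would handle this by not quoting Borell--Brascamp--Lieb as a black box but instead running its one-variable proof (optimal transport on the line, sending $\rho_A$ to $\rho_B$) and tracking the convexity defect, which is where the factor bounded below by $\tau$ enters, precisely mirroring the computation already carried out in the proof of Lemma \ref{lem:pi close} above. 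A secondary, more routine, point is verifying that $T_t$ is a bi-Lipschitz-enough change of variables and that all the Fubini manipulations are justified for compact $A, B$ (so that the slices $A(s)$, $B(s)$ are measurable for a.e.\ $s$ and the profiles are integrable); this is standard but should be stated.
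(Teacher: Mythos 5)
Your overall strategy for \eqref{eq:BM n n-1} coincides with the paper's: slice by horizontal hyperplanes, take the $1$-d monotone map $T$ transporting $\rho_A$ to $\rho_B$, change variables through $T_t$ (one should note explicitly that since $T_t$ is only monotone, its pointwise derivative is bounded above by its distributional derivative, so the substitution gives an inequality $\ge$ rather than an equality), use the inclusion $S(T_t(s)) \supset tA(s)+(1-t)B(T(s))$ together with $(n-1)$-dimensional Brunn--Minkowski, and finish with an AM--GM type step in the one remaining variable. The only cosmetic difference is that the paper does not invoke $1$-d Borell--Brascamp--Lieb as a black box: writing the integrand as $|A|\,t^n\,\prod_{i=1}^n(1+\mu_i(s))\,\rho_A(s)$ with $\mu_1=\cdots=\mu_{n-1}$ and $\mu_n$ as in \eqref{eq:mu i}, one observes that $\prod_i \mu_i(s)=\bigl(\tfrac{1-t}{t}\bigr)^n|B|/|A|$ is independent of $s$, so a pointwise AM--GM plus $\int\rho_A=1$ gives the first-term bound immediately.

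The genuine gap is in your extraction of \eqref{induction:key}. You assert that strict convexity, exactly as in the proof of Lemma \ref{lem:pi close}, forces
\[
\frac{\tau}{C}\int_\R \Bigl|\frac{\rho_A(s)}{\rho_B(T(s))}-1\Bigr|^2\rho_A(s)\,ds \le C\,\delta.
\]
This does not follow. If $\gamma(s):=\rho_A(s)/\rho_B(T(s))$ and you write the $1$-d deficit as $\int[\Phi(\gamma(s))-\Phi(\gamma_\ast)]\rho_A(s)\,ds$, then $\Phi(\gamma)=\prod_i(1+\mu_i)$ grows only \emph{linearly} as $\gamma\to\infty$ (the $\mu_n\propto\gamma$ term dominates while the others tend to a constant), so $\Phi(\gamma)-\Phi(\gamma_\ast)\gtrsim \tau|\gamma-\gamma_\ast|^2$ fails for large $\gamma$. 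Your caveat that ``smallness of $\delta/\tau^n$ keeps the densities in a range where the convexity is uniform'' does not repair this: a bound on $\int\Phi(\gamma)\rho_A$ gives no pointwise control on $\gamma$, so there is no a priori compact range. What actually holds is the quantitative AM--GM from \cite{fmpK}, \cite{fmpBM}:
\[
\prod_{i=1}^n(1+\mu_i)\ \ge\ \Bigl(1+\bigl(\textstyle\prod_i\mu_i\bigr)^{1/n}\Bigr)^n+\frac{c(n)}{\max_i\mu_i}\sum_{j=1}^n\Bigl(\mu_j-\bigl(\textstyle\prod_i\mu_i\bigr)^{1/n}\Bigr)^2,
\]
whose crucial feature is the degrading weight $1/\max_i\mu_i$, precisely matching the linear (not quadratic) growth of the deficit. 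This yields only a \emph{weighted} $L^2$ bound; Cauchy--Schwarz then produces an estimate of the form $X\lesssim \tau^{-n/2}\delta^{1/2}\bigl(\sqrt{X}+1\bigr)$ where $X$ is the target $L^1$ quantity, and it is at \emph{this} self-improvement step -- not in pre-bounding $\gamma$ -- that the smallness of $\delta/\tau^n$ is used. Without the weight and the ensuing bootstrap, \eqref{induction:key} does not follow from the convexity heuristic you describe.
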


Finally, we have a lemma saying that if a function $\psi$ is nearly
concave on a large set, and most of its level sets are close to their
convex hulls, then it is $L^1$-close to a concave function.
Here and in the sequel, given a set $E$ we will use $\co(E)$
to denote its convex hull.
%end color blue}
\begin{lemma}
\label{lemma:concavity}  Let $0 < \tau \le1/2$ and fix $t'$ such that 
$1/2 \le t' \le 1-\tau/2$.  Let $t'' = 1-t'$, and for all 
$y_1$ and $y_2$ in $\R^{n-1}$ define
\[
y_{12}' := t'y_1 + t'' y_2; \quad y_{12}'' := t'' y_1 + t' y_2.
\]
Let $\sigma,\varsigma>0$, $\hat M \geq 1$, $F\subset \R^{n-1}$, and
let $\psi:F\to \R$ be a function satisfying
\begin{equation}
\label{eq:ab}
\psi(y_1)+\psi(y_2)\leq \psi(y_{12}')+\psi(y_{12}'')+ \sigma \qquad \forall\,y_1,y_2,y_{12}',y_{12}'' \in F,
\end{equation}
\begin{equation}
\label{eq:coF}
\Omega:=\co(F),\qquad
 \H^{n-1}(\Omega\setminus F) \leq
\varsigma,
\end{equation}
\begin{equation}
\label{eq:Ks ball 2}
B_r\subset \Omega \subset B_{(n-1)r},\qquad 1/n <r< n,
\end{equation}
\begin{equation}
\label{eq:abM}
-\hat M\leq \psi(y) \leq \hat M\qquad \forall\,y \in F.
\end{equation}
Also, we assume that there exists a set $H\subset \R$ such that
\begin{equation}
\label{eq:level sets co}
\int_{H} \H^{n-1}\bigl(\co(\{\psi>s\})\setminus \{\psi>s\}\bigr)\,ds + \int_{\R\setminus H}\H^{n-1}\bigl(\{\psi>s\}\bigr)\,ds\leq \varsigma.
\end{equation}
Then there exist a concave function $\Psi:\Omega\to [-2\hat M,2\hat M]$ and a dimensional constant $L_n'$ such that  
\begin{equation}
\label{eq:almost concave}
\int_{F} |\Psi(y)-\psi(y)|\,dy \leq \tau^{-L_n'}\,\hat M \,(\sigma+\varsigma)^{\beta_{n,\tau}},
\end{equation}
where 
\[
\beta_{n,\tau}:= \frac{\tau}{16(n-1)|\log \tau|}
\]
\end{lemma}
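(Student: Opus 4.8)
\textbf{Proof proposal for Lemma \ref{lemma:concavity}.}

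The plan is to build the concave function $\Psi$ by integrating a controlled family of concave ``layer functions'' obtained from the level sets $\{\psi>s\}$, and then to estimate the $L^1$-error using a combination of the 4-point inequality \eqref{eq:ab} and the level-set hypothesis \eqref{eq:level sets co}. First I would recall (via Remark \ref{rmk:4pts}, or an elementary iteration of \eqref{eq:ab}) that the $4$-point inequality with parameters $t',t''$ forces $\psi$ to be, up to an additive error comparable to a fixed power of $\sigma$, ``almost concave'' along arbitrary segments after removing a small exceptional set; the combinatorial point is that iterating \eqref{eq:ab} on dyadic subdivisions of a segment propagates the defect $\sigma$ with a geometric loss depending only on $\tau$, so one gets something like $\psi(ty_1+(1-t)y_2) \ge t\psi(y_1)+(1-t)\psi(y_2) - C(\tau)\sigma^{c}$ for most pairs and most $t$. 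Combined with the boundedness \eqref{eq:abM} and the measure bound \eqref{eq:coF} on $\Omega \setminus F$, a Fubini/averaging argument shows that for most levels $s \in [-\hat M,\hat M]$ the superlevel set $\{\psi > s\}$ is ``almost convex along segments,'' which together with \eqref{eq:level sets co} says its convex hull exceeds it by only a small measure.

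Next I would pass from the level sets to a concave function. The natural candidate is
\[
\Psi(y) := \sup\Bigl\{ s : y \in \co\bigl(\{\psi > s\}\bigr) \setminus (\text{bad levels})\Bigr\},
\]
or, more robustly, a suitable regularization of $s \mapsto \co(\{\psi>s\})$: since superlevel sets of a concave function are convex and nested, one wants to replace the (possibly non-nested, non-convex) family $\{\psi>s\}$ by the nested family of convex sets $C_s := \bigcap_{s' < s} \co(\{\psi > s'\})$ and define $\Psi$ to have exactly these as superlevel sets. Concavity of $\Psi$ is then automatic from convexity and monotonicity of $C_s$; the range bound $\Psi \in [-2\hat M, 2\hat M]$ follows from \eqref{eq:abM} with a harmless factor-$2$ slack absorbed in the definition. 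The content is the error estimate: writing both $\int \psi$ and $\int \Psi$ via the layer-cake formula over $s$, the difference $\int_F |\Psi - \psi|$ is controlled by $\int_{-\hat M}^{\hat M} \H^{n-1}\bigl( C_s \,\Delta\, (\{\psi>s\}\cap F) \bigr)\, ds$ (up to the contribution of levels outside $[-\hat M,\hat M]$, handled by \eqref{eq:abM} and \eqref{eq:level sets co}), and this in turn splits into (i) $\H^{n-1}(\Omega \setminus F) \le \varsigma$ from \eqref{eq:coF}, (ii) $\int_H \H^{n-1}(\co(\{\psi>s\}) \setminus \{\psi>s\})\,ds + \int_{\R \setminus H}\H^{n-1}(\{\psi>s\})\,ds \le \varsigma$ from \eqref{eq:level sets co}, and (iii) the discrepancy $\int \H^{n-1}(\co(\{\psi>s\}) \setminus C_s)\,ds$ coming from enforcing nestedness, which is where the $4$-point inequality \eqref{eq:ab} enters to show that the sets $\co(\{\psi>s\})$ are already ``almost nested'' up to a power of $\sigma$. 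The geometric normalization \eqref{eq:Ks ball 2} is used to convert measure-smallness of $\Omega \setminus F$ and of hull-deficits into the quantitative closeness needed, via a John-type argument controlling how much a convex body can shrink when a set of small measure is removed.

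The main obstacle, and the reason the exponent $\beta_{n,\tau}$ degrades like $\tau/((n-1)|\log\tau|)$, is step (iii) together with the interpolation between ``$\psi$ almost concave on most of a segment'' and ``$\psi$ genuinely close in $L^1$ to something concave.'' The difficulty is that the bad set where \eqref{eq:ab} fails to give good information can, a priori, be arranged so that on each individual segment through $\Omega$ it blocks the concavity comparison; one must average over a large family of segments (using that $\Omega$ contains and is contained in comparable balls, \eqref{eq:Ks ball 2}) and argue that the bad set cannot obstruct a definite fraction of segments in every direction unless it has measure much larger than $\varsigma$. Quantifying this — choosing how many dyadic subdivision steps to take (more steps give a better concavity defect but amplify the measure of the exceptional set), and balancing the resulting two error terms — is exactly what produces the logarithmic loss and the $(n-1)$ in the denominator. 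I would expect the bulk of Section \ref{sect:proofs} devoted to this lemma to be a careful bookkeeping of these two competing errors, modeled on but more delicate than Step 4 of the proof of \cite[Theorem 1.2]{fjA}.
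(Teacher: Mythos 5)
Your construction of $\Psi$ has a fundamental gap: a function whose superlevel sets are convex and nested is only \emph{quasi-concave}, not concave. Taking $C_s := \bigcap_{s'<s}\co(\{\psi>s'\})$ and defining $\Psi$ so that $\{\Psi>s\}=C_s$ does automatically give a function with convex, nested superlevel sets, but that is a strictly weaker property than concavity (consider $y\mapsto e^{-|y|^2}$, whose superlevel sets are balls but which fails to be concave for $|y|>1/\sqrt2$). So the $\Psi$ you propose need not belong to the class required by the conclusion, and the claim that ``concavity of $\Psi$ is automatic from convexity and monotonicity of $C_s$'' is false. Relatedly, the term you isolate as ``discrepancy from enforcing nestedness'' is vacuous: since $\{\psi>s\}$ is already monotone in $s$, so are the hulls $\co(\{\psi>s\})$, so there is no nestedness to enforce and the 4-point inequality is not needed for that.

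The paper avoids this by a genuinely different mechanism. First it perturbs $\psi$ by subtracting $20(\sigma+\varsigma)^\beta|y|^2$, so that the 4-point inequality becomes \emph{strict} for pairs separated by more than $(\sigma+\varsigma)^\beta/\sqrt\tau$. It then truncates at a carefully chosen level $h$ and takes the honest \emph{concave envelope} $\Phi$ — which is concave by construction, unlike your $\Psi$. The 4-point inequality is then used not for level sets but to show that the contact set $\{\Phi=\bar\var\}$ is $K(\sigma+\varsigma)^\beta$-dense in $\Omega$ away from the top level: one iterates the 4-point inequality $N\sim\log(1/\varsigma)/|\log\tau|$ times along a carefully constructed sequence of points (the set $H_N$), balancing the geometric error $C(1-\tau/2)^N$ of walking toward a contact point against the accumulated $\sigma$-errors, which is where the $\tau/((n-1)|\log\tau|)$ exponent actually comes from. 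Your level-set function (the paper calls it $\xi$) does appear, but only as an \emph{intermediate} comparison object squeezed between $\bar\psi$ and the concave envelope $\Phi$; the hypothesis \eqref{eq:level sets co} controls $\|\xi-\bar\psi\|_{L^1}$, and the density of contact points controls $\|\Phi-\xi\|_{L^1}$. Without the concave envelope and the contact-density argument, there is no concave output function and no way to close the estimate.
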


\section{Proof of Theorem \ref{thm:main}}\label{sec:proof}

As explained in \cite{fjA}, by inner approximation\footnote{The approximation
of $A$ (and analogously for $B$) is by a sequence of compact sets $A_k\subset A$ such that
$|A_k| \to |A|$ and $|\co(A_k)| \to |\co(A)|$. One way to construct such sets is to define $A_k := A_k' \cup V_k$, where $A_k'\subset A$ are compact sets
satisfying $|A_k'| \to |A|$, and $V_k\subset V_{k+1}\subset A$ are finite sets satisfying $|\co(V_k)| \to |\co(A)|$.} 
it suffices to prove the result when $A,B$
are compact sets. Hence, let  $A$ and $B$ be compact sets,
define $S:=tA+(1-t)B$ for some $t \in [\tau,1-\tau]$,
and assume that \eqref{eq:measures} holds.
We want to prove that there exists a convex set $\K$ such that, up to a translation,
$$
A,B\subset \K,\qquad |\K\setminus A|+|\K\setminus B|\leq 
\tau^{-N_n} \d^{\e_n(\tau)}.
$$
In order to simplify the notation, $C$ will denote a generic constant, 
which may change from line to line, and that is bounded from above by $\tau^{-N_n}$
for some dimensional constant $N_n>1$ (recall that by assumption $\tau\leq 1/2$).
We will say that such a constant is {\it universal}.

Observe that, since the statement and the conclusions are invariant under 
measure preserving affine transformations,
by Lemma \ref{lem:rel meas}  we can assume that $A$ and $B$ are 
($M,\tau$)-normalized (see \eqref{eq:normalized}).

\subsection{The case $A=A^\natural$ and $B=B^\natural$}
\label{sect:natural} 
Let $A,B\subset \R^n$ be compact sets satisfying $A=A^\natural$, $B=B^\natural$.
Since
$$
\pi\bigl(A(s)\bigr) \subset \pi\bigl(A(0)\bigr) = \pi(A) 
\quad \text{and} \quad \pi\bigl(B(s)\bigr) \subset \pi\bigl(B(0)\bigr) = \pi(B) 
\quad \text{are disks centered at the origin},
$$
applying Lemma \ref{lem:pi close} we deduce that
\begin{equation}
\label{eq:pi close}
\H^{n-1}\bigl(\pi(A)\Delta\pi(B)\bigr)\leq C\,\delta^{1/2}.
\end{equation}
Hence, if we define
$$
\bar S:=\bigcup_{y \in \pi(A)\cap \pi(B)}tA_y+(1-t)B_y,
$$
then $\bar S_y\subset S_y$ for all $y \in \R^{n-1}$.
In addition, using \eqref{eq:measures}, \eqref{eq:normalized}, and \eqref{eq:pi close}, 
we have
\begin{align*}
1+\delta
&\geq |S|=\int_{\R^{n-1}}\H^{1}(S_y)\,dy \geq \int_{\pi(A)\cap \pi(B)} \H^{1}(S_y)\,dy \geq \int_{\pi(A)\cap \pi(B)} \H^{1}(\bar S_y)\,dy\\
&=|\bar S| 
\geq 
 t\int_{\pi(A)\cap \pi(B)}  \H^{1}(A_y)\,dy
+(1-t)\int_{\pi(A)\cap \pi(B)}  \H^{1}(B_y)\,dy\\
&\geq \frac{t|A|+(1-t)|B|}{2} - C\, \H^{n-1}\bigl(\pi(A)\Delta\pi(B)\bigr)
\geq 1-C\,\delta^{1/2},
\end{align*}
which implies  (since $\bar S\subset S$)
\begin{equation}
\label{eq:SbarSnatural}
|S\setminus \bar S| \leq C\,\delta^{1/2}.
\end{equation}
Also, by Chebyshev's inequality we deduce that there exists a set $F\subset \pi(A)\cap\pi(B)$ such that
$$
\H^{n-1}\bigl((\pi(A)\cap\pi(B))\setminus F\bigr) \leq C\,\delta^{1/4},\qquad \H^{1}\bigl(S_y\setminus \bar S_y\bigr) \leq \delta^{1/4} \quad \forall\,y \in F.
$$
This implies that, if we write
$$
A_y:=\{y\}\times [-a(y),a(y)] \quad \text{and}\quad B_y:=\{y\}\times[-b(y),b(y)],
$$
with $a$ and $b$ radial decreasing, then
$$
ta(y')+(1-t)b(y'') \leq [ta+(1-t)b](y)+\delta^{1/4} \qquad \forall\,y=ty'+(1-t)y'',\,\, y,y',y'' \in F.
$$

We show next that a three-point inequality for two functions $f$ and $g$ 
implies a four-point inequality for each of $f$ and $g$ separately.\\
\begin{remark}
\label{rmk:4pts}
Let $F\subset \R^{n-1}$, and $f,g:F\to \R$ be two bounded Borel functions satisfying
$$
tf(y')+(1-t)g(y'') \leq [tf+(1-t)g](y)+\sigma \qquad \forall\,y=ty'+(1-t)y'',\,\, y,y',y'' \in F,
$$
for some $\sigma\geq 0$.  Let $t\in [\tau, 1-\tau]$ and define
\begin{equation}
\label{eq:t'}
t':=\frac{1}{2-t},\quad t'':= 1 -t';\qquad
y_{12}':=t'y_1+(1-t')y_2, \qquad y_{12}'':=t''y_1+(1-t'')y_2
\end{equation}
We claim that 
\begin{equation}
\label{eq:4pts}
f(y_1)+f(y_2)\leq f(y_{12}')+f(y_{12}'')+\frac2t \sigma.
\end{equation}
(The analogous statement for $g$ involves replacing $t$ with $1-t$,
so gives different values of $t'$ and $t''$.) 
Notice that, if $\tau \le t\le 1/2$, then 
\begin{equation}
\label{eq:tau12}
1/2 \le t' \le 2/3
\end{equation}
independent of $\tau$, whereas if $1/2 \le t \le 1-\tau$, then
\begin{equation}
\label{eq:tau12'}
2/3 \le t' \le 1 -\tau/2.
\end{equation}
\end{remark}
To prove \eqref{eq:4pts}, note that the definitions above imply
\[
y_{12}'=ty_1+(1-t)y_{12}'', \quad y_{12}''=ty_{12}'+(1-t)y_2.
\]
Hence, assuming that $y_1,y_2,y_{12}',y_{12}'' \in F$, we can add together
the two inequalities
$$
tf(y_1)+(1-t)g(y_{12}'') \leq [tf+(1-t)g](y_{12}')+\sigma,
$$
$$
tf(y_2)+(1-t)g(y_{12}') \leq [tf+(1-t)g](y_{12}'')+\sigma,
$$
to get \eqref{eq:4pts}.
\\

By the remark above and Lemma \ref{lemma:concavity} (notice that the level sets of $a$ and $b$ are both disks, so \eqref{eq:level sets co} holds with $\varsigma=0$),
we obtain that both functions $a$ and $b$ are $L^1$-close to concave functions $\Psi_A$ and $\Psi_B$, both defined on $\pi(A)\cap\pi(B)$.
Hence, if we define the convex sets
$$
K_A:=\bigl\{(y,s) \in \R^n\,:\,y\in \pi(A)\cap\pi(B),\ -\Psi_A(y) \leq s \leq \Psi_A(y)\bigr\},
$$
$$
K_B:=\bigl\{(y,s) \in \R^n\,:\,y\in \pi(A)\cap\pi(B),\ -\Psi_B(y) \leq s \leq \Psi_B(y)\bigr\},
$$
we deduce that
$$
|A\Delta K_A| +|B\Delta K_B|\le C\,\delta^{\beta_{n,\tau}/4}.
$$
Hence, it follows from Proposition \ref{prop:coS} that,
up to a translation, there exists a convex set $K$ such that $A\cup B\subset K$ and 
\begin{equation}
\label{eq:ABKnatural}
|A\Delta B|\leq C\,\delta^{\beta_{n,\tau}/8n}, \qquad |K\setminus A|+|K\setminus B|\leq C\,\delta^{\beta_{n,\tau}/8n^3}.
\end{equation}
Notice that, because $A=A^\natural$ and $B=B^\natural$, it is easy to check that the above properties still hold with $K^\natural$ in place of $K$. Hence, in this case, without loss of generality
one can assume that $K=K^\natural$.

\subsection{The general case}
\label{sect:general proof}
Since the result is true when $n=1$ (by Theorem \ref{thm:n1}), we assume that we already proved Theorem \ref{thm:main} through $n-1$, and we want to show its validity for $n$.

\subsubsection*{Step 1: There exist a dimensional constant $\zeta >0$ and $\bar\lambda \sim \d^\zeta$ such that the inductive hypothesis applies 
to $\A(\bar\lambda)$ and $\B(\bar \lambda)$.}
Let $A^\natural$ and $B^\natural$ be as in Definition \ref{def:symm}.
Thanks to Lemma \ref{lem:symm}, $A^\natural$
and $B^\natural$ still satisfy \eqref{eq:measures},
so we can apply the result proved in Section \ref{sect:natural} above to get (see \eqref{eq:ABKnatural})
\begin{equation}
\label{eq:ABsharp}
\int_{\R^{n-1}} \bigl|\H^1\bigl(A^\natural_y \bigr)
-\H^1\bigl(B^\natural_y \bigr)\bigr|\,dy \leq
\int_{\R^{n-1}} \bigl|\H^1\bigl(A^\natural_y \Delta B^\natural_y \bigr)\bigr|\,dy=|A^\natural\Delta B^\natural| \leq C\,\delta^{\bar\a}
\end{equation}
and 
\begin{equation}
\label{eq:ABsharpK}
K\supset A^\natural\cup B^\natural,\qquad |K\setminus A^\natural|+|K\setminus B^\natural| \leq C\,\delta^{\bar\a/n^2}
\end{equation}
for some convex set $K=K^\natural$,
where 
\begin{equation}
\label{eq:beta1}
\bar\a:=\frac{\beta_{n,\tau}}{8n}.
\end{equation}
In addition, because $A$ and $B$ are ($M,\tau$)-normalized (see \eqref{eq:normalized}), so are $A^\natural$ and $B^\natural$,
and by \eqref{eq:ABsharpK} we deduce that there exists a universal constant $R>0$ such that
\begin{equation}
\label{eq:KBR}
K\subset B_{R}.
\end{equation}
Also, by \eqref{eq:ABsharp} and Chebyshev's inequality we obtain that, up to a set of measure $\leq C\,\delta^{\bar\a/2}$, 
$$
\bigl|\H^1\bigl(A^\natural_y \bigr)
-\H^1\bigl(B^\natural_y \bigr)\bigr| \leq \delta^{\bar\a/2}.
$$
Thus, recalling Lemma \ref{lem:symm}, for almost every $\lambda>0$
$$
\H^{n-1}\bigl(\A(\lambda) \bigr)
=\H^{n-1}\bigl(\A^\natural(\lambda) \bigr) \leq
\H^{n-1}\bigl(\B^\natural(\lambda -\delta^{\bar\a/2} ) \bigr)+C\,\delta^{\bar\a/2}= \H^{n-1}\bigl(\B(\lambda -\delta^{\bar\a/2} ) \bigr)+C\,\delta^{\bar\a/2}.
$$
Since, by \eqref{eq:normalized},
$$
\int_0^{\tau^{-2n}M} \Bigl(\H^{n-1}\bigl(\B(\lambda) \bigr)- \H^{n-1}\bigl(\B(\lambda+\delta^{\bar\a/2}) \bigr)\Bigr)\,d\lambda=\int_0^{\delta^{\bar\a/2}}\H^{n-1}\bigl(\B(\lambda) \bigr) \,d\lambda 
\leq C\,\delta^{\bar\a/2},
$$
by Chebyshev's inequality we deduce that 
$$
\H^{n-1}\bigl(\A(\lambda) \bigr) \leq \H^{n-1}\bigl(\B(\lambda) \bigr)+C\,\delta^{\bar\a/4}
$$
for all $\lambda$ outside a set of measure $\delta^{\bar\a/4}$. 
Exchanging the roles of $A$ and $B$ we obtain that there exists a set $G\subset [0,\tau^{-2n}M]$
such that 
\begin{equation}
\label{eq:E}
\H^1(G)\leq C\,\delta^{\bar\a/4},\qquad \bigl| \H^{n-1}\bigl(\A(\lambda) \bigr) -\H^{n-1}\bigl(\B(\lambda) \bigr) \bigr| \leq C\,\delta^{\bar\a/4} \quad \forall\, \lambda\in [0,\infty]\setminus G.
\end{equation}
Using the elementary inequality
$$
\Bigl(ta+(1-t)b\Bigr)^{n-1}\geq ta^{n-1}+(1-t)b^{n-1} - C|a-b|^2 \qquad \forall\,0 \leq a,b \leq \frac{M}{\tau^{2n}},
$$
and replacing $a$ and $b$ with $a^{1/(n-1)}$ and $b^{1/(n-1)}$, respectively, we get
\begin{equation}
\label{eq:ab conc}
\Bigl(ta^{1/(n-1)}+(1-t)b^{1/(n-1)}\Bigr)^{n-1}\geq ta+(1-t)b - C|a-b|^{2/(n-1)} \qquad \forall\,0 \leq a,b \leq \frac{M}{\tau^{2n}}
\end{equation}
(notice that $|a^{1/(n-1)}-b^{1/(n-1)}| \leq |a-b|^{1/(n-1)}$).
Finally, it is easy to check that
$$
t\A(\l)+(1-t)\B(\l) \subset \S(\l)\qquad\forall\,\l>0.
$$
Hence, by the Brunn-Minkowski inequality \eqref{eq:BM} applied to $\A(\l)$ and $\B(\l)$,
using \eqref{eq:measures}, \eqref{eq:normalized}, \eqref{eq:ab conc}, and \eqref{eq:E}, we get
\begin{equation}
\label{eq:BMn-1}
\begin{split}
1+\delta \geq |S|&= \int_0^{\tau^{-2n}M} \H^{n-1}\bigl(\S(\lambda)\bigr) \,d\lambda\\
&\geq \int_0^{\tau^{-2n}M}  \Bigl(t\H^{n-1}\bigl(\A(\lambda)\bigr)^{1/(n-1)}
+(1-t)\H^{n-1}\bigl(\B(\lambda)\bigr)^{1/(n-1)}\Bigr)^{n-1} \,d\lambda\\
&\geq \int_0^{\tau^{-2n}M}  \Bigl(t\H^{n-1}\bigl(\A(\lambda)\bigr)
+(1-t)\H^{n-1}\bigl(\B(\lambda)\bigr)\Bigr) \,d\lambda\\
&\qquad - C \int_0^{\tau^{-2n}M} \bigl| \H^{n-1}\bigl(\A(\lambda) \bigr) -\H^{n-1}\bigl(\B(\lambda) \bigr) \bigr|^{2/(n-1)}\,d\lambda\\
&=t|A|+(1-t)|B| - C\,\delta^{\bar\a/[2(n-1)]}\\
&\geq 1-C\,\delta^{\bar\a/[2(n-1)]}.
\end{split}
\end{equation}
We also observe that, since $K=K^\natural$,
by Lemma \ref{lem:symm}, \eqref{eq:KBR},
and \cite[Lemma 4.3]{christ2},
for almost every $\lambda>0$ we have
\begin{equation}
\label{eq:AAlambda}
\begin{split}
\bigl|A\setminus \pi^{-1}\bigl(\A(\lambda)\bigr)\bigr|&
= \bigl|A^\natural\setminus \pi^{-1}\bigl(\A^\natural(\lambda)\bigr) \bigr|\\
&\leq \bigl|K\setminus \pi^{-1}\bigl(\K(\lambda)\bigr)\bigr|+ M\,\H^{n-1}\bigl(\A^\natural(\lambda)\Delta \K(\lambda)\bigr)\\
& \leq C \lambda^2 +M\,\H^{n-1}(\A^\natural(\lambda)\Delta \K(\lambda)),
\end{split}
\end{equation}
and analogously for $B$.
Also,  by \eqref{eq:ABsharpK},
\begin{equation}
\label{eq:AKnatural lambda}
\int_0^{\tau^{-2n}M} \Bigl(\H^{n-1}\bigl(\A^\natural(\lambda)\Delta \K(\lambda)\bigr)+\H^{n-1}\bigl(\B^\natural(\lambda)\Delta \K(\lambda)\bigr)\Bigr)\,d\lambda \leq |K\setminus A^\natural| +|K\setminus B^\natural| \leq C\,\delta^{\bar\a/n^2}.
\end{equation}
We set
\begin{equation}
\label{eq:eta}
\eta:=\frac{\bar\a}{n^2},
\end{equation}
and we notice that $\eta \leq 
\min\left\{\frac{\bar\a}{2(n-1)}, \frac{\bar\a}4\right\}$.
\\

Take $\zeta>0$ to be fixed later. Then 
by \eqref{eq:E}, \eqref{eq:BMn-1}, 
\eqref{eq:AAlambda}, \eqref{eq:AKnatural lambda}, and by Chebyshev's inequality, we can find a level
\begin{equation}
\label{eq:barlambda}
\bar \lambda \in \biggl[\frac{10\,\delta^\zeta}{\tau},\frac{20\,\delta^\zeta}{\tau}\biggr]
\end{equation}
such that
\begin{equation}
\label{eq:small deficit}
\H^{n-1}\bigl(\S(\bar \lambda)\bigr)\leq \Bigl(t\H^{n-1}\bigl(\A(\bar \lambda)\bigr)^{1/(n-1)}
+(1-t)\H^{n-1}\bigl(\B(\bar \lambda)\bigr)^{1/(n-1)}\Bigr)^{n-1} +C\,\delta^{\eta-\zeta},
\end{equation}
\begin{equation}
\label{eq:measure outside C}
\bigl|A\setminus \pi^{-1}\bigl(\A(\bar\lambda)\bigr)\bigr|
+\bigl|B\setminus \pi^{-1}\bigl(\B(\bar\lambda)\bigr)\bigr|
 \leq C\Bigl(\delta^{2\zeta} +\delta^{\eta-\zeta}\Bigr),
\end{equation}
\begin{equation}
\label{eq:close measure}
\bigl| \H^{n-1}\bigl(\A(\bar\lambda) \bigr)
-\H^{n-1}\bigl(\B(\bar\lambda) \bigr) \bigr| \leq C\,\delta^{\eta}.
\end{equation}
In addition, from the properties 
$$
\left\{
\begin{array}{l}
\text{$\H^{n-1}\bigl(\A(\lambda)\bigr)\leq \tau^{-2n}M$ for any $\lambda>0$ (see \eqref{eq:normalized})},\\
\text{$\int_0^{\tau^{-2n}M} \H^{n-1}\bigl(\A(\lambda)\bigr)\,d\l=|A|\geq 1-\d$},\\ 
\text{$\lambda \mapsto \H^{n-1}\bigl(\A(\lambda)\bigr)$ is a decreasing function,}
\end{array}
\right.
$$
we deduce that
$$
\frac{\tau^{2n}}{2M} \leq \H^{n-1}\bigl(\A(\lambda)\bigr)\leq  \frac{M}{\tau^{2n}} \qquad \forall\,\l\in \bigl(0,\tau^{2n}(2M)^{-1}\bigr).
$$
The same holds for $B$ and $S$, hence
$$
\H^{n-1}\bigl(\S(\bar \lambda)\bigr),\H^{n-1}\bigl(\A(\bar \lambda)\bigr),
\H^{n-1}\bigl(\B(\bar \lambda)\bigr) \in \bigl[\tau^{2n}(2M)^{-1},\tau^{-2n}M\bigr]
$$
provided $\delta\leq \tau^{-N}$ for some large dimensional constant $N$.
Set $\rho:=1/\H^{n-1}\bigl(\A(\bar\lambda)\bigr)^{1/(n-1)}\in [1/C,C]$, and define
$$
A':=\rho\A(\bar\lambda),\qquad
B':=\rho\B(\bar\lambda),\qquad
S':=\rho\S(\bar\lambda).
$$
By \eqref{eq:small deficit} and \eqref{eq:close measure} we get
$$
\H^{n-1}(A')=1,  \quad \bigl|\H^{n-1}(B')-1\bigr| \leq C\,\delta^{\eta},\quad \H^{n-1}(S') \leq 1+
C\,\delta^{\eta-\zeta}.
$$
while, by \eqref{eq:BM},
$$
\H^{n-1}(S')^{1/(n-1)}\geq t\H^{n-1}(A')^{1/(n-1)}+(1-t)\H^{n-1}(B')^{1/(n-1)}\geq 1-C\,\delta^{\eta},
$$
therefore
$$
\bigl|\H^{n-1}(A')-1\bigr|+\bigl|\H^{n-1}(B')-1\bigr|+\bigl|\H^{n-1}(S')-1\bigr|\leq C\,\delta^{\eta-\zeta}.
$$
Thus, by Theorem \ref{thm:main} applied with $n-1$, up to a translation there exists a $(n-1)$-dimensional convex set $\Omega'$ such that 
$$
\Omega'\supset A'\cup B',\qquad \H^{n-1}\bigl( \Omega' \setminus A'\bigr) + 
\H^{n-1}\bigl( \Omega '\setminus B' \bigr) 
\leq C\,\delta^{(\eta-\zeta)\e_{n-1}(\tau)}.
$$
Define $\zeta$ by 
\begin{equation}
\label{eq:zeta}
\zeta: =  \frac{\e_{n-1}(\tau)}{3}\eta,
\end{equation}
and set $\Omega:=\Omega'/\rho$.  Then we obtain (recall that $1/\rho\leq C$ and that $\e_{n-1}(\tau)\leq 1$)
\begin{equation}
\label{eq:induction}
\Omega\supset \A(\bar \lambda)\cup \B(\bar \lambda),\qquad \H^{n-1}\bigl( \Omega \setminus \A(\bar \lambda) \bigr) + 
\H^{n-1}\bigl( \Omega \setminus \B(\bar \lambda) \bigr) 
\leq C\,\d^{2\zeta}.
\end{equation}

\subsubsection*{Step 2: Theorem \ref{thm:n1} applies 
to most of the sets $A_y$ and $B_y$
for $y \in \A(\bar \lambda)\cap \B(\bar \lambda)$.}
Define $\C:=\A(\bar \lambda)\cap \B(\bar \lambda)\subset \S(\bar \lambda)$. By \eqref{eq:measure outside C}, \eqref{eq:induction}, \eqref{eq:normalized},
and \eqref{eq:zeta},
we have  
\begin{equation}
\label{eq:AB minus C}
\begin{split}
|A\setminus \pi^{-1}(\C)|+|B\setminus \pi^{-1}(\C)| &\leq
\bigl|A\setminus \pi^{-1}\bigl(\A(\bar \lambda)\bigr)\bigr|
+\bigl|B\setminus \pi^{-1}\bigl(\B(\bar \lambda)\bigr)\bigr|\\
&\qquad+\int_{\left(\A(\bar \lambda)\right)\setminus \left(\B(\bar\lambda)\right)}\H^1(A_y)\,dy
+\int_{\left(\B(\bar\lambda)\right)\setminus \left(\A(\bar\lambda)\right)}\H^1(B_y)\,dy\\
&\leq C\biggl(\delta^{2\zeta}+\delta^{\eta-\zeta}\biggr) + C\Bigl( \H^{n-1}\bigl( \Omega \setminus \A(\bar \lambda) \bigr) + 
\H^{n-1}\bigl( \Omega \setminus \B(\bar \lambda) \bigr) \Bigr)\\
&\leq C\Bigl(\delta^{2\zeta}+\delta^{\eta-\zeta}\Bigr)\leq C\,\delta^{2\zeta}
\end{split}
\end{equation}
Hence, by \eqref{eq:measures} and \eqref{eq:AB minus C},
\begin{equation}
\label{eq:Fubini}
\begin{split}
\int_{\C}\H^{1}\Bigl(S_y\setminus\bigl(tA_y+(1-t)B_y\bigr)\Bigr)\,dy
& = 
\int_{\C}\bigl[\H^{1}(S_y) - \H^1(tA_y+(1-t)B_y) \bigr]\,dy  \\
& \leq
\int_{\C}  \bigl[\H^{1}(S_y) - t \H^1(A_y) - (1-t)\H^1(B_y) \bigr]\,dy  \\
&= 
|S\cap \pi^{-1}(\C)|-t|A\cap \pi^{-1}(\C)| - (1-t)|B\cap \pi^{-1}(\C)|\\
&\leq |S| - t|A|-(1-t)|B|+ t|A\setminus \pi^{-1}(\C)|+(1-t)|B\setminus \pi^{-1}(\C)|\\
&\leq C\,\delta^{2\zeta}.
\end{split}
\end{equation}
Write $\C$ as $\C_1\cup \C_2$, where
$$
\C_1:=\bigl\{y \in \C: \H^1(S_y)-t\H^1(A_y)-(1-t)\H^1(B_y)\leq \delta^\zeta\bigr\},
\qquad \C_2:=\C\setminus \C_1.
$$
By Chebyshev's inequality
\begin{equation}
\label{eq:C2}
\H^{n-1}\bigl(\C_2\bigr)\leq C\,\delta^\zeta,
\end{equation}
while, recalling \eqref{eq:barlambda},
$$
\min\bigl\{\H^{1}(A_y),\H^1(B_y)\bigr\} \geq \bar \lambda >\delta^\zeta/2\qquad \forall\,y \in \C_1.
$$
Hence, by Theorem \ref{thm:n1} applied to $A_y,B_y\subset\R$ for $y \in \C_1$, we deduce that
\begin{equation}
\label{eq:freiman Ay}
\H^{1}\bigl(\co(A_y)\setminus A_y\bigr)
+\H^{1}\bigl(\co(B_y)\setminus B_y\bigr)
\leq C\,\delta^\zeta 
\end{equation}
(recall that $\co(E)$ denotes the convex hull of a set $E$).
Let $\hat \C_1\subset \C_1$ denote the set of $y \in \C_1$ such that 
\begin{equation}\label{eq:C1'}
\H^{1}\Bigl(S_y\setminus\bigl(tA_y+(1-t)B_y\bigr)\Bigr)\leq \delta^\zeta,
\end{equation}
and notice that, by \eqref{eq:Fubini} and Chebyshev's inequality,
$
\H^{n-1}(\C_1\setminus \hat \C_1)\leq C\,\delta^\zeta
$.
%\begin{equation}
%\label{eq:C1'}
%\H^{1}\Bigl(S_y\setminus\bigl(tA_y+(1-t)B_y\bigr)\Bigr)\leq \delta^\zeta
%\end{equation}
 Then choose a compact set ${\bar\C_1}\subset \hat \C_1$ such that $\H^{n-1}(\hat \C_1\setminus  {\bar\C_1}) \le\delta^\zeta$ to obtain
\begin{equation}
\label{eq:C11'}
\H^{n-1}(\C_1\setminus {\bar\C_1})\leq C \,\delta^\zeta.
\end{equation}
In particular, it follows from \eqref{eq:induction} that
\begin{equation}
\label{eq:C1'omega}
\H^{n-1}(\Omega\setminus {\bar\C_1})\leq C \,\delta^\zeta.
\end{equation}

\subsubsection*{Step 3: There is $\bar S \subset S$ so that $|S \setminus \bar S|$ is small and $\bar S$ is bounded.}
Define the compact sets
\begin{equation}
\label{eq:bar sets}
\bar A:= \bigcup_{y \in {\bar\C_1}} A_y,\quad
\bar B:= \bigcup_{y \in {\bar\C_1}} B_y ,\quad
\bar S:= \bigcup_{y \in {\bar\C_1}} tA_y+(1-t)B_y.
\end{equation}
Note that by \eqref{eq:normalized}, \eqref{eq:AB minus C}, \eqref{eq:C2}, 
\eqref{eq:C11'}, 
\begin{equation} 
\label{eq:barA difference}
|A\Delta \bar A| + |B\Delta \bar B| = 
|A \setminus \pi^{-1}(\bar \C_1)|
+ |B \setminus \pi^{-1}(\bar \C_1)| \le C\delta^{\zeta},
\end{equation}
therefore
\[
|\bar S| = \int_{\bar \C_1} \H^1(t A_y + (1-t)B_y) \, dy 
\ge \int_{\bar \C_1} \bigl[t \H^1( A_y) + (1-t)\H^1(B_y)\bigr] \, dy 
= t|\bar A| + (1-t)|\bar B| \ge 1 - C\d^{\zeta}.
\]
Hence, by \eqref{eq:measures}  (and $\bar S \subset S$),
\begin{equation}
\label{eq:barS difference}
|S \Delta \bar S| \le C \delta^{\zeta}
\end{equation}

Next we show that $\bar S$ is bounded. First recall that 
\begin{equation}
\label{eq:A+B}
S_y=\bigcup_{y=ty'+(1-t)y''} tA_{y'}+(1-t)B_{y''},
\end{equation}
and by \eqref{eq:C1'} we get
\begin{equation}
\label{eq:sum Ay}
\H^1\biggl(\biggl(\bigcup_{y=ty'+(1-t)y''} tA_{y'}+(1-t)B_{y''}\biggr) \setminus tA_{y}+(1-t)B_{y}\biggr) \leq  \delta^\zeta\qquad \forall\,y \in {\bar\C_1}.
\end{equation}
Recalling that $\bar \pi:\R^n \to \R$ is the orthogonal projection onto the last 
component (that is, $\bar\pi(y,s)=s$),
we define the characteristic functions
$$
\chi^A_y(s):=\left\{
\begin{array}{ll}
1 & \text{if }s \in \bar\pi(t A_y)\\
0 & \text{otherwise},
\end{array}
\right.\qquad 
\chi_y^{A,*}(s):=\left\{
\begin{array}{ll}
1 & \text{if }s \in \bar\pi(t\co(A_y))\\
0 & \text{otherwise},
\end{array}
\right.\qquad 
$$
and analogously for $B_y$ (with $1-t$ in place of $t$). Hence, by \eqref{eq:freiman Ay}
we have
the following estimate on the convolution of the functions $\chi_y$ and $\chi_y^\ast$:
\begin{equation}
\label{eq:close Linfty}
\begin{split}
\|\chi_{y'}^{A,*}\ast \chi_{y''}^{B,*} - \chi_{y'}^A\ast \chi_{y''}^B\|_{L^\infty(\R)}&\leq
\|\chi_{{y''}}^{B,*} -  \chi_{{y''}}^B\|_{L^1(\R)} + 
\|\chi_{{y'}}^{A,*} -  \chi_{{y'}}^A\|_{L^1(\R)}\\
& \leq \H^1\bigl(\co(B_{y''})\setminus B_{y''}\bigr)+\H^1\bigl(\co(A_{y'})\setminus A_{y'}\bigr)\\
&< 3\,\delta^\zeta\qquad \forall\,y',y''\in {\bar\C_1}.
\end{split}
\end{equation}
Let us denote by $[a,b]$ the interval $\bar\pi\bigl(t\co(A_{y'})+(1-t)\co(B_{y''})\bigr)$, and
notice that, since by construction
$$
\min\bigl\{t\H^1(A_y),(1-t)\H^1(B_y)\bigr\}\geq \min\{\tau,1-\tau\}\bar\lambda\geq 10\,\delta^\zeta\qquad \forall\,y \in {\bar\C_1}
$$
(see \eqref{eq:barlambda}), this interval has length greater than $20\,\delta^\zeta$. Also,
it is easy to check that
the function $\chi_{{y'}}^\ast\ast \chi_{{y''}}^\ast$ is supported on $[a,b]$, has slope equal to $1$ (resp.~$-1$)
inside $[a,a+3\,\delta^\zeta]$ (resp. $[b-3\,\delta^\zeta,b]$),
and it is greater than $3\,\delta^\zeta$ inside $[a+3\,\delta^\zeta,b-3\,\delta^\zeta]$.
Hence, since $\bar\pi\bigl(tA_{y'}+(1-t)B_{y''}\bigr)$ contains the set $\{\chi_{y'}\ast\chi_{y''}>0\}$, 
by \eqref{eq:close Linfty} we deduce that
\begin{equation}
\label{eq:A*y'}
\bar\pi\bigl(tA_{y'}+(1-t)B_{y''}\bigr) \supset [a+3\,\delta^\zeta, b-3\,\delta^\zeta],
\end{equation}
which implies in particular that
\begin{equation}
\label{eq:A+By}
\H^1\bigl(t\co(A_{y'})+(1-t)\co(B_{y''}) \bigr) \leq \H^1\bigl(tA_{y'}+(1-t)B_{y''}\bigr)+6\,\delta^\zeta\qquad \forall\,y',y''\in {\bar\C_1}.
\end{equation}
\\

We claim that if $y',y'',y=ty'+(1-t)y'' \in {\bar\C_1}$, then
\begin{equation}
\label{eq:y'y''y}
\bar\pi \bigl(t\co(A_{y'})+(1-t)\co(B_{y''})\bigr)\subset [\a_y-16\,\delta^\zeta,\b_y+16\,\delta^\zeta],
\end{equation}
where $[\a_y,\b_y]:=\bar\pi \bigl(t\co(A_{y})+(1-t)\co(B_{y})\bigr)$.

Indeed, if this was false, since 
$
\bar\pi \bigl(t\co(A_{y'})+(1-t)\co(B_{y''})\bigr)=[a,b]$ is an interval of 
length  at least $20\delta^{\zeta} \geq 16\delta^\zeta$, it follows that
$$
\H^1\bigl([a,b] \setminus  [\a_y,\b_y]\bigr)
\geq 16 \delta^\zeta.
$$
This implies that
$$
\H^1\bigl([a+3\d^\zeta, b-3\d^\zeta] \setminus 
 [\a_y,b_y]\bigr) \geq 10\d^\zeta,
$$
so, by \eqref{eq:A*y'},
$$
\H^1\bigl(\bar\pi\bigl(tA_{y'}+(1-t)B_{y''}\bigr) \setminus [\a_y,\b_y]\bigr) \geq 10 \d^\zeta.
$$
However, since $ [\a_y,\b_y]\supset \bar\pi\bigl(tA_y+(1-t)B_y\bigr)$, this contradicts \eqref{eq:sum Ay}
and proves the claim \eqref{eq:y'y''y}. \\

Now, 
if we write 
$$
\co(A_y)=\{y\}\times [a^A(y),b^A(y)],\quad \co(B_y)=\{y\}\times [a^B(y),b^B(y)],\quad \co(\bar S_y)=\{y\}\times [a^{\bar S}(y),b^{\bar S}(y)],
$$
and we denote by $c^A(y):=\frac{a^A(y)+b^A(y)}{2}$ the barycenter of $\co(A_y)$ (and analogously for $B$ and $\bar S$), 
then $b^{\bar S}=tb^A+(1-t)b^B$ and it follows from \eqref{eq:y'y''y} that
\begin{equation}
\label{eq:bS}
tb^A(y')+(1-t)b^B(y'')\leq b^{\bar S}(y)+16\,\delta^\zeta \qquad \forall\,y,y',y'' \in {\bar\C_1},\, 
y=ty'+(1-t)y''
\end{equation}
(and analogously for $a$).
Hence, from the fact that $\H^1\bigl(\co(A_y)\bigr)$ and $\H^1\bigl(\co(B_y)\bigr)$
are universally bounded (see \eqref{eq:normalized} and \eqref{eq:freiman Ay}) one easily deduces that
$$
\bigl| tc^A(y')+(1-t)c^B(y'') - c^{\bar S}(y)\bigr| \leq C \qquad \forall\,y,y',y'' \in {\bar\C_1},\, 
y=ty'+(1-t)y''.
$$
Hence, by Remark \ref{rmk:4pts} in Section \ref{sect:natural} we get that 
$$
\left|c^{A}(y_1)+c^{A}(y_2) -c^{A}(t_A'y_1+(1-t_A')y_2)-c^{A}(t_A''y_1+(1-t_A'')y_2) \right| \leq C
$$
whenever $y_1,y_2,t_A'y_1+(1-t_A')y_2,t_A''y_1+(1-t_A'')y_2 \in {\bar\C_1}$,
with $t_A':=\frac{1}{2-t}$ and $t_A'':=1-t_A'$.\\
As proved in Lemma \ref{lemma:1d}, this estimate in one dimension implies that,
along any segment $[y_1,y_2]$ on which $\bar \C_1$ has large measure
and such that $y_1,y_2,t_A'y_1+(1-t_A')y_2,t_A''y_1+(1-t_A'')y_2 \in {\bar\C_1}$, $c^{A}$ is at bounded distance from a linear function $\ell_A$.
Analogously,
$$
\left|c^{B}(y_1)+c^{B}(y_2) -c^{B}(t_B'y_1+(1-t_B')y_2)-c^{B}(t_B''y_1+(1-t_B'')y_2) \right| \leq C
$$
whenever $y_1,y_2,t_B'y_1+(1-t_B')y_2,t_B''y_1+(1-t_B'')y_2 \in {\bar\C_1}$,
now with $t_B':=\frac{1}{1+t}$ and $t_B'':=1-t'_B$ (recall again Remark \ref{rmk:4pts}),
so $c^B$ is at bounded distance from a linear function $\ell_B$
along any segment $[y_1,y_2]$ on which $\bar \C_1$ has large measure
and such that $y_1,y_2,t_B'y_1+(1-t_B')y_2,t_B''y_1+(1-t_B'')y_2 \in {\bar\C_1}$.
Hence, along any segment 
$[y_1,y_2]$ on which $\bar \C_1$ has large measure
and such that $y_1,y_2,t_A'y_1+(1-t_A')y_2,t_A''y_1+(1-t_A'')y_2,t_B'y_1+(1-t_B')y_2,t_B''y_1+(1-t_B'')y_2 \in {\bar\C_1}$,
$c^{\bar S}$ is at bounded distance from the linear function $\ell(y):=t\ell_A(y)+(1-t)\ell_B(y)$.\\

We now use this information to deduce that, up to an affine transformation of the form
\begin{equation}
\label{eq:affine}
\R^{n-1} \times \R \ni (y,s) \mapsto (Ty,t-Ly)+(y_0,t_0)
\end{equation}
with $T:\R^{n-1}\to \R^{n-1}$,
 $\det(T)=1$, and $(y_0,t_0) \in \R^n$, the set $\bar S$ is universally bounded, say $\bar S\subset B_R$ for some universal constant $R$.
 
 Indeed, first of all, 
 since ${\bar\C_1}$
is almost of full measure inside the convex set $\Omega$ (see \eqref{eq:induction}, \eqref{eq:C2},
and \eqref{eq:C11'}),
 by a simple Fubini argument (see the analogous argument in \cite[Proof of Theorem 1.2, Step 3-b]{fjA}
 for more details)
 we can choose $n$ ``good'' points
 $y_1,\ldots,y_n \in \bar \C_1$ such that:
 \begin{enumerate}
\item[(a)] All points
$$
y_1,\ldots,y_n\quad \text{and} \quad t_A'y_1+(1-t_A')y_2,t_A''y_1+(1-t_A'')y_2,t_B'y_1+(1-t_B')y_2,t_B''y_1+(1-t_B'')y_2
$$
belong to $\bar \C_1$.
\item[(b)] Let $\Sigma_i$, $i=1,\, \dots, n$, denote the $(i-1)$-dimensional 
simplex generated by $y_1,\ldots, y_i$, 
and define
$$
\Sigma_i':=\bigl[t_A'\Sigma_i + (1-t_A')y_{i+1}\bigr]\cup 
\bigl[t_A''\Sigma_i + (1-t_A'')y_{i+1}\bigr]\cup \bigl[t_B'\Sigma_i + (1-t_B')y_{i+1}\bigr]\cup 
\bigl[t_B''\Sigma_i + (1-t_B'')y_{i+1}\bigr] ,
$$
$i=1,\, \dots,\,  n-1$. 
Then
\begin{enumerate}
\item[(i)]
$$
\H^{i-1}(\Sigma_i) \geq c_n,\quad \frac{\H^{i-1}(\Sigma_i\cap \bar \C_1)}{\H^{i-1}(\Sigma_i)} \geq 1-\delta^{\zeta/2}, 
\qquad \forall\,i=2,\ldots,n;
$$
\item[(ii)]
$$
\frac{\H^{i-1}\left(\Sigma_i'\cap \bar \C_1 \right)}{\H^{i-1}\left(\Sigma_i'\right)} \geq 1-\delta^{\zeta/2}\qquad \forall\,i=2,\ldots,n-1.
$$
\end{enumerate}
\end{enumerate}
Then, thanks to John's Lemma \cite{john}, up to an affine transformation of the form \eqref{eq:affine} we can assume that
\begin{equation}
\label{eq:horiz}
B_{r} \subset \Omega\subset B_{(n-1)r},\qquad 1/C_n < r < C_n \text{ with $C_n$
dimensional},
\end{equation}
and 
\begin{equation}
\label{eq:vert}
(y_k,0) \in \bar S, \qquad \forall\, k= 1, \ldots, n \, .
\end{equation}
We then prove that $\bar S$ is universally bounded as follows:
first of all, thanks to \eqref{eq:horiz} we only need to show that $\bar S$
is bounded in the last variable.
Then, by \eqref{eq:vert} and (a) and (b)-(i) above, thanks to Lemma \ref{lemma:1d}
we deduce that $c^{\bar S}$ is universally bounded on $\Sigma_2\cap \bar \C_1$.

One then iterates this construction:
since $c^{\bar S}$
is universally bounded on $\Sigma_2\cap \bar\C_1'$ and at $y_3$, for any point $z \in \Sigma_2\cap \bar\C_1$ such that
$t_A'z+(1-t_A')y_3,t_A''z+(1-t_A'')y_3,t_B'z+(1-t_B')y_3,t_B''z+(1-t_B'')y_2 \in \Sigma_2'\cap \bar\C_1$ (these are  most of the points)
we can apply again Lemma \ref{lemma:1d} 
to deduce that $c^{\bar S}$ is universally bounded on the set $[z,y_3]\cap \bar \C_1$.
An iteration of this argument as in \cite[Proof of Theorem 1.2, Step 3-d]{fjA}
shows that $c^{\bar S}$ is universal bounded on a set $\Sigma_n''$
such that $\H^{n-1}(\Sigma_n\setminus \Sigma_{n}'')\leq C\,\delta^{\zeta/2}$.

We now conclude as in \cite[Proof of Theorem 1.2, Step 3-e]{fjA}:
fix a point $\bar y_1 \in \bar \C_1$. Then we can find another point $\bar y_2
\in \bar \C_1$ such that 
$t_A'\bar y_1+(1-t_A')\bar y_2,t_A''\bar y_1+(1-t_A'')\bar y_2,
t_B'\bar y_1+(1-t_B')\bar y_2,t_B''\bar y_1+(1-t_B'')\bar y_2 \in \bar \C_1$,
most of the points on the segment $[\bar y_1,\bar y_2]$ belong to $\bar \C_1$,
and $\H^1\bigl([\bar y_1,\bar y_2]\cap \Sigma_{n}'' \bigr) \geq c_n'$
for some dimensional constant $c_n'>0$.
Hence, on this segment $c^{\bar S}$ must be at some bounded distance from a linear function
$\ell$, but at the same time we know that $c^{\bar S}$ is universally bounded on 
$[\bar y_1,\bar y_2]\cap \Sigma_{n}''$, so $\ell$ is universally bounded there. Since
this set has non-trivial measure, this implies that $\ell$ has to be universally bounded
on the whole segment $[\bar y_1,\bar y_2]$ (since $\ell$ is a linear function).
Thus $c^{\bar S}$ is universally bounded on $[\bar y_1,\bar y_2]\cap  \bar \C_1$ as well,
and this provides a universal bound for $c^{\bar S}(\bar y_1)$, concluding the proof.

\subsubsection*{Step 4: There are uniformly bounded 
vertically convex sets $A^\sim$ and $B^\sim$ near
$A$ and $B$.}
Let $\bar A$, $\bar B$, and $\bar S$ be as in \eqref{eq:bar sets}, and recall that 
by the previous step there exists a constant $R$ such that
$\bar S\subset \{|x_n|\leq R\}$. Let us apply opposite translations 
along the $e_n$-axis to $t\bar A$ and $(1-t)\bar B$ (see \eqref{eq:bar sets}), i.e.,
$$
t\bar A \mapsto t\bar A+\mu e_n,\qquad (1-t) \bar B \mapsto (1-t)\bar B-\mu e_n,
$$
for some $\mu \in \R$, so that
$\bar A\subset \{x_n \geq -R\}$ and $\bar A\cap \{x_n = -R\}\neq \emptyset$ (recall that $\bar A$ is compact).
This means that
$$
\min_{y \in {\bar\C_1}}a^A(y)=-R.
$$
Notice that, thanks to \eqref{eq:y'y''y},
\begin{equation}
\label{eq:aS}
ta^A(y')+(1-t)a^B(y'')\geq a^{\bar S}(y)-16\,\delta^\zeta \qquad \forall\,y,y',y'' \in {\bar\C_1},\, 
y=ty'+(1-t)y''.
\end{equation}
Let $\bar y \in {\bar\C_1}$ be such that $a^A(\bar y)=-R$, and set
$$
\C_1^-:= {\bar\C_1}\cap \frac{{\bar\C_1}-t\bar y}{1-t}.
$$
Then, since 
$a^A(\bar y)=-R$ and $a^{\bar S}\geq -R$, it follows from \eqref{eq:aS} that
$$
a^B(y'') \geq -R-C\,\delta^\zeta \geq -R-1 \qquad \forall\, y'' \in  \C_1^-.
$$
Define
\[
A^\sim := \bigcup_{y \in \C_1^-} \{y\} \times [a^A(y),b^A(y)],
\qquad 
B^\sim := \bigcup_{y \in \C_1^-} \{y\} \times [a^B(y),b^B(y)].
\]
We have shown that 
\[
A^\sim \cup B^\sim \subset \{x_n \geq -R-1\}.
\]
\\

It remains to prove the upper bounds. 
Note that because $\bar y\in \Omega$ and $\Omega$ is convex, 
it follows from \eqref{eq:C1'omega} that
\[
\H^{n-1}\left(\Omega \setminus \frac{\bar \C_1 - t\bar y}{1-t}\right)
= (1-t)^{1-n} \H^{n-1}\bigl(((1-t)\Omega + t\bar y)\setminus \bar \C_1\bigr)
\le
(1-t)^{1-n} \H^{n-1}(\Omega \setminus \bar \C_1) \le C\,\delta^\zeta.
\]
Therefore, using \eqref{eq:C1'omega} again, we have 
\begin{equation}
\label{eq:meas C1-}
\H^{n-1} (\Omega \setminus \C_1^-) \leq C\,\delta^\zeta.
\end{equation}

We now claim that $A^\sim\cup B^\sim \subset \{x_n \leq \tilde CR\}$ for 
some universal constant $\tilde C$.  
Indeed, if for instance $b^A(\tilde y)\geq \tilde CR$ for some 
$\tilde y \in {\C_1^-}$, then we could use \eqref{eq:bS}
and the fact that $b^B\geq a^B\geq -R-1$ on  $\C_1^-$ to get
$$
b^{\bar S}(y)\geq t\tilde CR-(1-t)(R+1)- 16\,\delta^\zeta \geq \tau \tilde CR-R-2 \qquad \forall\,y \in {\bar\C_1}\cap (t \tilde y + (1-t)\C_1^-),
$$
and since the latter set is nonempty (because of \eqref{eq:C1'omega}, \eqref{eq:meas C1-} and the convexity of $\Omega$) this contradicts the fact that 
$b^{\bar S} \leq R$ provided $\tilde C$ is large enough (the case
$b^B(\tilde y)\geq \tilde CR$ for some $\tilde y \in {\bar\C_1}$ is completely analogous).  Thus, $A^\sim$ and $B^\sim$ are universally bounded. \\

Finally, note that \eqref{eq:normalized}, \eqref{eq:barA difference}, 
\eqref{eq:meas C1-}, and \eqref{eq:freiman Ay} imply
\begin{equation} 
\label{eq:Asim difference}
|A\Delta A^\sim| + |B\Delta B^\sim| \le C\delta^{\zeta}.
\end{equation}

\subsubsection*{Step 5: The inductive hypothesis applies to 
horizontal sections and hence there are convex sets 
close to $A^\sim$ and $B^\sim$.}

The main goal of this section is to show that the hypotheses of Lemma 
\ref{lemma:concavity}
apply to the function $b^A$ (and similarly to $b^B$, $-a^A$, and $-a^B$).  
The fact that $A^\sim$ and $B^\sim$ are close to convex sets will then follow
easily. 

As explained in the outline of the proof in Section \ref{sect:outline},
to be able to apply Lemma \ref{lemma:concavity} we will construct
auxiliary sets $A^-$ and $B^-$ which consist of
the top profile of $A^\sim$ and $B^\sim$ with a flat bottom,
for which the slices coincide with the superlevel sets of $b^A$ and $b^B$,
and we will apply Lemma \ref{lemma:BM n n-1} to such sets.
However, to be able to do this, we must show that $A^-$ and $B^-$
are almost optimal in the Brunn-Minkowski inequality.
\\

As we showed in Step 4, $A^\sim$ and $B^\sim$ are universally bounded, 
so we may choose universal constants $M_A\ge0$ and $M_B\ge0$ such that 
$$
-M_A \leq a^A(y),\quad -M_B \leq a^B(y)\qquad \forall\, y \in \mathcal \C_1^-,
$$
and such that the sets 
\[
A^-:=\bigcup_{y \in \mathcal \C_1^-}\{y\}\times [-M_A, b^A(y)],
\qquad B^-:=\bigcup_{y \in \mathcal \C_1^-} \{y\} \times [-M_B,b^B(y)],
\]
are universally bounded.  We may also adjust the constants $M_A$ and $M_B$ so
that $|A^-| = |B^-|$. 
%\begin{equation}
%\label{eq:equalmeasure A- B-}
%|A^-|=|B^-|
%\end{equation}

Define 
\[
S^-:=t A^-+(1-t)B^-; \quad 
\C^-(y) := \{(y',y'')\in \C^- \times \C^-: ty' + (1-t)y''= y\}.
\]
We estimate the measure of $S^-$ using \eqref{eq:bS} as follows:
\begin{align*}
%\label{eq:S-}
|S^-|
&=\int_{t\C_1^-+(1-t)\C_1^-}
\H^1\biggl(\bigcup_{(y',y'')\in \C^-(y)}  
t[-M_A, b^A(y')]+(1-t)[-M_B, b^B(y'')]\biggr)\,dy\\
&\leq 
\int_{t\C_1^-+(1-t)\C_1^-}
\Bigl(b^{\bar S}(y) + 16 \,\delta^\zeta + t M_A + (1-t)M_B\Bigr) \, dy \\
& 
\leq
\int_{\C_1^-}
\Bigl(b^{\bar S}(y) + t M_A + (1-t)M_B\Bigr) \, dy  + C\,\delta^\zeta,
\end{align*}
where, in the final inequality, we used that
$\H^{n-1}\bigl((t\C_1^-+(1-t)\C_1^-)\setminus \C_1^- \bigr)\leq C\,\delta^\zeta$
(recall that $\C_1^-\subset \Omega$ and $\Omega$ is convex,
thus $t\C_1^-+(1-t)\C_1^-\subset \Omega$ and the bound follows from \eqref{eq:meas C1-})
and that 
$b^{\bar S}$ is universally bounded on $\C_1^-$.  Next, since
$b^{\bar S}=tb^{A}+(1-t)b^B$ and $|A^-| = |B^-|$, it follows that 
\begin{align*}
|S^-| & \leq 
t \int_{\C_1^-}(b^A(y)- M_A) \, dy +
(1-t) \int_{\C_1^-}(b^B(y)- M_B) \, dy + C\,\delta^\zeta \\
&= t |A^-|+(1-t)|B^-| +C\,\delta^\zeta = |A^-|+C\,\delta^\zeta,
\end{align*}
On the other hand \eqref{eq:BM} implies
$$
|S^-|\geq \bigl(t|A^-|^{1/n}+(1-t)|B^-|^{1/n}\bigr)^n=|A^-|.
$$
Hence, in all, we find that 
\begin{equation}
\label{eq:new measure S-}
0 \le  |S^-| - |A^-|  \le C\,\delta^\zeta \quad \mbox{and} \quad |A^-| = |B^-|
\end{equation}
We are now in a position to apply Lemma \ref{lemma:BM n n-1} to $A^-$ and $B^-$
to confirm that hypothesis \eqref{eq:level sets co} of Lemma \ref{lemma:concavity} 
is valid for $b^A$ and $b^B$.\\

Let us recall the notation 
$E(s)\subset \R^{n-1}\times\{s\}$ in \eqref{eq:Ey}. Since $|cA^-|= |cB^-| = 1$
for some universal constant $c>0$, by applying 
\eqref{eq:new measure S-} and Lemma \ref{lemma:BM n n-1}
 to the sets $cA^-$, $cB^-$, and $cS^-$, we find a monotone map 
$T:\R\to \R$ such that
$$
T_\sharp\rho_{A^-}=\rho_{B^-},\qquad \rho_{A^-}(s):=\frac{\H^{n-1}\bigl(A^-(s)\bigr)}{|A^-|},\quad \rho_{B^-}(s):=\frac{\H^{n-1}\bigl(B^-(s)\bigr)}{|B^-|},
$$
\begin{equation}
\label{eq:T'} 
 T'(s)=\frac{\H^{n-1}\bigl(A^-(s)\bigr)|B^-|}{\H^{n-1}\bigl(B^-(T(s))\bigr)|A^-|}\qquad \rho_A\text{-a.e.},
\end{equation}
\begin{equation}
\label{eq:BM A-}
\int_\R e_{n-1}(s)\,[t+(1-t)T'(s)]\,ds
\leq C\,\delta^\zeta,
\end{equation}
and
\begin{equation}
\label{eq:meas slices}
\int_\R \biggl| \frac{\rho_{A^-}(s)}{\rho_{B^-}(T(s))} - 1\biggr|\,\rho_{A^-}(s)\,ds \leq C\,\delta^{\zeta/2},
\end{equation}
where $T_t(s)=ts+(1-t)T(s)$ and
$$
e_{n-1}(s) : = \H^{n-1}\bigl(S^-(T_t(s))\bigr)-
\left[t\H^{n-1}\bigl(A^-(s)\bigr)^{1/(n-1)}+(1-t)\H^{n-1}\bigl(B^-(T(s))\bigr)^{1/(n-1)} \right]^{n-1}.
$$
Let us define the set
\begin{equation}
\label{eq:G}
G:=\biggl\{s \in \R\,:\,e_{n-1}(s)\leq \delta^{\zeta/2}\biggr\},
\end{equation}
and observe that, thanks to \eqref{eq:BM A-}, \eqref{eq:G}, and $T' \ge 0$,
\begin{equation}
\label{eq:meas G}
\H^1(\R\setminus G) \leq \frac{1}{\tau} \int_{\R\setminus G} [t+(1-t)T'(s)]\,ds \leq C\,\delta^{\zeta/2}.
\end{equation}
Next, note that the formula for $T$ (with $A$ and $B$ replaced by $A^-$ and $B^-$)
given in the footnote in the statement of Lemma \ref{lemma:BM n n-1} implies that the distributional 
derivative of $T$ has no singular part on $T^{-1}(\{\rho_{B^-}>0\})$.  Hence,
the area formula gives
$$
\H^1\bigl(\bigl(\R\setminus T(G)\bigr) \cap \{\rho_B>0\}\bigr) =  \int_{\left(\R\setminus G\right) \cap T^{-1}(\{\rho_B>0\})} T'(s)\,ds,
$$
and it follows that 
\begin{equation}
\label{eq:meas T G}
\H^1\bigl(\bigl(\R\setminus T(G)\bigr) \cap \{\rho_{B^-}>0\}\bigr) 
\leq  \int_{\R\setminus G} T'(s)\,ds \leq \frac{1}{\tau} \int_{\R\setminus G} [t+(1-t)T'(s)]\,ds \leq C\,\delta^{\zeta/2}.
\end{equation}
Also, we define
$$
I_{A^-}:=\Bigl\{s \in \R\,:\, \H^{n-1}\bigl(A^-(s)\bigr)>\delta^{\zeta/4}\Bigr\},\qquad
I_{B^-}:=\Bigl\{s \in \R\,:\, ,\,\H^{n-1}\bigl(B^-(s)\bigr)>\delta^{\zeta/4} \Bigr\},
$$
and
$$
I_T:=\biggl\{s \in \R\,:\, \ \frac{2}3 \leq \frac{\rho_{A^-}(s)}{\rho_{B^-}(T(s))} \leq \frac{3}2\biggr\}.
$$
Notice that, thanks to \eqref{eq:meas slices},
$$
\int_{I_T}\rho_{A^-}(s)\,ds \geq 1-C\,\delta^{\zeta/2}.
$$
Also, since $\rho_{A^-}$ and  $\rho_{B^-}$ are probability densities supported inside some bounded interval (being $A^-$ and $B^-$ universally bounded), we have
$$
\int_{I_{A^-}}\rho_{A^-}(s)\,ds= \int_{\{\rho_{A^-}>\delta^{\zeta/4}|A^-|\}}\rho_{A^-}(s)\,ds \geq 1-C\,\delta^{\zeta/4}
$$
and (using the condition $T_\sharp\rho_{A^-}=\rho_{B^-}$)
$$
\int_{T^{-1}(I_{B^-})}\rho_{A^-}(s)\,ds = \int_{I_{B^-}}\rho_{B^-}(s)\,ds =\int_{\{\rho_{B^-}>\delta^{\zeta/4}|B^-|\}}\rho_{B^-}(s)\,ds \geq 1-C\,\delta^{\zeta/4}.
$$
Therefore
\begin{equation}
\label{eq:level sets A-}
\int_I\rho_{A^-}(s)\,ds=\int_{T(I)}\rho_{B^-}(s)\,ds \geq 1-C\,\delta^{\zeta/4},\qquad I:=I_{A^-}\cap T^{-1}(I_{B^-})\cap I_T.
\end{equation}
We now apply the inductive hypothesis to $A^-(s)$, $B^-(T(s))$, $S^-(T_t(s))$:
define
$$
A^s:=\frac{A^-(s)}{H^{n-1}\bigl(A^-(s)\bigr)^{1/(n-1)}},\quad B^s:=\frac{B^-(T(s))}{H^{n-1}\bigl(B^-(T(s))\bigr)^{1/(n-1)}},
$$
$$
S^s:=\frac{S^-(T_t(s))}{tH^{n-1}\bigl(A^-(s)\bigr)^{1/(n-1)}+(1-t)H^{n-1}\bigl(B^-(T(s))\bigr)^{1/(n-1)}},
$$
$$
t^s:=\frac{tH^{n-1}\bigl(A^-(s)\bigr)^{1/(n-1)}}{tH^{n-1}\bigl(A^-(s)\bigr)^{1/(n-1)}+(1-t)H^{n-1}\bigl(B^-(T(s))\bigr)^{1/(n-1)}}.
$$
Then, since 
$\frac{2}3 \leq \frac{\rho_{A^-}(s)}{\rho_{B^-}(T(s))} \leq \frac{3}2$ for $s\in I$, 
and $|A^-| = |B^-|$, it follows that 
$$
t^s \in \biggl[\frac{\tau}{2}, 1- \frac{\tau}{2}\biggr]\qquad \forall\,s \in I.
$$
In addition, recalling the definition of $G$, for any $s \in I\cap G$ we also have
$$
S^s=t^sA^s+(1-t^s)B^s,\qquad \H^{n-1}(A^s)=\H^{n-1}(B^s)=1,\qquad 
\H^{n-1}(S^s) 
\leq 1 + \delta^{\zeta/4}.
$$
Hence, if 
\begin{equation}
\label{eq:cond omega'}
\d^{\zeta/4} \leq e^{-M_{n-1}(\tau/2)},
\end{equation}
then by the inductive hypothesis we deduce the existence of a convex set 
$K^s$ such that, up to a translation (which may depend on $s$) 
$$
K^s\supset A^s\cup B^s,\qquad \H^{n-1}\bigl(K^s\setminus A^s\bigr)+\H^{n-1}\bigl(K^s\setminus B^s\bigr)\leq 
C\,\delta^{\frac\zeta4 \e_{n-1}(\tau/2)}.
$$
Thus, in particular,
$$
\H^{n-1}\bigl(\co(A^s)\setminus A^s\bigr)+ \H^{n-1}\bigl(\co(B^s)\setminus B^s\bigr) \leq \d^{\frac\zeta4 \e_{n-1}(\tau/2)},
$$
which implies that
$$
\H^{n-1}\bigl(\co\bigl(A^-(s)\bigr)\setminus A^-(s)\bigr)+
\H^{n-1}\bigl(\co\bigl(B^-(T(s))\bigr)\setminus B^-(T(s))\bigr)\leq  
\d^{\frac\zeta4 \e_{n-1}(\tau/2)}
\qquad \forall\,s \in I\cap G.
$$
Hence, integrating with respect to $s \in I\cap G$ and using that 
$T'\leq C$ on $I\cap G$ (as a consequence of \eqref{eq:T'} 
and the fact that $I\subset I_T$) we obtain
\begin{equation}
\label{eq:level sets A-2}
\int_{I\cap G} \H^{n-1}\bigl(\co\bigl(A^-(s)\bigr)\setminus A^-(s)\bigr)\,ds\leq  C\,\d^{\frac\zeta4\e_{n-1}(\tau/2)},
\end{equation}
\begin{equation}
\label{eq:level sets B-2}
\begin{split}
\int_{T(I\cap G)} \H^{n-1}\bigl(\co\bigl(B^-(s)\bigr)\setminus B^-(s)\bigr)\,ds&=
\int_{I\cap G} \H^{n-1}\bigl(\co\bigl(B^-(T(s))\bigr)\setminus B^-(T(s))\bigr)\,T'(s)\,ds\\
&\leq  C\,\d^{\frac\zeta4\e_{n-1}(\tau/2)}.
\end{split}
\end{equation}
Also, recalling the definition of $\rho_{A^-}$ and $\rho_{B^-}$, it follows from \eqref{eq:level sets A-}, \eqref{eq:meas G}, \eqref{eq:meas T G}, that
\[
\int_{\R\setminus (I\cap G)} \H^{n-1}(A^-(s))\,ds+
\int_{\R\setminus \left(T(I\cap G)\right)} 
\H^{n-1}(B^-(s)) \,ds\leq  C\,\delta^{\zeta/4}
\]
%\int_{\R\setminus (I\cap G)} \H^{n-1}\bigl(\co\bigl(A^-(s)\bigr)\setminus A^-(s)\bigr)\,ds+
%\int_{\R\setminus \left(T(I\cap G)\right)} \H^{n-1}\bigl(\co\bigl(B^-(s)\bigr)\setminus B^-(s)\bigr)\,ds\leq  C\,\delta^{\zeta/4}
(notice that $B^-(s)=\emptyset$ on $\{\rho_B=0\}$). \\

By the bound above, \eqref{eq:bS}, \eqref{eq:aS}, \eqref{eq:level sets A-},  \eqref{eq:level sets A-2}, \eqref{eq:level sets B-2},
and Remark \ref{rmk:4pts} (see Section \ref{sect:natural}),
we can apply Lemma \ref{lemma:concavity} to $b^A$ 
find a concave function $\Psi^+(y)$ defined on $\Omega$ such that
\[
\int_{\C_1^-} |b^A(y) - \Psi^+(y)| \, dy \le 
C\,\d^{\frac{\zeta\,\beta_{n,\tau}}{4} \e_{n-1}(\tau/2)}.
\]
Similarly, there is a convex function $\Psi^-$ on $\Omega$ such that
\[
\int_{\C_1^-} |a^A(y) - \Psi^-(y)| \, dy \le 
C\,\d^{\frac{\zeta\,\beta_{n,\tau}}{4} \e_{n-1}(\tau/2)},
\]
so the convex set 
\[
K_A := \bigl\{(y,s)\,:\,  y\in \Omega, \  \Psi^-(y) \le s \le \Psi^+(y)\bigr\}
\]
satisfies  $|A^\sim \Delta K_A| \le C\,\d^{\frac{\zeta\,\beta_{n,\tau}}{4} \e_{n-1}(\tau/2)}$.
The same argument also applies to $B^-$ so that, in all, we have 
\begin{equation}
\label{eq:new KA KB}
|A^\sim\Delta K_A|+ |B^\sim\Delta K_B| \leq 
C\,\d^{\frac{\zeta\,\beta_{n,\tau}}{4} \e_{n-1}(\tau/2)}.
\end{equation}

\subsubsection*{Step 6: Conclusion.}  

By \eqref{eq:new KA KB} and \eqref{eq:Asim difference},
we can apply Proposition \ref{prop:coS} to deduce that,
up to a translation, there exists a convex set $\K$ convex such that $A\cup B\subset \K$ and 
\begin{equation}
\label{eq:final bound}
|\K\setminus A|+|\K\setminus B|\leq C\,\d^{\frac{\zeta\,\beta_{n,\tau} }{8\,n^3}\e_{n-1}(\tau/2)},
\end{equation}
concluding the proof.

\subsubsection*{Step 7: An explicit bound for $\e_n(\tau)$ and $M_n(\tau)$.}
By \eqref{eq:final bound} and \eqref{eq:cond omega'} it follows that the 
recurrence for $\e_n(\tau)$ and  $M_n(\tau)$  is given, respectively, by
\[
\e_n(\tau) = \frac{\zeta\, \beta_{n,\tau}}{8\,n^2}\,\e_{n-1}(\tau/2),\qquad 
M_n(\tau) = \frac{4 }{\zeta}\,M_{n-1}(\tau/2).
\]
Recall that (see \eqref{eq:zeta} and Lemma \ref{lemma:concavity})
\[
\zeta=  \frac{\e_{n-1}(\tau)}{3}\eta,\qquad 
\beta_{n,\tau}= \frac{\tau}{16(n-1)|\log(\tau)|}.
\]

For $n=1$, Theorem \ref{thm:n1} implies that if $\delta < \tau/2$, then
\[
|\K\setminus A| + |\K \setminus B| \le 8\delta/\tau.
\]
In other  words, $\e_1(\tau) = 1$ and $M_1(\tau) = |\log (\tau/3)|$ are admissible choices.  

For $n \geq 2$ we have (see \eqref{eq:beta1} and \eqref{eq:eta})
\[
\eta=\frac{\bar\alpha}{n^2} = \frac{\beta_{n,\tau}}{2^4n^2} = \frac{\tau}{2^{7}(n-1)n^2 |\log \tau|},
\]
thus
\[
\zeta= 
\frac{\tau}{2^{7}\cdot 3(n-1)n^2\,|\log\tau|}\,\e_{n-1}(\tau),
\]
which gives
\[
\e_n(\tau) = \frac{\tau^2}{2^{14}\cdot 3(n-1)^2n^4 |\log \tau|^2}\,\e_{n-1}(\tau) \e_{n-1}(\tau/2) .
\]
In particular, we obtain and explicit lower bound for all $n\ge 2$ (which can be easily checked
to hold by induction):
\[
\e_{n}(\tau) \ge \frac{\tau^{3^n}}{2^{3^{n+1}} n^{3^n} |\log \tau|^{3^n}}.
\]
Concerning $M_n(\tau)$ we have
\[
M_n(\tau) = \frac{4 }{\zeta}\,M_{n-1}(\tau/2)=\frac{2^{9}\cdot 3(n-1)n^2\,|\log\tau|}{\tau\,\e_{n-1}(\tau)}\, M_{n-1}(\tau/2)
\]
from which we get
\[
M_{n}(\tau) \le \frac{2^{3^{n+2}} n^{3^n} |\log \tau|^{3^n}}{\tau^{3^n}}.
\]

\section{Proof of the technical results}
\label{sect:proofs}

As in the previous section, we use $C$ to denote a generic constant,
which may change from line to line, and that is bounded from above by $\tau^{-N_n}$
for some dimensional constant $N_n>1$.
Again, we will say that such a constant is universal.

\subsection{
Proof of Lemma \ref{lemma:BM n n-1}: an inductive proof of the Brunn-Minkowski inequality}
\label{sect:BM induction}

In this section we show how to prove the Brunn-Minkowski inequality \eqref{eq:BM} by induction on dimension.\footnote{The one-dimensional case is elementary,
and can be proved for instance as follows: given $A,B\subset \R$ compact, 
after translation we can assume that
$$
A\subset (-\infty,0],\qquad B \subset [0,+\infty),\qquad A\cap B=\{0\}.
$$
Then $A+B\supset A\cup B$, hence $|A+B| \geq |A\cup B|= |A|+|B|$, as desired.}
As a byproduct of our proof we obtain the 
bounds \eqref{eq:BM n n-1} and \eqref{induction:key}.

Given compact sets $A,B\subset\R^n$ and $S:=tA+(1-t)B$, we define the 
probability densities on the real line $\rho_A$, $\rho_B$, and $\rho_S$
as in \eqref{eq:rhoABS}, 
and we let $T:\R\to \R$ be the monotone rearrangement sending $\rho_A$ onto $\rho_B$.
Since monotone functions are differentiable almost everywhere, as a consequence of the Area Formula one has (see for instance \cite[Lemma 5.5.3]{AGS})
\begin{equation}
\label{eq:jacobian}
T'(s)=\frac{\rho_A(s)}{\rho_B(T(s))}\qquad \rho_A\text{-a.e.}
\end{equation}
Set $T_t(s):=ts+(1-t)T(s)$ 
and observe that $S(T_t(s))\supset tA(s)+(1-t)B(T(s))$, so by the Brunn-Minkowski inequality in $\R^{n-1}$ we get
\begin{equation}
\label{eq:BM Tt} 
\H^{n-1}\bigl(S(T_t(s))\bigr)^{1/(n-1)}\geq  t\H^{n-1}\bigl(A(s)\bigr)^{1/(n-1)}+(1-t)\H^{n-1}\bigl(B(T(s))\bigr)^{1/(n-1)}.
\end{equation}
We now write
\begin{align*}
|S|&=|S|\int_\R \rho_S(s)\,ds \geq |S|\int_\R\rho_S(T_t(s))\,T_t'(s)\,ds\\
&=\int_\R\H^{n-1}\bigl(S(T_t(s))\bigr)\,[t+(1-t)T'(s)]\,ds,
\end{align*}
(Here, when we applied the change of variable $s \mapsto T_t(s)$, 
we used the fact that since $T_t$ is increasing, its pointwise derivative
is bounded from above by its distributional derivative.)

Define
\begin{equation}
\label{eq:mu i}
\mu_1(s)=\ldots=\mu_{n-1}(s):=\frac{1-t}{t}\frac{|B|^{1/(n-1)}\rho_B(T(s))^{1/(n-1)}}{|A|^{1/(n-1)}\rho_A(s)^{1/(n-1)}},\qquad \mu_n(s):=\frac{1-t}{t}\frac{\rho_A(s)}{\rho_B(T(s))}
\end{equation}
Using \eqref{eq:BM Tt} and \eqref{eq:jacobian}, we obtain 
\begin{align*}
|S|&\geq \int_\R \Bigl(t\H^{n-1}\bigl(A(s)\bigr)^{1/(n-1)}+(1-t)\H^{n-1}\bigl(B(T(s))\bigr)^{1/(n-1)}\Bigr)^{n-1}\,[t+(1-t)T'(s)]\,ds\\
&=\int_\R \Bigl(t|A|^{1/(n-1)}\rho_A(s)^{1/(n-1)}+(1-t)|B|^{1/(n-1)}\rho_B(T(s))^{1/(n-1)}\Bigr)^{n-1}\,\biggl(t+(1-t)\frac{\rho_A(s)}{\rho_B(T(s))}\biggr)\,ds\\
&=|A|\int_\R t^n \prod_{i=1}^n (1+ \mu_i(s))\, 
%\biggl(1+\frac{(1-t)}{t}\frac{|B|^{1/(n-1)}\rho_B(T(s))^{1/(n-1)}}{|A|^{1/(n-1)}\rho_A(s)^{1/(n-1)}}\biggr)^{n-1}
%\,\biggl(1+\frac{(1-t)}{t}\frac{\rho_A(s)}{\rho_B(T(s))}\biggr)\,
\rho_A(s)\,ds.
\end{align*}
We now use the following inequality, see \cite[Equation (22)]{fmpBM} and \cite[Lemma 2.5]{fmpK}: there exists a dimensional constant $c(n)>0$ such that, for any choice of nonnegative numbers $\{\mu_i\}_{i=1,\ldots,n}$,
$$
\prod_{i=1}^n (1+\mu_i) 
\geq \biggl(1+\biggl(\prod_{i=1}^n \mu_i \biggr)^{1/n} \biggr)^n
+c(n) \frac{1}{\max_{i}\mu_i}\sum_{j=1}^n\biggl(\mu_j-\Bigl(\prod_{i=1}^n \mu_i \Bigr)^{1/n}\biggr)^2.
$$
Hence, we get
\begin{equation*}
%\biggl(1+\frac{1-t}{t}\frac{|B|^{1/(n-1)}\rho_B(T(s))^{1/(n-1)}}{|A|^{1/(n-1)}\rho_A(s)^{1/(n-1)}}\biggr)^{n-1}\,\biggl(1+\frac{1-t}{t}\frac{\rho_A(s)}{\rho_B(T(s))}\biggr)\\
\prod_{i=1}^n (1+\mu_i(s)) 
\geq \biggl(1+\frac{1-t}{t}\frac{|B|^{1/n}}{|A|^{1/n}} \biggr)^n
+c(n) \frac{1}{\max_{i}\mu_i(s)}\sum_{j=1}^n\biggl(\mu_j(s)-\frac{1-t}{t}\frac{|B|^{1/n}}{|A|^{1/n}} \biggr)^2,
\end{equation*}
which gives (recall that $\int \rho_A =1$)
\begin{align*}
|S|&\geq |A|\int_\R t^n\biggl(1+\frac{1-t}{t}\frac{|B|^{1/n}}{|A|^{1/n}} \biggr)^n\,\rho_A(s)\,ds\\
&\qquad +c(n)|A|t^n\int_\R \frac{1}{\max_{i}\mu_i(s)}\sum_{j=1}^n\biggl(\mu_j(s)-\frac{1-t}{t}\frac{|B|^{1/n}}{|A|^{1/n}} \biggr)^2\,\rho_A(s)ds\\
&\geq \Bigl(t|A|^{1/n}+(1-t)|B|^{1/n}\Bigr)^n,
\end{align*}
which proves the validity of Brunn-Minkowski in dimension $n$.
As a byproduct of this proof we will deduce \eqref{eq:BM n n-1} and \eqref{induction:key}.

Indeed \eqref{eq:BM n n-1} is immediate from
our proof.  Moreover, 
we have
\begin{equation}
\label{eq:stability T}
|S|-\Bigl(t|A|^{1/n}+(1-t)|B|^{1/n}\Bigr)^n\geq
c(n)|A|t^n\int_\R \frac{1}{\max_{i}\mu_i(s)}\sum_{j=1}^n\biggl(\mu_j(s)-\frac{1-t}{t}\frac{|B|^{1/n}}{|A|^{1/n}} \biggr)^2\rho_A(s)\,ds.
\end{equation}
% omitted as not needed anywhere I could find
%and for any set $I\subset \R$ it holds
%\begin{equation}
%\label{eq:BM n n-1 int}
%\begin{split}
%\int_{T_t(I)} \H^{n-1}\bigl(S(s)\bigr)\,ds&\geq 
%\int_I \H^{n-1}\bigl(S(T_t(s))\bigr)\,[t+(1-t)T'(s)]\,ds\\
%&\geq \Bigl(t|A|^{1/n}+(1-t)|B|^{1/n}\Bigr)^n \int_I\rho_A(s)\,ds. 
%\end{split}
%\end{equation}
With the further assumption \eqref{eq:measures}, \eqref{eq:stability T} gives
\begin{align*}
&\int_\R \frac{1}{\max_{i}\mu_i(s)}\biggl(\sum_{j=1}^n\Bigl|\frac{t}{1-t}\mu_j(s)-\frac{|B|^{1/n}}{|A|^{1/n}} \Bigr|\biggr)^2\rho_A(s)\,ds
\\
&\leq n\int_\R \frac{1}{\max_{i}\mu_i(s)}
\sum_{j=1}^n\biggl(\frac{t}{1-t}\mu_j(s)-\frac{|B|^{1/n}}{|A|^{1/n}} \biggr)^2\rho_A(s)\,ds\\
& \leq \frac{C(n)}{t^{n-2}(1-t)^{2}} \delta \leq  \frac{C(n)}{\tau^{n}} \delta ,
\end{align*}
which, combined with the Schwarz inequality, leads to
\begin{align*}
\int_\R \sum_{j=1}^n 
& \Bigl|\frac{t}{1-t}\mu_j(s)-\frac{|B|^{1/n}}{|A|^{1/n}} \Bigr|\,\rho_A(s)\,ds\\
&\leq  \frac{C(n)}{\tau^{n/2}} \delta^{1/2}\sqrt{\int_\R \max_{i}\mu_i(s)\,\rho_A(s)\,ds}\\
&\leq \frac{C(n)}{\tau^{n/2}} \delta^{1/2}\biggl(\sqrt{\int_\R \max_{i}\Bigl|\frac{t}{1-t}\mu_i(s) -\frac{|B|^{1/n}}{|A|^{1/n}} \Bigr| \,\rho_A(s)\,ds }+\sqrt{\frac{|B|^{1/n}}{|A|^{1/n}}}\biggr)\\
& \leq \frac{C(n)}{\tau^{n/2}} \delta^{1/2}\biggl(\sqrt{\int_\R \sum_{j=1}^n\Bigl|\frac{t}{1-t}\mu_j(s) -\frac{|B|^{1/n}}{|A|^{1/n}} \Bigr| \,\rho_A(s)\,ds }+2\biggr).
\end{align*}
Hence, provided $\delta/\tau^n$ is sufficiently small we get
$$
\int_\R \sum_{j=1}^n\biggl|\frac{t}{1-t}\mu_j(s)-\frac{|B|^{1/n}}{|A|^{1/n}} \biggr|\,ds
\leq \frac{C(n)}{\tau^{n/2}} \delta^{1/2}.
$$
Recalling the definition of $\mu_i$ (see \eqref{eq:mu i}) and using that 
$1-4\delta \leq |B|/|A| \leq 1+4\delta$, we deduce that
\eqref{induction:key} holds.

\subsection{Proof of Lemma \ref{lemma:concavity}}

We first remark that it suffices to prove the result in the case $\hat M=1$, since the general case
follows by applying the result to the function $f/\hat M$.
The proof of this result is rather involved and is divided into several steps.

\subsubsection*{Step a: Making $\psi$ uniformly concave at points that are well separated}
Let $\beta\in(0,1/3]$ to be fixed later, and define $\var :\Omega \to \R$ as 
\begin{equation}
\label{eq:var}
\var(y):=
\left\{
\begin{array}{ll}
\psi(y) +2 - 20\,(\sigma+\varsigma)^{\beta}|y|^2& y \in F,\\
0 &y \in \Omega\setminus F.
\end{array}
\right.
\end{equation}
Notice that, 
\[
|y_{12}'|^2 + |y_{12}''|^2 - |y_1|^2 - |y_2|^2 = -2t'(1-t') |y_1-y_2|^2 \le 
-\frac\tau{2} |y_1-y_2|^2.
\]
Because of this, \eqref{eq:abM} and \eqref{eq:ab}, we have
$0 \leq \var \leq 3$ and
$$
\var(y_1)+\var(y_2)\leq \var(y_{12}')+\var(y_{12}'')+\sigma
- 10\tau \,(\sigma+\varsigma)^{\beta}|y_1-y_2|^2 \qquad 
\forall\,y_1,y_2,y_{12}',y_{12}'' 
\in F,
$$
which implies in particular that
\begin{equation}
\label{eq:barvar}
\var(y_1)+\var(y_2) \leq \var(y_{12}')+\var(y_{12}'')
+\sigma \qquad \forall\,y_1,y_2,y_{12}',y_{12}'' \in F.
\end{equation}
Also, since $\beta \leq 1/3$,
\begin{equation}
\label{eq:barvar2}
\var(y_1)+\var(y_2) < \var(y_{12}')+\var(y_{12}'')
-\tau (\sigma+\varsigma)^{\beta}|y_1-y_2|^2 \qquad 
\forall\,y_1,y_2,y_{12}',y_{12}'' \in F,\,\,
|y_1-y_2| \geq \frac{(\sigma+\varsigma)^{\beta}}{\sqrt\tau},
\end{equation}
that is $\var$ is  uniformly concave on points of $F$ that are at least $(\sigma+\varsigma)^{\beta}/\sqrt\tau$-apart.

\subsubsection*{Step b: Constructing a concave function that should be close to $\var$}
Let us take $\gamma \in (0,\beta]$ to be fixed later,
and define
$$
\bar \var (y):=\min\{\var(y),h\},
$$
where $h \in [0,3]$ is given by 
\begin{equation}
\label{eq:def h}
h:=\inf\bigl\{t>0 : \H^{n-1}(\{\var >t\})
\leq (\sigma+\varsigma)^{\gamma}\bigr\}.
\end{equation}
Since $0\leq \var \leq 3$, we get
\begin{equation}
\label{eq:var bar}
\int_{\Omega} [\var(y) - \bar \var(y)]\,dy =\int_h^{3M} \H^{n-1}(\{\var>s\})\,ds \leq  3M (\sigma+\varsigma)^{\gamma}. 
\end{equation}
Notice that 
whenever $ \max\{\var(y_{12}'),\var(y_{12}'')\} \leq h$,
$\bar \var$ satisfies \eqref{eq:barvar} and \eqref{eq:barvar2}.

We define $\Phi:\Omega\to [0,h]$ to be the concave envelope of 
$\bar\var$, that is, the infimum
among all linear functions that are above $\bar\var$ in $\Omega$.
Our goal is to show that $\Phi$ is $L^1$-close to $\bar\var$ (and hence to $\var$).

\subsubsection*{Step c: The set $\{\Phi=\bar\var\}$ is 
$K(\sigma+\varsigma)^\beta$ 
dense in $\Omega \setminus \co(\{\bar\var > h-K(\sigma+\varsigma)^\beta\})$.}

Let $\beta \in (0,1/3]$ be as in Step b. We claim that there exists a universal constant $K>0$
such that the following holds, provided $\beta$ is sufficiently small (chosen
later depending on $\tau$ and dimension):  For any $y \in \Omega$,\\
-- either there is $x \in \{\Phi=\bar\var\}\cap \Omega$ with $|y-x|\leq K(\sigma+\varsigma)^\beta$;\\
-- or $y$ belongs to 
the convex hull of the set $\{\bar\var > h-K(\sigma+\varsigma)^\beta\}$.\\

To prove this, we define
$$
\Omega_\beta:=\bigl\{y \in \Omega:\dist(y,\partial \Omega) \geq (\sigma+\varsigma)^\beta\bigr\}.
$$
Of course, with a suitable value of $K$, it suffices to consider the case when $y\in \Omega_\beta$.   So, let us fix $y \in \Omega_\beta$.

Since $\Omega$ is a convex set comparable to a ball
of unit size (see \eqref{eq:Ks ball 2}) and $\Phi$ is 
a nonnegative concave function bounded by $3$ inside $\Omega$,
there exists a dimensional constant $C'$ such that, for every linear 
function $L\ge \Phi$ satisfying $L(y) = \Phi(y)$, we have
\begin{equation}
\label{eq:bound gradient f}
|\nabla L| \leq \frac{C'}{(\sigma+\varsigma)^\beta} .
\end{equation}
By \cite[Step 4-c]{fjA}, there are $m\leq n$ points $y_1, \ldots, y_{m} \in F$
such that $y \in S:=\co(\{y_1, \ldots, y_{m}\})$, and
all $y_j$'s are contact points:
\[
\Phi(y_j) =L(y_j)= \bar \var(y_j),  \qquad j = 1, \ldots, m.
\]
If the diameter of $S$ is less than $K(\sigma+\varsigma)^\beta$, then its
vertices are contact points
within $K(\sigma+\varsigma)^\beta$ of $y$ and we are done.

Hence, let us assume that the diameter
of $S$ is at least $K(\sigma+\varsigma)^\beta$.
We claim that 
\begin{equation}\label{eq:x_i}
\bar\var(y_i) > h-K\sigma^\b\qquad \forall\, i =1,\ldots,m.
\end{equation}
Observe that, if we can prove \eqref{eq:x_i}, then
$$
y\in S\subset 
\co(\{\bar\var > h-K(\sigma+\varsigma)^\beta\}),
$$
and we are done again.\\

It remains only to prove \eqref{eq:x_i}.   To begin the proof, given 
$i \in \{1,\ldots,m\}$, take
$j \in \{1,\ldots,m\}$
such that $|y_i-y_j| \ge K(\sigma+\varsigma)^\beta/2$ (such a $j$ always exists because of the assumption on the diameter of $S$).  We rename $i=1$ and
$j=2$.  

Fix $N \in \N$ to be chosen later.  For $x,y\in \Omega$, define
$$
H_N(x,y) :=\bigcap_{k=0}^N \bigcap_{j=0}^k \biggl(\frac{1}{(t')^j(t'')^{k-j}}F[y] - \Bigl(\frac{1}{(t')^j(t'')^{k-j}}-1\Bigr)x \biggr),
$$
where 
$$
F[y]:=F\cap \frac{F-t'y}{1-t'} \cap \frac{F-t''y}{1-t''}.
$$
Observe that, since $\Omega$ is convex and by \eqref{eq:coF},
$$
\H^{n-1}\bigl(\Omega \setminus F[y] \bigr) \leq C
\H^{n-1}(\Omega \setminus F)\leq C\,\varsigma,
$$
\begin{align*}
&\H^{n-1}\biggl(\Om \setminus \biggl(\frac{1}{(t')^j(t'')^{k-j}}F[y] - \Bigl(\frac{1}{(t')^j(t'')^{k-j}}-1\Bigr)x  \biggr)\biggr)\\
&= \frac{1}{(t')^{j(n-1)}(t'')^{(k-j)(n-1)}} \H^{n-1}\Bigl(\bigl((t')^j(t'')^{k-j}\Om + \bigl(1-(t')^j(t'')^{k-j}\bigr)x\bigr)\setminus F[y]\Bigr) \\
&\leq 
\frac{1}{(t')^{j(n-1)}(t'')^{(k-j)(n-1)}} \H^{n-1}(\Om\setminus F[y]) \leq \frac{C}{(t')^{j(n-1)}(t'')^{(k-j)(n-1)}}\,\varsigma,
\end{align*}
(and analogously for $t''$),
so
\begin{equation}
\label{eq:HN}
\begin{split}
\H^{n-1} \bigl(\Om \setminus H_N(x,y)\bigr) &\le C\sum_{k=0}^N\sum_{j=0}^k\frac{1}{(t')^{j(n-1)}(t'')^{(k-j)(n-1)}} \,\varsigma\\
&\leq C \biggl(\frac{1}{t''}\biggr)^{N(n-1)}\, \varsigma,
\end{split}
\end{equation}
where we used that $t'' \leq t'$.
Choose $N$ such that
\begin{equation}
\label{eq:N}
\biggl(\frac{\tau}{2}\biggr)^{N(n-1)}=  C\,\varsigma
\end{equation}
for some large dimensional constant $C$.  
In this way, from \eqref{eq:Ks ball 2} and \eqref{eq:HN} we get
\[
\H^{n-1}\bigl(H_N(y_2,y_1)\bigr) \ge c_n/2> 0, 
\]
and hence $H_N(y_2,y_1)$ is nonempty.

Now, choose $w_0 \in H_N(y_2,y_1)$,
and apply  \eqref{eq:barvar} iteratively in the following way:
if we set  $w_{1}' :=t' w_0+(1-t')y_2$, $w_1'' :=t'' w_0+(1-t'')y_2$, then the fact that $w_0 \in H_N(y_2,y_1)$
implies that $w_1',w_1'' \in F$ (and also that $t'y_1+(1-t')w_0, t''y_1+(1-t'')w_0\in F$). Hence we can apply \eqref{eq:barvar} to obtain
$$
\var(w_1')+ \var(w_1'')\geq  \var(y_2)+ \var(w_0)-\sigma.
$$
Then define $w_1$ to be equal either to $w_1'$ or to $w_1''$ so that 
$\var(w_1)=\max\{\var(w_1'),\var(w_1'')\}$. Then, it follows from the  equation above that
$$
\var(w_1) \geq \frac{ \var(y_2)+ \var(w_0)}{2} - \frac{\sigma}2.
$$
We now set $w_{2}' :=t' w_1+(1-t')y_2,w_2'' :=t'' w_1+(1-t'')y_2 \in F$,
and apply \eqref{eq:barvar} again to get
$$
 \var(w_2')+ \var(w_2'')\geq  \var(y_2)+ \var(w_1)-\sigma\geq (1-1/4)\var(y_2)
 +\var(w_0)/4-\bigl(1+1/2\bigr)\sigma.
$$
Again we choose $w_2 \in \{w_2',w_2''\}$ so that $\var(w_2)=\max\{\var(w_2'),\var(w_2'')\}$
and we keep iterating this construction, so that in $N$ steps we get
(recall that $0\leq  \var\leq 3$)
\begin{equation*}
\begin{split}
\var(w_N)&\geq (1-2^{-N})\var(y_2)+2^{-N}\var(w_0)- 2\sigma\\
&\geq (1-2^{-N})\var(y_2) -2\sigma\\
&\geq \var(y_2)-3 \cdot 2^{-N} - 2\sigma
\end{split}
\end{equation*}
Hence,
\begin{equation}\label{eq:varz1}
\bar\var(w_N) \geq \bar \var(y_2)-3 \cdot 2^{-N} - 2\sigma.
\end{equation}
In addition, since $w_0 \in H_N(y_1,y_2)$, 
\[
y' := t'y_1+ t'' w_N \in F, \qquad y'' := t''y_1+ t'w_N\in F.
\]
Since  the diameter of $F$ is bounded (see \eqref{eq:coF} and
\eqref{eq:Ks ball 2}) it is easy to check that
$$
|w_N- y_2|\leq C\, (t')^{N} \le C(1-\tau/2)^N.
$$
Therefore, by \eqref{eq:bound gradient f} we have
\[
|L(y'+y'') - L(y_1+y_2)| = |L(w_N-y_2)| \le C\,(\sigma+\varsigma)^{-\beta}(1-\tau/2)^{N}.
\]
Hence, since $y_1$ and $y_2$ are contact points and $L \geq \bar\var$,
using \eqref{eq:varz1} and \eqref{eq:N} we get
\begin{equation}
\label{eq:mid}
\begin{split}
\bar\var(y_1) + \bar\var(w_N) 
& \ge \bar\var(y_1) + \bar\var(y_2) - 3 \cdot 2^{-N}- 2\sigma \\
&= L(y_1+y_2)- 3 \cdot 2^{-N}- 2\sigma \\
&\geq L(y'+y'') -C\Bigl(2^{-N}+\sigma+(\sigma+\varsigma)^{-\beta}(1-\tau/2)^{N}\Bigr)\\
&\geq \bar\var(y')+\bar\var(y'') -C\,\bigl(\sigma+\varsigma\bigr)^{\min \{\theta,\kappa-\beta\}},
\end{split}
\end{equation}
where (recall \eqref{eq:tau12})
\begin{equation}
 \label{eq:theta kappa}
 \theta:=\frac{1}{n-1} \frac{\log(2)}{|\log(\tau/2)|},\qquad \kappa:=\frac{1}{n-1} \frac{|\log(1-\tau/2)|}{|\log(\tau/2)|}.
\end{equation}
Now assume by way of contradiction that 
\[
\bar\var(y_1) \le h - K (\sigma + \varsigma)^\beta.
\]
We also have (recalling that $t'' = 1-t'$) 
\begin{align*}
L(w_N) & \le L(w_N-y_2) + L(y_2) = L(w_N-y_2) + \bar \var(y_2)  \\
& \le C(\sigma + \varsigma)^{-\beta}(1-\tau/2)^N + h 
\le h  + C(\sigma +\varsigma)^{\kappa - \beta}
\end{align*}
Hence, since $L(y_1) = \bar\var(y_1)$, 
\begin{align*}
L(y'') & = t''\bar\var(y_1) + t' L(w_N) \\
& \le t''h -  \frac{K}2 (\sigma + \varsigma)^\beta
+ t' h + C(\sigma +\varsigma)^{\kappa - \beta}  < h, 
\end{align*}
provided $K > 2C$ and $ \beta \le \kappa/2$.   Similarly
(and more easily since $t'' \le t'$), we have
$L(y')< h$.  Since $L \ge \bar\var$, we have 
$\max\{\bar\var(y'),\bar\var(y'')\}<h$.  Applying
\eqref{eq:barvar2} with $y_2$ replaced by $w_N$ we get
$$
\bar\var(y')+\bar\var(y'')  = 
\var(y')+\var(y'')  
\ge \var(y_1) + \var(w_N) + \tau(\sigma+\varsigma)^\beta|y_1-w_N|^2,
$$
and since $|y_1-w_N| \geq |y_1-y_2|/2 \geq K (\sigma+\varsigma)^\beta/4$ this implies
$$
\bar\var(y')+\bar\var(y'') 
\ge \bar\var(y_1) + \bar\var(w_N) + \frac{\tau K^2}{16}(\sigma+\varsigma)^{3\beta},
$$
which contradicts \eqref{eq:mid} provided we choose
$\beta:=\min\left\{\frac{\theta}{3},\frac{\kappa}{4}\right\}$ and $K$ sufficiently large.

Recalling the definition of $\theta$ and $\kappa$ (see \eqref{eq:theta kappa}), this concludes the proof with the choice
\begin{equation}
\label{eq:beta}
\beta:=\frac{1}{(n-1)|\log(\tau/2)|}\min\left\{\frac{\log(2)}{3},\frac{|\log(1- \tau/2)|}{4}\right\} \ge \frac{\tau}{8(n-1)|\log \tau|}.
\end{equation}

\subsubsection*{Step d: Most of the level sets of $\bar\var+20(\sigma+\varsigma)^\beta|y|^2$ are close to their convex hull}

This will follow from the fact that it is true for $\psi$.  Indeed,
define
\[
\bar\psi(y):=\min\left\{\bar\var(y)+20(\sigma+\varsigma)^\beta|y|^2,h\right\}
\]
Then
\[
\{y\in F: \bar\psi(y) > s\} = 
\begin{cases}
\{y\in F: \psi(y) > s-2\} & \quad \text{if } s< h, \\
\qquad \emptyset  & \quad \text{if } s\ge h.
\end{cases}
\]
Define
\[
H_1:=\{s \in \R\,:\,s-2 \in H\} \cup [h,\infty),
\qquad H_2:=\R\setminus H_1.
\]
Then it follows from \eqref{eq:level sets co} and \eqref{eq:coF} that
\begin{equation}
\label{eq:co psi}
\int_{H_1} \H^{n-1}\bigl(\co(\{\bar \psi>s\})\setminus \{\bar \psi>s\}\bigr)\,ds
+\int_{H_2} \H^{n-1}\bigl(\{\bar \psi>s\}\bigr)\,ds 
\le 3\, \H^{n-1}(\Omega\setminus F) + \varsigma \leq C\,\varsigma.
\end{equation}
%\begin{multline*}
%\int_{H_1} \H^{n-1}\bigl(\co(\{\var+20(\sigma+\varsigma)^\beta|y|^2>s\})\setminus \{\var+20(\sigma+\varsigma)^\beta|y|^2>s\}\bigr)\,ds\\
%+ \int_{H_2} \H^{n-1}\bigl(\{\var+20(\sigma+\varsigma)^\beta|y|^2>s\}\bigr)\,ds \leq \varsigma.
%\end{multline*}
Notice that, since $\{\bar\psi>s\}\supset \{\bar\var>s\}$, by \eqref{eq:def h} we 
have $\H^{n-1}\bigl(\{\bar\psi>s\}\bigr)
\geq (\sigma+\varsigma)^\gamma$ for all $s<h$.
So by \eqref{eq:co psi}
\begin{equation}
\label{eq:co psi2}
\H^1(H_2) \leq C\, (\sigma+\varsigma)^{1-\gamma}.
\end{equation}

\subsubsection*{Step e: $\psi$ is $L^1$-close to a concave function}
Since the sets $\{\bar\psi>s\}$ are decreasing in $s$, so are their convex hulls
$\co(\{\bar\psi >s\})$. Hence, we can
define a new function $\xi:\Omega \to \R$ with convex level sets given by 
\[
\{\xi>s\}:=\co(\{\bar\psi >s\})\quad \text{if $s \in H_1$},
\qquad \{\xi>s\}:=\bigcap_{s' \in H_1,\,s'<s}\co(\{\bar\psi>s' \}) \quad \text{if $s \in H_2$},
\]
Recall that $\Phi$ denotes the concave envelope of $\bar\phi$, and 
in particular, $\bar\psi \le \Phi + C(\sigma + \varsigma)^\beta$.  It follows 
from the definition of $\xi$ that
\begin{equation}
\label{eq:xi extra}
0 \le \bar\psi \le \xi; \quad \xi \le \Phi
+C(\sigma+\varsigma)^\beta.
\end{equation}
By \eqref{eq:co psi} and \eqref{eq:co psi2}, we see that
$\xi$ satisfies
\begin{equation}
\label{eq:xi}
\int_{\Omega} |\xi-\bar\psi| \leq \varsigma+C(\sigma+\varsigma)^{1-\gamma}.
\end{equation}
Also, because of \eqref{eq:def h}, we see that
\begin{equation}
\label{eq:level xi}
\H^{n-1}(\{\xi>s\}) \geq (\sigma+\varsigma)^\gamma \qquad \forall\,0\leq s<h.
\end{equation}
Recall from Step c that the contact set 
$\{\Phi = \bar\var\}$ is $\e_1$-dense
in 
\[
\Omega \setminus \co(\{\bar\var > h-\e_1 \})
\]
with $\e_1 := K(\sigma+\varsigma)^\beta$.  \\

Let 
\begin{equation} \label{eq:epsilon}
\e := \hat C\frac{\e_1}{(\sigma+ \varsigma)^{ \gamma}  }
=  \hat C (\sigma+ \varsigma)^{\beta - \gamma},
\end{equation}
where $\hat C$ is a large universal constant (to be chosen).

We claim that, if $s < h-\e_1$, 
\begin{equation}\label{eq:convex cover}
\{y\in \Omega: \Phi(y)>s\} \subset \mbox{$\e$-neighborhood of} \ 
\{y\in \Omega: \xi(y)>s\}.
\end{equation}
To prove \eqref{eq:convex cover}, assume by contradiction that 
there exists $y\in \{\Phi > s\}$ such that 
$B_\e(y) \cap \{\xi >s\}= \emptyset$.  Since, $s + \e_1 < h$,
\eqref{eq:level xi} implies
\[
\H^{n-1}(\{\xi > s + \e_1\}) \ge (\sigma + \varsigma)^\gamma.
\]
In addition, $\{\xi > s + \e_1\}$ is a universally bounded convex set, 
so there is $y' \in \Omega$ and $\rho = c (\sigma + \varsigma)^\gamma$,
with $c>0$ a dimensional constant, such that 
\[
B_\rho(y') \subset \{ \xi > s + \e_1\} \subset \{\Phi > s\}.
\]
Since $\Omega$ is universally bounded, there exists $y''\in \Omega$
and $r = c\rho \e$, with $c>0$ a dimensional constant, such that
\[
B_r(y'') \subset \co(B_{\rho}(y') \cup \{ y\}) \cap B_{\e}(y) 
\subset \{ \Phi > s\} \cap \{\xi \le s\}.
\]
Thus for any $z\in B_r(y'')$, 
\[
\bar\var(z) \le \xi(z)  \le s < \Phi(z),
\]
and there are no contact points of $\{ \Phi = \bar\var\}$ in $B_r(y'')$. 
But note that for our choice of $\e$, $r= c\rho \e > \e_1$ provided $\hat C$ is sufficiently large.
This contradicts the $\e_1$-density property,
proving \eqref{eq:convex cover}.\\

Since all level sets of $\xi$ are (universally) bounded convex sets, 
as a consequence of \eqref{eq:convex cover} we deduce that 
\[
\H^{n-1}(\{\Phi>s\}) \leq \H^{n-1}(\{\xi>s\})+C \e \qquad \forall\, s < h-\e_1.
\]
Furthermore, since $\xi \leq \Phi + \e_1$, we obviously have
that $|\Phi-\xi|\leq 2\e_1$ on the set $\{\xi>h-\e_1\}$.
Hence, by Fubini's Theorem (and $\e > \e_1$),
$$
\int_{\Omega} |\Phi-\xi| \leq C \,\e.
$$
Combining this estimate with \eqref{eq:xi} and the fact that
$|\bar\psi - \bar\var| \le C(\sigma + \varsigma)^\b  < \e$, we get
\[
\int_{\Omega} |\Phi-\bar\var| \leq C \, \e 
\]
By construction
$|\var(y)-2 - \psi (y)| \leq C\,(\sigma+\varsigma)^\beta$ on $F$ (see \eqref{eq:var}).
Hence, by \eqref{eq:var bar} we have
\begin{align*}
\int_{\Omega}|\bar\var(y)-2 - \psi (y)|\,dy&
\leq 3\,|\Omega\setminus F| + 
\int_{F}\bigl[ |\bar\var(y)-\var(y)| + |\var(y)-2M-f(y)|\bigr]\,dy 
\\
&\leq 3\, \bigl((\sigma+\varsigma)^\gamma+\varsigma\bigr) +  \e|F| \leq C\bigl( (\sigma+\varsigma)^\gamma+\e\bigr).
\end{align*}
Taking $\gamma = \beta/2$ and recalling \eqref{eq:epsilon} and \eqref{eq:beta}, 
this proves \eqref{eq:almost concave} with $\Psi:=\Phi-2$.

\subsection{A linearity result}
The aim of this section it to show that, if a one dimensional function satisfies on a large set the concavity-type estimate in Remark \ref{rmk:4pts} (see Section \ref{sect:natural})
both from above and from below, then it is universally close to a linear function.

\begin{lemma}
\label{lemma:1d}
Let $0 < \tau \le1/2$ and fix $t'$ such that 
$1/2 \le t' \le 1-\tau/2$.  Let $t'' = 1-t'$, and for all 
$m_1,m_2\in \R$ define
\[
m_{12}' := t'm_1 + t'' m_2; \quad m_{12}'' := t'' m_1 + t' m_2.
\]
Let $E\subset \R$, and let $f:E\to \R$ be a bounded measurable function such that 
\begin{equation}
\label{eq:f}
\left|f(m_1)+f(m_2) -f(m_{12}')-f(m_{12}'') \right| \leq 1 \qquad \forall\,m_1,m_2,m_{12}',m_{12}'' \in E.
\end{equation}
Assume that there exist points $\bar m_1,\bar m_2 \in \R$ such that $\bar m_1,\bar m_2,\bar m_{12}',\bar m_{12}'' \in E$
and  $|E\cap [\bar m_1,\bar m_2]| \geq (1-\e)|\bar m_2-\bar m_1|$.
Then the following hold provided $\e$ is sufficiently small (the smallness being universal):
\begin{enumerate}
\item[(i)] There 
exist a linear function $\ell:[\bar m_1,\bar m_2] \to \R$ and 
a universal constant $\bar M$,
such that 
$$
|f-\ell |\leq \bar M \qquad \text{in $E \cap [\bar m_1,\bar m_2]$}.
$$
\item[(ii)] If in addition $|f(\bar m_1)|+|f(\bar m_2)| \leq K$ for some constant $K$, then $|f|\leq K+\bar M$
inside $E$.
\end{enumerate}
\end{lemma}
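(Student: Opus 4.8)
The plan is to read hypothesis \eqref{eq:f} as a Hyers--Ulam-type stability statement for the (weighted, two-point) Jensen equation, posed on the partial domain $E$, and to prove boundedness of the resulting defect by an iteration built on the density of $E$ inside $[\bar m_1,\bar m_2]$. First I would normalize: both \eqref{eq:f} and the density condition $|E\cap[\bar m_1,\bar m_2]|\ge(1-\e)|\bar m_2-\bar m_1|$ are invariant under affine changes of variable on the line, so we may assume $\bar m_1=0$, $\bar m_2=1$. Since $0,1\in E$, the values $f(0),f(1)$ are defined; let $\ell$ be the affine function with $\ell(0)=f(0)$, $\ell(1)=f(1)$, and set $g:=f-\ell$ on $E$. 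Affine functions satisfy \eqref{eq:f} with equality (because $m_{12}'+m_{12}''=m_1+m_2$), so $g$ still satisfies \eqref{eq:f} with the same constant, and now $g(0)=g(1)=0$. Thus (i) is reduced to showing $|g|\le\bar M$ on $E\cap[0,1]$ for a universal $\bar M$; part (ii) then follows at once, since $\sup_{[0,1]}|\ell|\le\max\{|f(0)|,|f(1)|\}\le K$ gives $|f|\le|g|+|\ell|\le\bar M+K$.

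For the core estimate I would follow the scheme of \cite[Step 3]{fjA}. The key device is that anchoring one of the four arguments of \eqref{eq:f} at an endpoint turns it into a \emph{three-term} relation: with $m_1=0$ one gets $|g(t'w)+g(t''w)-g(w)|\le 1$ whenever $w,t'w,t''w\in E$, and with $m_2=1$ one gets $|g(t'u+t'')+g(t''u+t')-g(u)|\le1$ whenever $u,t'u+t'',t''u+t'\in E$. Each such relation pins down one value of $g$ from the other two, at the cost of an additive constant only. Starting from $g(0)=g(1)=0$ and iterating these relations, I would propagate a uniform bound onto larger and larger subsets of $E\cap[0,1]$, at every stage selecting the auxiliary arguments by the density hypothesis: the set of inadmissible choices at a given scale has measure $O(\e/\tau)$, hence is negligible once $\e$ lies below a universal threshold (a Fubini/Chebyshev selection of the kind used repeatedly in the paper). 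Because $t'$ and $t''=1-t'$ both lie in $[\tau/2,1-\tau/2]$, each of the two pairs of contractions $w\mapsto t'w,\ w\mapsto t''w$ (with fixed point $0$) and $w\mapsto t'w+t'',\ w\mapsto t''w+t'$ (with fixed point $1$) makes definite progress, so $[0,1]$ is exhausted in a number of stages quantified by $\tau$; keeping track of the constants along the iteration produces the universal $\bar M$.

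I expect the main obstacle to be the error bookkeeping caused by $t'\neq\tfrac12$. In the classical midpoint situation the two output points of the Jensen relation collapse to one, and Hyers' iteration contracts the error geometrically; here each application of \eqref{eq:f} creates \emph{two} new points, so a naive recursion branches and the accumulated loss could a priori grow exponentially in the number of steps. The delicate point --- whose analogue is handled in \cite{fjA} --- is to choose the order in which points are visited, and which value is solved for at each step, so that the errors add only \emph{linearly} in the (controlled) number of stages. A more routine, but still genuine, difficulty is getting the iteration started and handling the regions near the fixed points $0$ and $1$: there one has only the two endpoint values to work with, so one must first use \eqref{eq:f} to spread control from $\{0,1\}$ onto a set of positive measure before the density argument can be brought to bear; all of this is fine for $\e$ universally small.
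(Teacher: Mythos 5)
Your setup matches the paper's: normalize the segment, subtract the affine interpolant so that $g:=f-\ell$ vanishes at the endpoints (affine functions are exact solutions of the four-point relation since $m_{12}'+m_{12}''=m_1+m_2$), and observe that (ii) follows immediately from (i). But from that point on your proposal diverges from the paper's argument, and it has a genuine gap that you yourself flag but do not resolve.

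The gap is the branching you mention. Anchoring \eqref{eq:f} at an endpoint gives a \emph{single} scalar relation, e.g.\ $|g(t'w)+g(t''w)-g(w)|\le 1$, with \emph{two} new unknowns $g(t'w)$ and $g(t''w)$. It does not ``pin down one value from the other two'' unless you already control two of the three, and in a forward iteration starting from $g(0)=g(1)=0$ you never do: at each step only the sum of the two children is controlled. No choice of visitation order repairs this, because the underdetermination is structural, not combinatorial. Your remark that the analogue is ``handled in \cite{fjA}'' does not help: that situation ($A=B$, $t=1/2$) is precisely the midpoint case where the two output points coincide and the branching disappears, so the mechanism there does not transfer.

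What the paper actually does is quite different, and both of its ingredients are absent from your proposal. First, it averages \eqref{eq:f} in $m_2$ over $E$ (using the density of $E$ in $[-1,1]$) to express $F(m_1)$ as a combination of integrals of $F$ over intervals that shift continuously with $m_1$; this yields the ``almost-Lipschitz'' bound $|F(m')-F(m'')|\le C(\bar M|m'-m''|+1+\e\bar M)$ with $\bar M:=\sup|F|$. Second, rather than propagating from the endpoints, it \emph{pivots at a near-maximizer} $\tilde m$ of $|F|$: one writes $\tilde m$ as a fixed point of a contraction towards one endpoint, and at each iteration step one of the two children lands close to $\tilde m$, hence (by the almost-Lipschitz bound) has value close to $\bar M$; this \emph{forces} the value at the other child, breaking exactly the branching problem. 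After a universal number $N\sim\log(1/\tau)/\tau$ of steps the points converge geometrically to $\tilde m$, and one lands on a self-improving inequality of the form $\bar M\le \bar M/2 + C(1+N+\e N\bar M)$, which bounds $\bar M$ once $\e$ is small. Without the averaging estimate there is no way to pass the bound obtained along the iterated sequence back to the point $\tilde m$ itself, and without the pivot at the sup there is no way to resolve the two-unknowns-one-equation problem; your outline contains neither.
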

\begin{proof} Without loss of generality, we can assume that $[\bar m_1,\bar m_2]=[-1,1]$
and $E \subset [-1,1]$.
Given numbers $a\in\R$ and $b>0$, we write 
$a=O(b)$ if $|a|\leq Cb$ for some universal
constant $C$.\\

To prove (i), let us define
$$
\ell(m):=\frac{f(1)-f(-1)}{2} m + \frac{f(1)+f(-1)}{2},
$$
and set $F:=f-\ell$. 
Observe that $F(-1)=F(1)=0$, and $F$ still satisfies \eqref{eq:f}.
Hence, 
since by assumption $-1,1,1-2t',1-2t'' \in E$, by \eqref{eq:f} we get $|F(1-2t')+F(1-2t'')| \leq 1$.
Let us extend $F$ to the whole interval $[-1,1]$ as $F(m)=0$ if $m \not\in E$, and set
$$
\bar M:=\sup_{m \in [-1,1]}|F(m)|.
$$
We want to show that $\bar M$ is universally bounded.

Averaging \eqref{eq:f} (applied to $F$ in place of $f$) with respect to $m_2 \in E$
and using that $|E\cap [-1,1]| \geq 2(1-\e)$, we easily obtain the following bound:
\begin{align*}
F(m_1)&=\frac{1}{2-2t'}\int_{t'm_1-(1-t')}^{t'm_1+(1-t')}F(m)\,dm
+\frac{1}{2-2t''}\int_{t''m_1-(1-t'')}^{t''m_1+(1-t'')}F(m)\,dm\\
&\qquad - \frac12\int_{-1}^{1}F(m)\,dm +O(1)+O(\e \bar M),
\end{align*}
from which it follows that
\begin{equation}
\label{eq:c almost Lip}
|F(m')-F(m'')|\leq  C\Bigl( \bar M|m'-m''|+1+ \e \bar M\Bigr)\qquad \forall\,m',m'' \in E.
\end{equation}

Now, pick a point $\tilde m\in E$ such that
\begin{equation}
\label{eq:F large}
|F(\tilde m)|\geq \bar M-1.
\end{equation}
With no loss of generality we assume that $F(\tilde m) \geq \bar M-1$.
Since
$$
\bigcup_{\bar m_0\in [-1,1]} \{-t''+(1-t'')\bar m_0\} =[-1,1-2t''],\qquad \bigcup_{\bar m_0\in [-1,1]} \{t'\bar m_0+(1-t')\} =[1-2t'',1],
$$
we can find a point $\bar m_0 \in [-1,1]$ such that
$$
\text{either}\quad \tilde m=-t''+(1-t'')\bar m_0\quad \text{or}\quad \tilde m=t'\bar m_0+(1-t').
$$
Without loss of generality we can assume that we are in the fist case.

Set $m_0:=-1$. We want to find a point $\hat m_0 \in E$ close to $\bar m_0$ such that 
\begin{equation}
\label{eq:close bar m0}
t'm_0+(1-t')\hat m_0,\,t''m_0+(1-t'')\hat m_0 \in E.
\end{equation}
Define $C_t:=2\left(\frac{1}{1-t'}+\frac{1}{1-t''}\right)$. Then the above inclusions mean
$$
\hat m_0 \in \frac{E-t'm_0}{1-t'}\cap \frac{E-t''m_0}{1-t''},
$$
and since the latter set contains $[-1,1]$ up to a set of measure $C_t\e$, we can find such a point at a distance at most $C_t\e$ from $\bar m_0$.
Notice that in this way we also get 
$|t''m_0+(1-t'')\hat m_0 - \tilde m| \leq C_t\e$, so, by \eqref{eq:c almost Lip} and \eqref{eq:F large},
$$
F(t''m_0+(1-t'')\hat m_0) \geq \bar M-1 -  C\bigl(1+(1+C_t)\e \bar M \bigr).
$$
Then, thanks to \eqref{eq:close bar m0}, we can apply \eqref{eq:f} with $F$ in place of $f$, $m_1=m_0$, and $m_2=\bar m_0$, to deduce that (recall that $F(m_0)=F(-1)=0$ and that $|F|\leq \bar M$)
\begin{align*}
F(t'm_0+(1-t')\hat m_0) & \leq 1+F(m_0)-F(t''m_0+(1-t'')\hat m_0)+ F(\bar m_0)\\
&\leq 1+(1-\bar M)+ C\bigl(1+(1+C_t)\e \bar M \bigr) +\bar M\\
&=2+  C\bigl(1+(1+C_t)\e \bar M\bigr).
\end{align*}
We now define $m_1:=t'm_0+(1-t')\hat m_0$ and we choose $\bar m_1\in [-1,1]$ such that 
$$
\tilde m=t''m_1+(1-t'')\bar m_1.
$$
Again we pick a point $\hat m_1 \in [\bar m_1-C_t\e,\bar m_1+C_t\e]\cap E$ such that
$$
tm_1'+(1-t')\hat m_1,\,t''m_1+(1-t'')\hat m_1 \in E,
$$
and applying again \eqref{eq:c almost Lip} and \eqref{eq:f} we get
$$
F(t''m_1+(1-t'')\hat m_1) \geq \bar M-1 - \bar C\bigl(1+(1+C_t)\e \bar M \bigr),
$$
hence
\begin{align*}
F(t'm_1+(1-t')\hat m_1)&
\leq 1+F(m_1)-F(t''m_1+(1-t'')\hat m_1)+ F(\bar m_1)\\
& \leq 1+2+ \bar C\bigl(1+(1+C_t)\e \bar M\bigr)+(1-\bar M)+ \bar C\bigl(1+(1+C_t)\e \bar M\bigr)+\bar M\\
&\leq 4+2 C\bigl(1+(1+C_t)\e \bar M\bigr).
\end{align*}
Iterating this procedure, after $k$ steps we get 
$$
F(t'm_k+(1-t')\hat m_k) \leq 2(k+1)+(k+1)\bar C\bigl(1+(1+C_t)\e \bar M\bigr),
$$
and it is easy to check that the points $m_k$ and $\bar m_k$ converge geometrically to $\tilde m$ up to an additive error $C_t\e$ at every step, that is
$$
|m_k -\tilde m|+|\bar m_k - \tilde m| \leq \bar C \Bigl( 2^{-k}+kC_t\e\Bigr).
$$
Hence, thanks to \eqref{eq:c almost Lip} applied with $m'=t'm_k+(1-t')\hat m_k$ and $m''=\tilde m$ we get
\begin{align*}
\bar M-1-2(k+1)\Bigl(2+ \bar C\bigl(1+(1+C_t)\e \bar M\bigr)\Bigr) &\leq F(\tilde m)-F(t'm_k+(1-t')\bar m_k)\\
&\leq  \bar C\Bigl( \bar M \bigl(2^{-k}+kC_t\e\bigr)+ \e \bar M\Bigr),
\end{align*}
for some universal constant $\bar C$.
Hence, by choosing $k=N$  a large universal constant so that $\bar C\,2^{-N}\leq 1/2$, we obtain
$$
\bar M \leq \frac{\bar M}{2} +C\biggl(1+N+\e N \bar M \biggr),
$$
which proves that $\bar M$ is universally bounded
provided $\e$ is sufficiently small (the smallness being universal).
This proves (i).\\

To prove (ii), it suffices to observe that
if $|f(-1)|+|f(1)| \leq K$ then $|\ell| \leq K$, so (i) gives
$$
|f| \leq |\ell|+|F| \leq K+\bar M.
$$
\end{proof}

\subsection{Proof of Proposition \ref{prop:coS}}
%{\color{green}
After  translating and replacing $R$ by $2R$, we can assume that
the barycenter of both $K_A$ and $K_B$ coincide with the origin.  Then
observe that 
\begin{align*}
\|\chi_{tA}\ast \chi_{(1-t)B} - \chi_{tK_A}\ast \chi_{(1-t)K_B}\|_{L^\infty(\R^n)}&\leq 
\|\chi_{tA}- \chi_{tK_A}\|_{L^1(\R^n)}+ \|\chi_{(1-t)B} - \chi_{(1-t)K_B}\|_{L^1(\R^n)}\\
& \leq |A\Delta K_A|+|B\Delta K_B|\leq \zeta.
\end{align*}\\

We claim that 
\begin{equation}
\label{eq:chi KAKB}
\chi_{tK_A}\ast \chi_{(1-t)K_B}(x) 
> \zeta \qquad \forall\,x \in \bigl(1-C\zeta^{1/n}\bigr) [tK_A+(1-t)K_B],
\end{equation}
for $C$ a universal constant.  Indeed, by John's lemma, since 
$K_A$ and $K_B$ are convex sets in $B_R$ of volume comparable 
to $1$ with barycenter $0$, there is a ball $B_c$ centered at $0$ 
such that 
\[
B_c \subset K_A\cap K_B, \qquad c \ge c_n R^{1-n}.
\]
Since we are assuming $R\le \tau^{-N_n}$, $c$ is a universal positive constant.  
Let $x= tx_1 + (1-t)x_2$ for some $x_1\in (1-\d_1)K_A$ and $x_2 \in (1-\d_1)K_B$,
then 
\[
\d_1 B_c + x_1 \subset K_A, \quad \d_1B_c + x_2 \subset K_B.
\]
Hence $\d_1B_c \subset (x_1-K_A)$ and $\d_1B_c \subset (K_B - x_2)$, and
consequently
\[
\tau \d_1 B_c \subset [t(x_1-K_A)] \cap [(1-t)(K_B - x_2)].
\]
Thus
\[
\chi_{tK_A}\ast \chi_{(1-t)K_B}(x) = |(x-tK_A)\cap (1-t)K_B|
= |t(x_1-K_A) \cap (1-t)(K_B-x_2)| \ge |\tau \d_1 B_c| > \zeta
\]
provided $\d_1 = C\zeta^{1/n}$ for some universal constant $C$, proving the claim.\\

It follows from \eqref{eq:chi KAKB} that $\chi_{tA}\ast \chi_{(1-t)B}(x)>0$,
which implies $x \in S$.  In all, we have 
\begin{equation}
\label{eq:KABS}
\bigl(1-C\zeta^{1/n}\bigr) [tK_A+(1-t)K_B]\subset S.
\end{equation}
Therefore by \eqref{eq:measures} (since, by assumption, $\delta \leq \zeta$)
$$
|tK_A+(1-t)K_B| \leq (1+C\zeta^{1/n})|S|\leq 1+C\,\zeta^{1/n}.
$$
Since
\begin{equation}
\label{eq:measKAB}
\bigl| |K_A|-1\bigr| +\bigl| |K_B|-1\bigr|  \leq C\,\zeta
\end{equation}
(by \eqref{eq:measures} and \eqref{eq:KA KB zeta}),
it follows from Theorem \ref{thm:BMconvex} that
\begin{equation}
\label{eq:coAB}
|K_A\Delta K_B|\leq C\,
\zeta^{1/2n}.
\end{equation}
(Notice that since $K_A$ and $K_B$ have the same barycenter, there is
not need to translate them.)  In particular this immediately implies
that
$$
|A\Delta B|\leq C\,
\zeta^{1/2n}.
$$

Now observe that, by \eqref{eq:BM} and \eqref{eq:measKAB} we get
$$
\bigl|\bigl(1-C\zeta^{1/n}\bigr) [tK_A+(1-t)K_B]\bigr| \geq 1-C\,\zeta^{1/n},
$$
and hence it follows from \eqref{eq:KABS} and \eqref{eq:measures} that
\begin{equation}
\label{eq:coABS}
\bigl|\bigl(tK_A+(1-t)K_B\bigr)\Delta S\bigr| \leq C\,\zeta^{1/n}.
\end{equation}
Consider  the convex set $\K_0:=\co(K_A\cup K_B)\supset tK_A+(1-t)K_B$.
By a simple geometric argument using \eqref{eq:coAB}, 
we easily deduce that
$$
\K_0\subset \bigl(1+C\,\zeta^{1/2n^2}\bigr)K_A,
\quad \K_0\subset \bigl(1+C\,\zeta^{1/2n^2}\bigr)K_B,
\quad \K_0\subset \bigl(1+C\,\zeta^{1/2n^2}\bigr)[tK_A+(1-t)K_B],
$$
so, by \eqref{eq:KA KB zeta} and \eqref{eq:coABS} we obtain
\begin{equation}
\label{eq:ABK0}
|A\Delta \K_0|+|B\Delta \K_0| +|S\Delta \K_0| \leq C\,\zeta^{1/2n^2}.
\end{equation}\\

Finally, we claim that 
$$
A\subset (1+C\,\zeta^{1/2n^3})\K_0.
$$
Indeed, following the argument used in the proof of \cite[Lemma 13.3]{christ1}
(see also \cite[Proof of Theorem 1.2, Step 5]{fjA}),
let $x \in A\setminus \K_0$, denote by $x'\in \partial\K_0$ the closest point
in $\K_0$ to $x$, set $\rho:=|x-x'|=\dist(x,\K_0)$, and let $v \in \mathbb S^{n-1}$ be the unit normal to a supporting
hyperplane to $\K_0$ at $x'$, that is
$$
(z-x')\cdot v \leq 0 \qquad \forall\,z \in \K.
$$
Let us define $\K_\rho:=\left\{z \in \K_0:(z-x')\cdot v \geq -\frac{t}{1-t}\rho\right\}$.
Observe that, since $\K_0$ is a bounded convex set with volume 
close to $1$, $|\K_\rho| \geq c_n\tau^n \rho^n$ for some dimensional constant $c_n>0$.
Since $x \in A$ we have
$$
S=tA+(1-t)B \supset \bigl(tx+(1-t)[\K_\rho\cap B]\bigr) \cup (S\cap \K_0),
$$
and the two sets in the right hand side are disjoint.
This implies that (see \eqref{eq:ABK0})
$$
|S| \geq \tau^n \bigl(|\K_\rho| - |\K_0 \setminus S|\bigr) +|S\cap \K_0|
\geq \rho^n/C +|S| - C\,\zeta^{1/2n^2},
$$
from which we deduce 
$$
\rho\leq C\,\zeta^{1/2n^3}.
$$
Since $x$ is arbitrary, this implies that $A$ is contained inside the
$\left(C\,\zeta^{1/2n^3}\right)$-neighborhood of $\K_0$, proving the claim.\\

Since the analogous statement holds for $B$, we obtain that
$$
A\cup B \subset \K:=(1+C\,\zeta^{1/2n^3})\K_0,
$$
and (thanks to \eqref{eq:ABK0})
$$
|\K_0\setminus A|+|\K_0\setminus B| \leq C\,\zeta^{1/2n^3},
$$
as desired.

%end color green}

\end{document}